\urldef{\urluni}{\url}{https://math.rptu.de/ags/fuana}
\urldef{\emailfattler}{\url}{torben.fattler@math.rptu.de}
\urldef{\emailgrothaus}{\url}{grothaus@rptu.de}
\urldef{\emailsteil}{\url}{nathalie.steil@rptu.de}
\DeclareMathOperator{\R}{\mathbb{R}}
\DeclareMathOperator{\N}{\mathbb{N}}
\DeclareMathOperator{\Z}{\mathbb{Z}}
\newcommand{\myint}[4]{\int\limits_{#1}^{#4}#2\textup{d}#3}
\newcommand{\SetA}{A}
\newcommand{\ContDiff}[3]{\ensuremath{\mathcal{C}^{#2}_{\textup{#3}}(#1)}}
\newcommand{\cupdot}{\mathbin{\mathaccent\cdot\cup}}
\newcommand{\Erho}{\mathcal{E}^{\varrho}}
\newcommand{\ErhoOne}{\mathcal{E}^{\mathbbm{1}}}
\newcommand{\supp}[1]{\textup{supp}\ensuremath{\left(#1\right)}}
\newcommand{\eps}{\ensuremath{\varepsilon}}
\newcommand{\Lipschitz}{\ensuremath{U}}
\newcommand{\LipschitzComplement}{\ensuremath{\overline{U}^{\text{C}}}}
\newcommand{\vdimlower}{\ensuremath{v_{\textbf{.}d-1}}}
\newcommand{\Additive}{\ensuremath{C}}
\newcommand{\extension}[2]{\ensuremath{{#1}_{#2}}}
\newtheorem{thm}{Theorem}[section]
\newtheorem{prop}[thm]{Proposition}
\newtheorem{cor}[thm]{Corollary}
\newtheorem{lem}[thm]{Lemma}
\newtheorem{cond}[thm]{Condition}
\theoremstyle{definition}
\newtheorem{definition}[thm]{Definition}
\theoremstyle{remark} 
\newtheorem{rem}[thm]{Remark}
\newcommand{\ie}{i.e.\xspace~}
\newcommand{\almosteverywhere}{a.e.\xspace~}
\newcommand{\eg}{e.g.\xspace~}
\newcommand{\resp}{resp.\xspace~}
\newcommand{\withoutloss}{wlog\xspace~}
\newcommand{\Wlog}{Wlog\xspace~}
\newcommand{\withrespectto}{w.r.t.~}
\newcommand{\esssupp}[1]{\text{ess supp}(\ensuremath{#1})}
\newcommand{\qe}{q.e.\xspace~}	
\newcommand{\Rmeasurespace}[1]{(\mathbb{R}^#1, \mathscr{B}(\mathbb{R}^#1), \lambda^#1)}
\newcommand{\Prop}[2][]{\mathbb{P} \ifx\\#1\\\else_{#1}\fi\!\left( #2\right)}
\newcommand{\PropPalm}[2][]{\mathbb{P} \ifx\\#1\\\else^{#1}\fi\!\left( #2\right)}
\newcommand{\Borel}[1]{\mathscr{B}(#1)}
\newcommand{\mS}{\mathcal{S}}
\newcommand{\Markov}[1]{\ensuremath{\textbf{M}^{#1}}}
\newcommand{\Expec}[3]{\mathbb{E}_{#1}^{#2}\left[#3\right]}
\newcommand{\cE}[3][]{\mathbb{E}\ifx\\#1\\\else_{#1}\fi\!\left[#2 \big| {#3}\right]}
\newcommand{\Var}[2][]{\text{Var} \ifx\\#1\\\else_{#1}\fi\!\left(#2\right) }
\newcommand{\Cov}[2][]{\text{Cov} \ifx\\#1\\\else_{#1}\fi\!\left(#2\right) }
\newcommand{\murho}{\ensuremath{\varrho \mu}}
\newcommand{\measdens}[2]{\ensuremath{#2 #1}}
\newcommand{\eucscalar}[2]{\left(#1,#2\right)_{\textup{euc}}}
\newcommand{\eucnorm}[1]{\left\Vert\ensuremath{#1}\right\Vert_{\textup{euc}}}
\newcommand{\Ltwomurho}{\ensuremath{L^2\left(\mathbb{R}^d, \murho\right)}}
\newcommand{\LtwomurhoOneDim}{\ensuremath{L^2\left(\mathbb{R}, \murho\right)}}
\newcommand{\LtwomuOneDim}{\ensuremath{L^2\left(\mathbb{R}, \mu\right)}}
\newcommand{\Ltwomurhoscalar}[2]{\left( #1 , #2 \right)_{\Ltwomurho}}
\newcommand{\Lcaltwo}[2]{\mathscr{L}^2\left(#1, #2\right)}
\newcommand{\Lcaltwomurho}{\mathscr{L}^2\left(\mathbb{R}^d, \murho\right)}
\newcommand{\LcaltwomurhoOneDim}{\mathscr{L}^2\left(\mathbb{R}, \murho\right)}
\newcommand{\Lcaltwoloc}{\mathscr{L}^2_{\text{loc}}\left(\mathbb{R}^d, \lambda^d\right)}
\newcommand{\Ltwoloc}{L^2_{\text{loc}}\left(\mathbb{R}^d, \lambda^d\right)}
\newcommand{\abs}[1]{\left\lvert\ensuremath{#1}\right\rvert}
\newcommand{\norm}[1]{\left\Vert\ensuremath{#1}\right\Vert}
\newcommand{\Laplace}{\ensuremath{\Delta}}
\newcommand{\localSobolev}[2]{H^{#1}_{\textup{loc}}(#2)}
\newcommand{\Sobolev}[2]{H^{#1}(#2)}
\newcommand{\restrictfunc}[2]{\ensuremath{#1 \big| _{#2}}}
\newcommand{\Capac}[2]{\ensuremath{\text{Cap}_{#1}(#2)}}
\newcommand{\Smoothmeasures}{\ensuremath{\mathcal{Q}}}
\begin{document}
 \makeatletter
 \begin{titlepage}
    \begin{center}
    \textbf{\Large Construction of distorted Brownian motion with permeable sticky behaviour on sets with Lebesgue measure zero}\\
    \vspace{1 cm}
     Torben Fattler\footnote[1]{University of Kaiserslautern-Landau, 67663 Kaiserslautern, Germany.}\footnote[2]{\urluni}\footnote[3]{\emailfattler}, Martin Grothaus\footnotemark[1]\footnotemark[2]\footnote[4]{\emailgrothaus}, Nathalie Steil\footnotemark[1]\footnotemark[2]\footnote[6]{\emailsteil}
    \end{center}
    \vspace{1 cm}
    \textit{MSC: Primary: 60J46; Secondary: 60J65, 60J55, 60J60}\\
    \textit{Keywords: Dirichlet forms, permeable sticky distorted Brownian motion, ergodicity}\\
    \vspace{1 cm}
    
    {\underline{Abstract:} 
    The starting point is a gradient Dirichlet form with respect to $\measdens{\lambda^d}{\varrho}$ on the space  $L^2({\mathbb R}^d, \measdens{\mu}{\varrho})$. Here $\lambda^d$ is the Lebesgue measure on ${\mathbb R}^d$, $\varrho$ a strictly positive density and $\mu$ puts weight on a set $\SetA \subset {\mathbb R}^d$ with Lebesgue measure zero. We show that the Dirichlet form admits an associated stochastic process $X$. 
    We derive an explicit representation of the corresponding generator if $\SetA$ is a Lipschitz boundary. This representation together with the Fukushima decomposition identifies $X$ as a distorted Brownian motion with drift given by the logarithmic derivative of $\varrho$ in ${\mathbb R}^d \setminus A$. Furthermore, we prove $X$ to be irreducible and recurrent. Finally, via ergodicity we show positive séjour time of $X$ on $\SetA$. Hence we obtain a stochastic process $X$ with permeable sticky behaviour on $\SetA$.}
    \par
    {\underline{Acknowledgement:}} M.G.\xspace and N.S.\xspace thank Vitalii Konarovskyi for the stimulating discussion about the relation of the process we construct and the one constructed in \cite{EP2014}. N.S.\xspace also thanks Andreas Gathmann for his useful hints on topological properties of Lipschitz domains.
 \end{titlepage}
     \makeatother
    
	\section{Introduction}
In \cite{B2014} the author studies existence and uniqueness of a weak solution to the equation
\begin{align*}
	dX_t = \mathbbm{1}_{\{X_t\not=0\}}\, dB_t,\quad X_0=0,
\end{align*}
where $\big(B_t\big)_{t\ge 0}$ is a one-dimensional standard Brownian motion, which has the sum of the Lebesgue measure and the Dirac measure at zero as speed measure. At the same time, in \cite{EP2014} the authors proved for the system of equations 
\begin{align*}\left\{
	\begin{array}{ll}
		dX_t &= \mathbbm{1}_{\{X_t\not=0\}}\, dB_t\\
		\mathbbm{1}_{\{X_t=0\}}\,dt &= \frac{1}{\mu}\, d\ell_t^0(X)
	\end{array}\right.,
\end{align*}
where $\mu\in(0,+\infty)$, $X:=\big(X_t\big)_{t\ge 0}$ and $\ell_t^0(X)$ is the local time in the semimartingale sense at $0$ of $X$, existence and uniqueness of a weak solution and non-existence of a strong solution.

In the present paper we provide in dimension $d\in\mathbb{N}$ a weak solution to the system of equations
\begin{align}\left\{
	\begin{array}{ll}
		dX^i_t &= \mathbbm{1}_{\mathbb{R}^d\setminus A}\big(X_t\big)\sqrt{2} \, dB^i_t + \mathbbm{1}_{\mathbb{R}^d\setminus A}\big(X_t\big)\partial_{x_i}\ln(\varrho)(X_t)\,dt\\
		X^i_0 &= x_i
	\end{array}\right.\text{for}\quad i\in\{1,\ldots,d\},
\end{align}\label{eq: sde_sys}
for 
\begin{enumerate}[(i)]
	\item quasi every starting point $x=(x_1,\ldots,x_d)^T\in\mathbb{R}^d$ if $d \geq 2$ and the function $\varrho:\mathbb{R}^d\to \R$ is continuously differentiable, strictly positive and integrable with respect to the measure $\mu:=\lambda ^d+\mS$ on $\big(\R^d,\Borel{\R^d}\big)$,
	\item for all starting points $x \in \R$ if the function $\varrho$ fulfils the same Condition as in (a) or $\varrho$ is continuously differentiable, strictly positive and bounded
\end{enumerate}
with respect to an underlying Dirichlet form. Here $\big(B_t^i\big)_{t\ge 0}$ are independent one-dimensional standard Brownian motions for $i\in\{1,\ldots, d\}$, $\lambda^d$ denotes Lebesgue measure on the Borel-$\sigma$-algebra $\Borel{\R^d}$ of $\mathbb{R}^d$ and $\mS$ is a $\sigma$-finite measure on the trace $\sigma$-algebra $\Borel{\SetA}$ of $\Borel{\R^d}$ over $\SetA$, where $\SetA$ is a closed $\lambda ^d$ null set in $\Borel{\R^d}$, trivially extended to ${\R^d}$. Moreover, the constructed solution spends a positive amount of time in $\SetA$. I.e., we construct a distorted Brownian motion $\big(X_t\big)_{t\ge 0}:=\big((X_t^1,\ldots, X_t^d)\big)_{t\ge 0}$ in $\mathbb{R}^d\setminus \SetA$ with a sticky behaviour in a Lebesgue null set $A$ as a weak solution to the system \eqref{eq: sde_sys} of stochastic differential equations. 
The application of Dirichlet form techniques in the present setting is motivated by the studies in \cite{FGV2016}, where the authors used Dirichlet forms in order to construct reflected distorted Brownian motions with sticky boundary behaviour.

The stochastic processes we construct in this paper are associated with symmetric regular strongly local Dirichlet forms $\big(\Erho, D(\Erho)\big)$ which are the closure of symmetric bilinear forms
\begin{align*}
	\Erho\left(f,g\right) \vcentcolon = \myint{\R^d}{\eucscalar{\nabla f}{\nabla g}\,\varrho}{\lambda^d}{},\quad f,g\in D:=\ContDiff{\R^d}{\infty}{c}\subseteq \Ltwomurho,
\end{align*}
in $\Ltwomurho$.
Here, $\ContDiff{\R^d}{\infty}{c}$ denotes the space of infinitely often differentiable functions on $\mathbb{R}^d$ with compact support and $\eucscalar{\cdot}{\cdot}$ the euclidean scalar product on $\mathbb{R}^d$. 
For a specific choice of a Lebesgue null set $A$, we obtain an explicit representation of its generator. This reads, for functions $f$ that are continuous with compact support on $\R^d$ and twice continuously differentiable on $\R^d\setminus\SetA$ with bounded derivatives,
\begin{align*}
	L^{\varrho}f = \mathbbm{1}_{\R\setminus \SetA}\left(f'' + f' (\ln(\varrho))'\right) + \mathbbm{1}_{\SetA}\left(\extension{(f')}{r}-\extension{(f')}{l}\right)
\end{align*}
in dimension $d=1$ under Condition \ref{cond: d is one}, e.g.~$A:=\{0\}$ and
\begin{align*}
	L^{\varrho}f = \mathbbm{1}_{\R^d\setminus \partial U}\left(\Laplace f + \eucscalar{\nabla f}{\nabla \ln(\varrho)}\right) + \mathbbm{1}_{\partial U}\left(\extension{\left(\partial_{\nu_{\Lipschitz}}f\right)}{\LipschitzComplement}-\extension{\left(\partial_{\nu_{\Lipschitz}}f\right)}{\Lipschitz}\right)
\end{align*}
in dimension $d\ge 2$ under Condition \ref{cond: Lipschitz boundary} for $A:=\partial U$, where $U$ is open and bounded with Lipschitz boundary.

Having a strongly local regular Dirichlet form in hand, abstract theory provides the existence of a diffusion process $\mathbf{M}^\varrho$ with state space $\mathbb{R}^d$ associated to $\Erho$. Via a Fukushima decomposition of the constructed process $\mathbf{M}^{\varrho}$, we are able to show that this process is solving \eqref{eq: sde_sys} for all starting points if $d=1$ and for quasi every starting point if $d \geq 2$ with respect to the associated Dirichlet form $\Erho$ for a choice of $\SetA$ as in the derivation of the generator. In particular, we obtain for quasi all $x=(x_1,\ldots,x_d)^T\in\mathbb{R}^d$ if $d \geq 2$ and for all $x \in \R$ if $d=1$, the following representation for the coordinate process $X:=\big(X^1_t, \ldots, X_t^d\big)_{t\ge 0}$ of the diffusion process $\mathbf{M}^{\varrho}$:
\begin{align*}
	X_t^i = X_0^i + \sqrt{2} \myint{0}{\mathbbm{1}_{\R^d\setminus\SetA}(X_s)\,}{B^i_s}{t} + \myint{0}{\mathbbm{1}_{\R^d\setminus\SetA}(X_s)\partial_{x_i}(\ln \varrho(X_s))\,}{s}{t}, \quad X_0^i=x_i
\end{align*}
for all $t \geq 0$ and for all $1 \leq i\leq d$.
That the behaviour of the constructed process in $A$ indeed is sticky, we conclude by proving ergodicity of $\mathbf{M}^\varrho$. A key ingredient here is, that the underlying Dirichlet form turns out to be irreducible. Therefore, we are able to show that the séjour time on $A$ is positive. On the other hand, this implies along with the recurrence property of the underlying Dirichlet form that $\SetA$ is permeable for the process.

Our paper is organized as follows.
In Section $2$, we provide the basic setting and notations. In Section $3$, we introduce the bilinear form \eqref{eq: Bilinearform} and prove in Proposition \ref{prop: Diri from regular and strongly local} that its closure $(\Erho, D(\Erho))$ is a regular and strongly local Dirichlet form. If the density $\varrho$ is integrable \withrespectto $\lambda^d$ or $d=1$ and $\varrho$ is bounded, Proposition \ref{prop: E is recurrent} shows that  $(\Erho, D(\Erho))$ is recurrent and hence, conservative.

In Section $4$, for special choices of $\SetA$, we give an explicit representation of the corresponding generator for functions that are continuous with compact support on $\R^d$, twice continuously differentiable on $\R^d\setminus\SetA$ with bounded derivatives, see Theorems \ref{thm: Calculation Generator d is one} in the one-dimensional case and \ref{thm: Calculation Generator d greaterequal two} for $d \geq 2$.

In Section $5$, we show in Theorem \ref{thm: Existence of processs} that $(\Erho, D(\Erho))$ admits an associated diffusion process, \ie a strong Markov process with continuous sample path and, if $(\Erho, D(\Erho))$ is conservative, with infinite lifetime. 
In the Theorems \ref{thm: Fuk decomp in 1d} for $d=1$ and \ref{thm: Fukushima decomp} for $d \geq 2$, we provide the Fukushima decomposition of the process. This is a key ingredient to show that the process solves  the stochastic differential equation in \eqref{eq: sde_sys} and identifies the process as a distorted Brownian motion, see Theorems \ref{thm: Process is Bm} for $d \geq 2$ and \ref{thm: Process is Bm in 1d} for $d=1$. Finally, we prove that $(\Erho, D(\Erho))$ is irreducible under the Condition that $\murho$ is finite, see Proposition \ref{prop: E is irreducible}. Together with recurrence, this implies that the process is ergodic, see Theorem \ref{thm: process is ergodic}. As direct consequences, the séjour time of the process on $\SetA$ is positive, see Corollary \ref{cor: positive sejour time}, and the set $\SetA$ is permeable for the process, see Corollary \ref{cor: permeable}.
In Remark \ref{rem: Bass process compared to our process}, we show that in the case $d = 1$, $\varrho = \mathbbm{1}$ and $A =\{0\}$ the process we construct coincides with the one constructed in \cite{B2014}.
Furthermore, in the latter reference, see \cite{B2014}, Remark 5.3 (p.12), there is a sketch of a proof identifying the above process with the process constructed in \cite{EP2014}. In particular, this shows that our process coincides with a process constructed by a time change of a Brownian motion.

The following list of main results summarises the progress achieved in this paper:
\begin{enumerate}[(i)]
	\item We construct a diffusion process with infinite lifetime and sticky behaviour on a specified set $\SetA$.
	\item We explicitly derive a representation of its generator.
	\item Using the structure of the generator and the Fukushima decomposition we show that the constructed process solves the underlying stochastic differential equation for quasi all starting points if $d \geq 2$ and for all starting points if $d=1$.
	\item We prove that the set $\SetA$ is permeable and, via an ergodicity result, we show that the constructed process has sticky behaviour on $\SetA$.
\end{enumerate}
	\section{Preliminaries and Notation}
In the whole paper, we endow $\R^d$ with its standard topology and subsets $B \subseteq \R^d$ with the corresponding trace topology. We denote the euclidean norm on $\R^d$ by $\eucnorm{\cdot}$, the uniform norm on $\R^d$ by $\norm{\cdot}_{\text{sup}}$ and the absolute value on $\R$ by $\abs{\cdot}$. For any set $B$ in the Borel $\sigma$-algebra on $\R^d$ denoted by $\Borel{\R^d}$, the indicator function of $B$ is defined by $\mathbbm{1}_B \vcentcolon \R^d \to \R,\ x \mapsto \begin{cases} 1, &\text{if }x \in B\\ 0, &\text{if }x \notin B \end{cases}$.\\ 
For a function $f$ defined on $B$, we denote by $\mathbbm{1}_Bf$ the trivial extension of $f$ to $\R^d$.  If $f$ is defined on $\R^d$, then $\mathbbm{1}_Bf$ denotes the function that is equal to $f$ on $B$ and zero on $\R^d\setminus B$. For an arbitrary subset $F \subseteq \R^d$ we denote the Borel $\sigma$-algebra on $F$ by $\Borel{F}$. 

For a fixed $d \in \N$ we consider the measure space $\Rmeasurespace{d}$, where $\lambda^d$ denotes Lebesgue measure.
Further, for a closed $\lambda^d$ null set $\SetA$, we consider $\left(\SetA, \Borel{\SetA}, \mS\right)$, where $\mS$ denotes a $\sigma$--finite measure on $\Borel{\SetA}$.  We set $\mu \vcentcolon \Borel{\R^d} \to [0, \infty], F  \mapsto  \lambda^d(F)+ \mS(\SetA \cap F).$
For some continuously differentiable $\varrho \vcentcolon \R^d \to \R$ with $\varrho>0$ we define the measure
$\murho$ on $(\R^d, \Borel{\R^d})$ by $\murho(F) = \myint{\R^d}{\mathbbm{1}_F\, \varrho}{\mu}{}$
and obtain the real Hilbert space $\Ltwomurho$. We always use this notation for measures with density, \eg $\measdens{\lambda^d}{\varrho}$.

Let $\mathscr{F}(\R^d)$ be a function set that embeds into $\Ltwomurho$. We denote the corresponding embedded set
by $\mathscr{F}(\R^d) \subseteq \Ltwomurho$ and for any $f \in \mathscr{F}(\R^d) \subseteq \Ltwomurho$, we always choose the $\murho$-version in $\mathscr{F}(\R^d)$.
For the particular choice $ \mathscr{F}(\R^d) \vcentcolon =\ContDiff{\R^d}{\infty}{c}$, we set $D\vcentcolon= \ContDiff{\R^d}{\infty}{c} \subseteq \Ltwomurho$. Here $\ContDiff{\R^d}{\infty}{c}$ denotes the space of infinitely often differentiable functions on $\R^d$ with compact support. For $m \in \N$, we denote the space of $m$ times continuously differentiable functions on $\R^d$ (with compact support) by $\ContDiff{\R^d}{m}{(c)}$.

\begin{rem}\label{rem: Cinfcomp is dense in Ltwomurho}
	Since $\R^d$ is a locally compact space and countable at infinity, \cite{BauerEnglisch}, Corollary 7.5.5 (p.220) states that  $\murho$ is regular and that $\ContDiff{\R^d}{0}{c}\subseteq \Ltwomurho$ is dense in $\Ltwomurho$. In particular, $\murho$ is a Radon measure.
	Furthermore, because $\murho(K) < \infty$ for all $K \subseteq \R^d$ compact, $\murho$ is also a Baire measure and therefore, $D$ is dense in $\Ltwomurho$.
\end{rem}
	\section{The Dirchlet form \texorpdfstring{$(\Erho, D(\Erho))$}{Erho}}
We define the bilinear form $(\Erho, D)$ via 
\begin{align}\label{eq: Bilinearform}
	\Erho \vcentcolon D \times D \to \R, 
	(f,g) \mapsto \Erho\left(f,g\right) \vcentcolon = \myint{\R^d}{\eucscalar{\nabla f}{\nabla g}\,\varrho}{\lambda^d}{}
	= \sum_{i=1}^d \myint{\R^d}{\partial_{x_i}f\partial_{x_i}g\, \varrho}{\lambda^d}{},
\end{align}
where  $\nabla f$ and $\nabla g$ denote the gradient of $f$ and $g$, respectively, $\partial_{x_i}f$ and $\partial_{x_i}g$ denote the $i$-th partial derivative of $f$ and $g$ for $i\in\{1, \ldots, d\}$, respectively, and $\eucscalar{\cdot}{\cdot}$ denotes the euclidean scalar product on $\R^d$.
\begin{rem}
	The pair $(\Erho, D)$ is a symmetric, non--negative definite bilinear form and by Remark \ref{rem: Cinfcomp is dense in Ltwomurho} densely defined on $\Ltwomurho$.
\end{rem}
\begin{prop}
	The bilinear form $(\Erho, D)$ is closable on $\Ltwomurho$. We denote its closure by $(\Erho, D(\Erho))$.
\end{prop}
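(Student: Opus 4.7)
The plan is to run the standard closability argument for symmetric bilinear forms, exploiting the fact that the form $(\mathcal{E}^\varrho,D)$ only "sees" the Lebesgue part of $\mu$ in its integrand, while convergence in $L^2(\R^d,\varrho\mu)$ is a fortiori strong enough because $\varrho\lambda^d\le\varrho\mu$.

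Concretely, I would take a sequence $(f_n)\subseteq D$ with $f_n\to 0$ in $\Ltwomurho$ and $\mathcal{E}^\varrho(f_n-f_m,f_n-f_m)\to 0$ as $n,m\to\infty$, and aim to show $\mathcal{E}^\varrho(f_n,f_n)\to 0$. Since $\varrho\mu\ge\varrho\lambda^d$, the same sequence satisfies $f_n\to 0$ in $L^2(\R^d,\varrho\lambda^d)$. For any fixed $g\in D$, integration by parts (legitimate because $f_n,g\in C^\infty_c(\R^d)$ and $\varrho\in C^1$ with $\varrho>0$) gives
\[
\mathcal{E}^\varrho(f_n,g)=\int_{\R^d}\eucscalar{\nabla f_n}{\nabla g}\,\varrho\,d\lambda^d=-\int_{\R^d}f_n\bigl(\Laplace g+\eucscalar{\nabla g}{\nabla\ln\varrho}\bigr)\varrho\,d\lambda^d.
\]
The function $h_g\defL\Laplace g+\eucscalar{\nabla g}{\nabla\ln\varrho}$ is continuous with compact support, hence belongs to $L^2(\R^d,\varrho\lambda^d)$ because $\varrho$ is continuous and compact sets have finite $\lambda^d$-measure. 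Cauchy--Schwarz in $L^2(\R^d,\varrho\lambda^d)$ then yields $\mathcal{E}^\varrho(f_n,g)\to 0$ as $n\to\infty$ for every $g\in D$.

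From here the conclusion is the textbook trick: given $\varepsilon>0$, fix $m$ so large that $\mathcal{E}^\varrho(f_n-f_m,f_n-f_m)<\varepsilon$ for all $n\ge m$, and decompose
\[
\mathcal{E}^\varrho(f_n,f_n)=\mathcal{E}^\varrho(f_n-f_m,f_n)+\mathcal{E}^\varrho(f_m,f_n).
\]
The first term is at most $\sqrt{\varepsilon}\,\mathcal{E}^\varrho(f_n,f_n)^{1/2}$ by Cauchy--Schwarz for $\mathcal{E}^\varrho$, and the second tends to $0$ as $n\to\infty$ by the integration-by-parts step applied with $g=f_m$. Since $\mathcal{E}^\varrho(f_n,f_n)^{1/2}$ is a Cauchy sequence in $\R$, its limit $\alpha$ satisfies $\alpha^2\le\sqrt{\varepsilon}\,\alpha$, hence $\alpha\le\sqrt{\varepsilon}$ for arbitrary $\varepsilon>0$, so $\alpha=0$.

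The only point requiring any care is the integration-by-parts / $L^2$-bound step: one must check that the measure against which $f_n$ is tested, namely $h_g\varrho\,d\lambda^d$ restricted to the compact support of $g$, is dominated by a multiple of $\varrho\,d\mu$ on a set of finite $\varrho\mu$-measure. This is automatic from $\lambda^d\le\mu$, continuity of $\varrho$ and $\nabla\varrho$, and the local finiteness of $\mu$ on compacts noted in Remark~\ref{rem: Cinfcomp is dense in Ltwomurho}; no deeper structural input is needed. Closedness of $(\mathcal{E}^\varrho,D(\mathcal{E}^\varrho))$ as the closure is then standard.
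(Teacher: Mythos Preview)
Your argument is correct and rests on the same integration-by-parts identity the paper uses; the paper simply packages it by defining the operator $Lf=\mathbbm{1}_{\R^d\setminus A}(\Laplace f+\eucscalar{\nabla f}{\nabla\ln\varrho})$ on $D$, verifying $\Erho(f,g)=(-Lf,g)_{\Ltwomurho}$, and then invoking \cite{MaRoeckner}, Proposition I.3.3, which is exactly the abstract form of your ``textbook trick''. The only cosmetic difference is that the paper keeps the inner product in $\Ltwomurho$ by inserting the indicator $\mathbbm{1}_{\R^d\setminus A}$, whereas you pass to $L^2(\R^d,\measdens{\lambda^d}{\varrho})$ via the domination $\measdens{\lambda^d}{\varrho}\le\murho$; both routes are valid.
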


\begin{proof}
	We define a linear operator by 
	\begin{align*}
		L \vcentcolon D \to \Ltwomurho, 
		f \mapsto Lf \vcentcolon= \mathbbm{1}_{\R^d\setminus \SetA}\left(\Laplace f+\eucscalar{\nabla f}{\nabla \ln \varrho }\right),
	\end{align*}
	where $\Laplace f$ is the Laplace operator applied to $f$.
	Using integration by parts we have for $f,g \in D$
	\begin{align*}
		&\Ltwomurhoscalar{-Lf}{g}
		= \myint{\R^d \setminus \SetA}{-\left(\Laplace f+\eucscalar{\nabla f}{\nabla \ln \varrho }\right)g\,\varrho}{\lambda^d}{}
		=  \myint{\R^d}{\eucscalar{\nabla f}{\nabla g}\,\varrho}{\lambda^d}{}\\+&\myint{\R^d}{\eucscalar{\nabla f}{\nabla \varrho}g\,}{\lambda^d}{}
		-  \myint{\R^d}{\eucscalar{\nabla f}{\nabla \varrho}g\,}{\lambda^d}{}=
		\myint{\R^d}{\eucscalar{\nabla f}{\nabla g} \,\varrho}{\lambda^d}{}=
		\Erho\left(f,g\right).
	\end{align*}
	Since the bilinear form $\left(\Erho, D\right)$ is symmetric and non--negative definite,  $(L,D)$ is symmetric and non--positive definite and  \cite{MaRoeckner},  Proposition I. 3.3 (p.29) yields the claim.
\end{proof}
For later reference, we state:
\begin{equation}\label{defn: Erho}
	\parbox{0.8\textwidth}{$(\Erho, D(\Erho))$ is the closure of the bilinear form given in \eqref{eq: Bilinearform}, where $D = \ContDiff{\R^d}{\infty}{c} \subseteq \Ltwomurho$. The measure $\murho$ has the density $\varrho>0$ which is continuously differentiable on $\R^d$ and $\mu=\lambda^d+\mS$, where $\lambda^d$ denotes Lebesgue measure and $\mS$ is a $\sigma$-finite measure on a closed Lebesgue nullset $A \in \Borel{\R^d}$. }
\end{equation}

In the following, for $O \subseteq \R^d$ open, $m \in \N$ and $p \in [1, \infty]$, we denote by $\text{H}^{m,p}_{\text{(loc)}}(O)$ the (local) Sobolev space of $m$ times weakly differentiable and (locally)  $p$ integrable functions \withrespectto $\lambda^d$ with (locally) $p$ integrable derivatives  \withrespectto $\lambda^d$ up to order $m$ on $O$.

\begin{prop}\label{prop: Domain is subset of Sob Space}\hfill
	For $f \in D(\Erho)$, its equivalence class $f \in \Ltwoloc$ is an element of the local Sobolev space $H^{1,2}_{\text{loc}}(\R^d)$. The weak derivatives are in $L^2(\R^d, \measdens{\lambda^d}{\varrho})$. 
\end{prop}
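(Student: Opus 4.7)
The plan is to unpack the definition of the closure $(\Erho, D(\Erho))$ and transfer the Cauchy/convergence properties from the $\measdens{\mu}{\varrho}$-picture into a purely $\lambda^d$-picture via a local comparison of measures. The key observation is that since $\varrho$ is continuous and strictly positive, for every compact $K \subseteq \R^d$ there is a constant $c_K > 0$ with $\varrho \geq c_K$ on $K$, so $\murho\big|_K \geq \varrho \lambda^d\big|_K \geq c_K \lambda^d\big|_K$. This gives a continuous embedding $L^2(K,\murho) \hookrightarrow L^2(K,\lambda^d)$, and in particular every $\murho$-null set is a $\lambda^d$-null set; thus the $\murho$-equivalence class $f$ canonically defines an element of $\Ltwoloc$.

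Concretely, given $f \in D(\Erho)$, I would pick a sequence $(f_n)_{n \in \N}$ in $D = \ContDiff{\R^d}{\infty}{c}$ which is $\Erho_1$-Cauchy with $f_n \to f$ in $\Ltwomurho$. By the embedding above, $f_n \to f$ in $L^2(K,\lambda^d)$ for every compact $K \subseteq \R^d$. Simultaneously, for each $i \in \{1,\dots,d\}$, the sequence of classical derivatives $\partial_{x_i} f_n$ is Cauchy in $L^2(\R^d, \measdens{\lambda^d}{\varrho})$ by the very definition of $\Erho$, so it converges to some $g_i \in L^2(\R^d, \measdens{\lambda^d}{\varrho})$; again by the local comparison, $\partial_{x_i} f_n \to g_i$ in $L^2(K,\lambda^d)$ for every compact $K$.

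The weak-derivative identification follows from passing to the limit in integration by parts: for any test function $\varphi \in \ContDiff{\R^d}{\infty}{c}$ with support contained in a compact set $K$,
\begin{align*}
\myint{\R^d}{f_n\, \partial_{x_i}\varphi}{\lambda^d}{} = -\myint{\R^d}{\partial_{x_i}f_n\, \varphi}{\lambda^d}{},
\end{align*}
and both sides converge (the left via $L^2(K,\lambda^d)$-convergence of $f_n$ paired with the bounded $\partial_{x_i}\varphi$, the right analogously using $\partial_{x_i}f_n \to g_i$ in $L^2(K,\lambda^d)$). Hence
\begin{align*}
\myint{\R^d}{f\, \partial_{x_i}\varphi}{\lambda^d}{} = -\myint{\R^d}{g_i\, \varphi}{\lambda^d}{},
\end{align*}
which identifies $g_i$ as the $i$-th weak partial derivative of $f$ on $\R^d$. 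Since $f \in L^2(K,\lambda^d)$ and $g_i \in L^2(K,\lambda^d)$ for every compact $K$, we conclude $f \in \localSobolev{1,2}{\R^d}$ with $\partial_{x_i}f = g_i \in L^2(\R^d, \measdens{\lambda^d}{\varrho})$, as claimed.

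The only real subtlety, which I expect to be the main point worth spelling out carefully, is the passage between the different equivalence-class conventions: one must verify that the $\murho$-representative $f$ has an unambiguous $\lambda^d$-equivalence class (this uses $\varrho > 0$, so that $\measdens{\lambda^d}{\varrho}$-null and $\lambda^d$-null coincide, and $A$ being a $\lambda^d$-null set, so that the contribution of $\mS$ plays no role). Everything else is a routine completeness/integration-by-parts argument.
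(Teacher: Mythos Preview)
Your proposal is correct and follows essentially the same approach as the paper: both arguments take an $\Erho_1$-approximating sequence $(f_n)_{n\in\N}\subseteq D$, use the local comparison $\varrho\geq c_K>0$ on compacts to transfer convergence from $\Ltwomurho$ (resp.\ $L^2(\R^d,\measdens{\lambda^d}{\varrho})$) to $L^2(K,\lambda^d)$, identify the $L^2(\R^d,\measdens{\lambda^d}{\varrho})$-limit of $\partial_{x_i}f_n$, and pass to the limit in the integration-by-parts identity against compactly supported test functions. Your discussion of the equivalence-class passage is a bit more explicit than the paper's, but there is no genuine difference in strategy.
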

\begin{proof}
	We note that 
	\begin{equation*}
		\Lcaltwomurho \subseteq \Lcaltwo{\R^d}{\measdens{\lambda^d}{\varrho}} \subseteq \Lcaltwoloc,
	\end{equation*}
	where for a measure $\nu$ and $p \in [1, \infty]$, $\mathscr{L}^p\left(\R^d, \nu\right)$ denotes the set of pointwise defined $p$ integrable functions \withrespectto $\nu$. We distinguish this space from $L^p(\R^d, \nu)$.
	In particular, we have the inequalities
	\begin{align}\label{eq: Norm L2lambdarho lesseq L2murho}
		\norm{f}_{L^2\left(\R^d, \measdens{\lambda^d}{\varrho}\right)}\leq \norm{f}_{\Ltwomurho}
	\end{align}
	for $f \in \Lcaltwomurho$ and
	\begin{align}\label{eq: Norm L2loc lesseq L2lambdarho}
		\norm{\mathbbm{1}_{K}f}_{L^2(\R^d, \lambda^d)}\leq C \norm{f}_{L^2(\R^d, \measdens{\lambda^d}{\varrho})}
	\end{align}
	for $f \in \Lcaltwo{\R^d}{\measdens{\lambda^d}{\varrho}}$ and $K \subseteq \R^d$ compact with a constant $C \in \R$ depending on $\varrho$ and $K$ since $\varrho$ is continuous and strictly positive.
	Furthermore, the mappings $\Ltwomurho \to \Ltwoloc, f \mapsto f$ and $L^2(\R^d, \measdens{\lambda^d}{\varrho}) \to \Ltwoloc, f \mapsto f$ are well defined.\\
	Now let $f \in D(\Erho)$ and $(f_n)_{n \in \N}$ a sequence in $D$ that converges to $f$ \withrespectto $(\Erho_1)^{\frac{1}{2}}$. We fix $i \in \{1, \ldots, d\}$. With the definition of $\Erho$, we obtain that $(\partial_{x_i} f_n)_{n \in \N}$ forms a Cauchy sequence in $L^2(\R^d, \measdens{\lambda^d}{\varrho})$.
	Consequently, the sequence has a limit $f^{(i)} \in L^2(\R^d, \measdens{\lambda^d}{\varrho})$.  
	Let $\varphi \in \ContDiff{\R^d}{\infty}{c}$ and $K \subseteq \R^d$ compact such that supp$(\varphi) \subseteq K$. With the Hölder inequality and the estimates \eqref{eq: Norm L2loc lesseq L2lambdarho} and \eqref{eq: Norm L2lambdarho lesseq L2murho} we compute
	\begin{align}\label{eq: conv part one proof loc}
		&\abs{~\myint{\R^d}{f \partial_{x_i}\varphi\,}{\lambda^d}{}- \myint{\R^d}{f_n \partial_{x_i}\varphi\,}{\lambda^d}{}{}} 
		\leq \norm{\mathbbm{1}_K(f-f_n)}_{L^2(\R^d, \lambda^d)} \norm{\partial_{x_i}\varphi}_{L^2(\R^d, \lambda^d)} \nonumber\\
		&
		\leq  C \norm{f-f_n}_{\Ltwomurho} \norm{\partial_{x_i}\varphi}_{L^2(\R^d, \lambda^d)} \to 0
	\end{align}
	for $n \to \infty$ where $C \in \R$ is a constant depending on $K$ and $\varrho$.
	
	Moreover, with Equation \eqref{eq: Norm L2loc lesseq L2lambdarho} and the Hölder inequality we have
	\begin{align}\label{eq: conv part two proof loc}
		&\abs{~\myint{\R^d}{\partial_{x_i} f_n \varphi\,}{\lambda^d}{}- \myint{\R^d}{f^{(i)} \varphi\,}{\lambda^d}{}{}} 
		\leq \norm{\mathbbm{1}_K\left(\partial_{x_i} f_n-f^{(i)}\right)}_{L^2(\R^d, \lambda^d)} \norm{\varphi}_{L^2(\R^d, \lambda^d)}\nonumber\\&
		\leq C \norm{\partial_{x_i} f_n - f^{(i)}}_{L^2(\R^d, \measdens{\lambda^d}{\varrho})} \norm{\varphi}_{L^2(\R^d, \lambda^d)} \to 0
	\end{align}
	for $n \to \infty$ since $f^{(i)}$ is the limit of $(\partial_{x_i} f_n)_{n \in \N}$ in $L^2(\R^d, \measdens{\lambda^d}{\varrho})$.
	
	As  all $f_n$ are elements of $\Sobolev{{1,2}}{\R^d}$, Equation \eqref{eq: conv part one proof loc} and \eqref{eq: conv part two proof loc} yield
	\begin{align*}
		\myint{\R^d}{f \partial_{x_i}\varphi\,}{\lambda^d}{}{} 
		= \lim_{n \to \infty} \myint{\R^d}{f_n \partial_{x_i}\varphi\,}{\lambda^d}{}{}
		= \lim_{n \to \infty} (-1) \myint{\R^d}{\partial_{x_i} f_n \varphi\,}{\lambda^d}{}{}
		=(-1) \myint{\R^d}{f^{(i)} \varphi\,}{\lambda^d}{}{}
	\end{align*}
	and we conclude that $f \in \Ltwoloc$ is an element of $H^{1,2}_{\text{loc}}(\R^d)$ with weak derivative $f^{(i)} \in \Ltwoloc$.
\end{proof}

\begin{rem}
	Proposition \ref{prop: Domain is subset of Sob Space} yields the representation $\Erho(f,g) = \sum_{i=1}^d\myint{\R^d}{f^{(i)}g^{(i)}\,\varrho}{\lambda^d}{}$ for $f,g \in D(\Erho)$, where $f^{(i)}, g^{(i)}$ denote the weak derivatives of $f, g \in \localSobolev{1,2}{\R^d}$, respectively, for $i=1,\ldots d$.
\end{rem}

\begin{cond}\label{cond: d is one}
	Let $d=1$ and $(x_n)_{n \in \N}$ be a sequence in $\R$ with the property that for all $x \in \R$ there exists $\eps_x>0$ such that $\abs{B_{\eps_x}(x) \cap \{x_n \vcentcolon n \in \N\} }< \infty,$
	where $B_{\eps_x}(x) \vcentcolon = \{y \in \R \vcentcolon \abs{y-x}<\eps_x\}$.
	Define $\SetA \vcentcolon = \{x_n \vcentcolon n \in \N\}$ and $\mS(F)= \sum_{n \in \N}\delta_{x_n}(F)$ as sum of the Dirac measures in $x_n$ for all $F \in \Borel{\R}$.
\end{cond}

Note that under Condition \ref{cond: d is one}, $\SetA$ is closed, has $\lambda$-measure zero and $\mS$ is $\sigma$-finite.

\begin{prop}\label{prop: domain if $d=1$}
	Assume that Condition \ref{cond: d is one} holds true and $\varrho = \mathbbm{1}$ as constant one function.
	Then
	\begin{enumerate}[(i)]
		\item for $f \in D(\ErhoOne)$, the equivalence class of $f \in L^2(\R, \lambda)$ is an element of $H^{1,2}(\R)$ and
		\item if $\abs{\SetA}<\infty$ the unique continuous representative of $f$ in $H^{1,2}(\R)$ is an element of $\mathscr{L}^2(\R, \mu)$ with equivalence class in $D(\ErhoOne)$.
	\end{enumerate}
\end{prop}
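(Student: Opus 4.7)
Part (i) is immediate from Proposition \ref{prop: Domain is subset of Sob Space} combined with the elementary estimate $\norm{f}_{L^2(\R,\lambda)} \leq \norm{f}_{\LtwomurhoOneDim}$, which holds because $\mu \geq \lambda$ and $\varrho = \mathbbm{1}$. The cited proposition already yields $[f]_\lambda \in \localSobolev{1,2}{\R}$ with weak derivative in $L^2(\R,\measdens{\lambda}{\varrho}) = L^2(\R,\lambda)$, and the above estimate supplies the missing global $L^2(\R,\lambda)$-integrability of $f$ itself, upgrading the statement to $[f]_\lambda \in \Sobolev{1,2}{\R}$.

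For part (ii), the central tool is the continuous one-dimensional Sobolev embedding $\Sobolev{1,2}{\R} \hookrightarrow C_b(\R)$, which follows from the pointwise identity $\tilde f(x)^2 = 2\int_{-\infty}^x \tilde f(t)\tilde f'(t)\,\mathrm{d}t$ together with Cauchy--Schwarz and Young, yielding $\norm{\tilde f}_{\mathrm{sup}} \leq \norm{\tilde f}_{\Sobolev{1,2}{\R}}$. This provides the unique bounded continuous representative $\tilde f$ of $[f]_\lambda$ and guarantees that $\Sobolev{1,2}{\R}$-convergence implies uniform convergence on $\R$. Since $\abs{\SetA} < \infty$, the atomic contribution satisfies
\begin{align*}
\sum_{x \in \SetA}\tilde f(x)^2 \leq \abs{\SetA}\,\norm{\tilde f}_{\mathrm{sup}}^2 < \infty,
\end{align*}
which combined with $\tilde f \in L^2(\R,\lambda)$ gives $\tilde f \in \LcaltwomurhoOneDim$. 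To place $[\tilde f]_\mu$ in $D(\ErhoOne)$, I would pick a sequence $(f_n)_{n \in \N} \subseteq \ContDiff{\R}{\infty}{c}$ with $f_n \to \tilde f$ in $\Sobolev{1,2}{\R}$ via the standard density result. Uniform convergence $\norm{f_n - \tilde f}_{\mathrm{sup}} \to 0$ then yields
\begin{align*}
\norm{f_n - \tilde f}_{\LtwomurhoOneDim}^2
&= \norm{f_n - \tilde f}_{L^2(\R,\lambda)}^2 + \sum_{x \in \SetA}\abs{f_n(x) - \tilde f(x)}^2 \\
&\leq \norm{f_n - \tilde f}_{L^2(\R,\lambda)}^2 + \abs{\SetA}\,\norm{f_n - \tilde f}_{\mathrm{sup}}^2 \to 0,
\end{align*}
while $\ErhoOne(f_n - \tilde f, f_n - \tilde f) = \norm{f_n' - \tilde f'}_{L^2(\R,\lambda)}^2 \to 0$ directly from the $\Sobolev{1,2}{\R}$-convergence. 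Hence $(f_n)_{n \in \N}$ is $(\ErhoOne_1)^{\frac{1}{2}}$-Cauchy with limit $\tilde f$, placing $[\tilde f]_\mu$ in $D(\ErhoOne)$.

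The decisive role of $\abs{\SetA} < \infty$ appears twice: in bounding the atomic sum $\sum_{x \in \SetA}\tilde f(x)^2$ by $\abs{\SetA}\,\norm{\tilde f}_{\mathrm{sup}}^2$, and in converting the uniform convergence of the smooth approximants into $L^2(\R,\mS)$-convergence at the atoms of $\mS$. This is the main obstacle the finiteness assumption elegantly bypasses; the remainder of the argument rests on standard Sobolev-space machinery in one dimension.
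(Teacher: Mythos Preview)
Your argument is correct and follows essentially the same route as the paper's proof: part (i) via Proposition~\ref{prop: Domain is subset of Sob Space} plus the trivial bound $\norm{f}_{L^2(\R,\lambda)}\le\norm{f}_{\LtwomurhoOneDim}$, and part (ii) via the Sobolev embedding $\Sobolev{1,2}{\R}\hookrightarrow C_b(\R)$, density of $\ContDiff{\R}{\infty}{c}$ in $\Sobolev{1,2}{\R}$, and the finiteness of $\SetA$ to control the atomic part. One small notational quibble: writing $\ErhoOne(f_n-\tilde f,f_n-\tilde f)$ presupposes $\tilde f\in D(\ErhoOne)$, which is what you are proving; the clean phrasing (as in the paper) is to show $(f_n)$ is $\ErhoOne$-Cauchy and converges to $\tilde f$ in $\LtwomurhoOneDim$, then invoke the definition of the closure.
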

\begin{proof}\hfill
	\begin{enumerate}[(i)]
		\item This part follows from Proposition \ref{prop: Domain is subset of Sob Space} and its proof with $\varrho$ as constant one function.
		\item Let $f \in H^{1,2}(\R)$. By a Sobolev embedding theorem ,see e.g.~\cite{AdamsSobSpaces}, Lemma 5.15 (p.107), there exists a continuous and bounded representative of $f$ denoted by $\widetilde{f}$ such that
		\begin{align*}
			\norm{\widetilde{f}}_{\text{sup}} \leq C \norm{f}_{H^{1,2}(\R)}
		\end{align*}
		for some $0<C<\infty$ independent of $f$. In particular, $\norm{\widetilde{f}}_{\LtwomuOneDim}< \infty$.
		By density, there is a sequence $(f_n)_{n \in \N}$ in $\ContDiff{\R}{\infty}{c} \subseteq H^{1,2}(\R)$ with $\norm{f_n-\widetilde{f}}_{H^{1,2}(\R)} \to 0$ as $n \to \infty$.
		This directly implies $\ErhoOne\left(f_n-f_m, f_n-f_m\right)\to 0$ as $n,m \to \infty$ and $\norm{f_n-\widetilde{f}}_{L^2(\R, \lambda)}\to0$ as $n \to \infty$.
		For any $x_k \in \SetA$, we have
		\begin{align*}
			\abs{f_n(x_k)-\widetilde{f}(x_k)} \leq \norm{f_n-\widetilde{f}}_{\text{sup}} \leq C \norm{f_n-f}_{H^{1,2}(\R)}
		\end{align*}
		because $f_n-\widetilde{f}$ is the continuous representative of $f_n-f \in H^{1,2}(\R)$  for all $n \in \N$.
		Therefore
		\begin{align*}
			\myint{\SetA}{(f_n(x)-\widetilde{f}(x))^2\,}{\mS}{}=\sum_{k=1}^{\abs{\SetA}}(f_n(x_k)-\widetilde{f}(x_k))^2 \to 0
		\end{align*}
		as $n \to \infty$. Thus, the equivalence class of $\widetilde{f}$ \withrespectto $\mu$ is an element of $D(\ErhoOne)$. 
	\end{enumerate}
\end{proof}

\begin{rem}
	\begin{enumerate}[(i)]
		\item 	We note that \ref{prop: domain if $d=1$}(b) does not hold in general if $\abs{\SetA}=\infty$. We choose $x_n \vcentcolon= \sqrt{\ln(\sqrt{n})}$ for $n \in \N$ and $f \in \Sobolev{{1,2}}{\R}$ with continuous representative $\widetilde{f}(x) \vcentcolon = \exp(-x^2)$ for $x \in \R$. Then
		\begin{align*}
			\myint{\SetA}{\widetilde{f}^2(x)\,}{\mS(x)}{}= \sum_{n \in \N}\widetilde{f}^2(x_n) = \sum_{n \in \N}\exp(-x_n^2)^2	= \sum_{n \in \N}\frac{1}{n}
		\end{align*} 
		what shows that $\widetilde{f}$ is not square integrable \withrespectto $\mu$.
		\item We note that two representatives of $f \in L^2(\R, \lambda)$ do not necessarily represent the same equivalence class \withrespectto $\measdens{\mu}{\varrho}$. Thus, the choice of the representative in $(b)$ is important. However,  the above mentioned Sobolev embedding theorem works only for $d = 1$ since $p=2$.
	\end{enumerate}
\end{rem}

\begin{prop}\label{prop: Diri from regular and strongly local}
	The symmetric closed form $(\Erho, D(\Erho))$ is a regular and strongly local Dirichlet form. 
\end{prop}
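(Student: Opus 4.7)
Symmetry, non-negativity and closedness of $(\Erho, D(\Erho))$ have already been established, so what remains is to verify the Markovian (unit-contraction) property, regularity and strong locality. \emph{Regularity} is immediate: by construction $D = \ContDiff{\R^d}{\infty}{c}$ is dense in $D(\Erho)$ with respect to the graph norm $(\Erho_1)^{1/2}$, and $D \subseteq D(\Erho) \cap \Cont{\R^d}{c}$ is also dense in $\Cont{\R^d}{c}$ in the uniform norm.

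\emph{Strong locality} follows directly from the remark after Proposition \ref{prop: Domain is subset of Sob Space}: for $f,g \in D(\Erho)$ one has the representation $\Erho(f,g) = \sum_{i=1}^d \int_{\R^d} f^{(i)} g^{(i)}\, \varrho\, d\lambda^d$, with weak derivatives $f^{(i)}, g^{(i)} \in L^2(\R^d, \measdens{\lambda^d}{\varrho})$. If $f$ is compactly supported and $g$ is constant on an open neighbourhood $V$ of $\supp{f}$, then $g^{(i)} = 0$ $\lambda^d$-a.e.\ on $V$, while $f^{(i)} = 0$ $\lambda^d$-a.e.\ on the open set $\R^d \setminus \supp{f}$. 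The product $f^{(i)}g^{(i)}$ therefore vanishes $\lambda^d$-a.e., whence $\Erho(f,g) = 0$.

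The \emph{Markov property} is the main work: I would verify that for every $f \in D(\Erho)$ the unit contraction $f^{\sharp} \vcentcolon = (0 \vee f) \wedge 1$ lies in $D(\Erho)$ with $\Erho(f^{\sharp}, f^{\sharp}) \leq \Erho(f,f)$. First, for $f \in D$ and a smooth normal contraction $\phi_{\eps}$ approximating $t \mapsto (0 \vee t) \wedge 1$ uniformly with $\phi_{\eps}(0)=0$ and $|\phi_{\eps}'| \leq 1$, the classical chain rule gives $\nabla(\phi_{\eps} \circ f) = (\phi_{\eps}' \circ f)\nabla f$ and hence $\eucscalar{\nabla(\phi_{\eps}\circ f)}{\nabla(\phi_{\eps}\circ f)} \leq \eucscalar{\nabla f}{\nabla f}$ pointwise; integration against $\measdens{\lambda^d}{\varrho}$ yields $\Erho(\phi_{\eps}\circ f, \phi_{\eps}\circ f) \leq \Erho(f,f)$, and $\phi_{\eps}\circ f \in D$. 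Second, for general $f \in D(\Erho)$, pick $(f_n) \subseteq D$ with $f_n \to f$ in $(\Erho_1)^{1/2}$. The Lipschitz bound $|\phi_{\eps}(a)-\phi_{\eps}(b)| \leq |a-b|$ combined with $\phi_{\eps}(0) = 0$ gives $\phi_{\eps}\circ f_n \to \phi_{\eps}\circ f$ in $\Ltwomurho$, and a Banach--Saks argument inside the Hilbert space $(D(\Erho), (\Erho_1)^{1/2})$ together with the uniform $\Erho$-bound produces convex combinations of the $\phi_{\eps}\circ f_n$ converging strongly to $\phi_{\eps}\circ f$ in $(\Erho_1)^{1/2}$; lower semicontinuity then gives $\phi_{\eps}\circ f \in D(\Erho)$ and $\Erho(\phi_{\eps}\circ f) \leq \Erho(f)$. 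A final passage $\eps \downarrow 0$ by the same Hilbert-space argument delivers $f^{\sharp} \in D(\Erho)$ with $\Erho(f^{\sharp}) \leq \Erho(f)$.

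The main obstacle is this second step: $\murho$ contains the singular piece $\mS$ supported on $\SetA$ which does not enter $\Erho$ at all, so $(\Erho_1)^{1/2}$-convergence simultaneously controls the $\lambda^d$-a.e.\ class and the $\mS$-a.e.\ class of the approximations $f_n$. Fortunately, every normal contraction is $1$-Lipschitz and fixes $0$, which is precisely what is needed to make $\Ltwomurho$-convergence stable under composition with $\phi_{\eps}$ irrespective of the mixed Lebesgue/singular nature of $\murho$, so the argument goes through uniformly in this setting.
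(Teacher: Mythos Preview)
Your proof is correct and follows essentially the same approach as the paper. The paper handles the Markov property by citing \cite{MaRoeckner}, Proposition I.4.7, Exercise I.4.8 and Proposition I.4.10 (check the smooth normal contraction property on the core $D$, then transfer it to the closure), which is precisely the argument you spell out in detail; regularity and strong locality are treated identically via density of $D$ and the weak-derivative representation from Proposition \ref{prop: Domain is subset of Sob Space}.
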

\begin{proof}
	To show that $(\Erho, D(\Erho))$ is a Dirichlet form, we check the conditions of \cite{MaRoeckner}, Proposition I.4.7 (p. 34) modified as in \cite{MaRoeckner}, Exercise I.4.8 (p.35) on $D$ and apply \cite{MaRoeckner}, Proposition I.4.10 (p. 35).\\
	The regularity follows from the fact that $D \subseteq D(\Erho)$ is dense and $\ContDiff{\R^d}{\infty}{c} \subseteq \ContDiff{\R^d}{0}{c}$ is dense \withrespectto $\norm{\cdot}_{\text{sup}}$. 
	
	To show that $(\Erho, D(\Erho))$ is strongly local, let $f$ and $g$ be in $D(\Erho)$ with essential support  \esssupp{f} and \esssupp{g} compact and such that $g$ is almost surely constant on a neighbourhood $V$ of \esssupp{f}. 
	From Proposition \ref{prop: Domain is subset of Sob Space}, we know that the equivalence class of $f$ \withrespectto $\lambda^d$ is in $ H^{1,2}_{\text{loc}}(\R^d)$. Additionally, $f^{(i)}=0\ \lambda^d$-- almost everywhere on $\R^d \setminus \esssupp{f} $.
	Furthermore, $g^{(i)}$ restricted to $V$ is zero in $L^2_{\text{loc}}(V, \lambda^d)$.
	Having this at hand, we conclude
	\begin{align*}
		\Erho(f,g) 
		= \sum_{i=1}^d\myint{\esssupp{f}}{f^{(i)}g^{(i)}\,\varrho}{\lambda^d}{}+ \sum_{i=1}^d \myint{\R^d \setminus \esssupp{f}}{f^{(i)}g^{(i)}\,\varrho}{\lambda^d}{} 
		=0.
	\end{align*}
\end{proof}

\begin{cond}\label{cond: rho ess bounded}
	Let $\varrho \in \ContDiff{\R^d}{1}{}$ be bounded and $\varrho>0$.
\end{cond}

\begin{prop}\label{prop: E is recurrent}
	Assume that $\varrho$ is continuously differentiable, $\varrho>0$ and integrable with respect to $\lambda^d$. Then, $(\Erho, D(\Erho))$ is recurrent and hence conservative.\\
	If $d=1$, then the integrability assumption on $\varrho$ can be replaced by boundedness, i.e.~one can assume Condition \ref{cond: rho ess bounded}.
\end{prop}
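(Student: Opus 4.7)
The plan is to verify the standard criterion for recurrence of a symmetric Dirichlet form: produce a sequence $(u_n)_{n \in \N}$ in $D(\Erho)$ with $0 \leq u_n \leq 1$, $u_n \to 1$ $\murho$-a.e., and $\Erho(u_n, u_n) \to 0$ as $n \to \infty$. This is the classical Fukushima criterion (see e.g.~\cite{MaRoeckner}, Chapter V, or Fukushima--Oshima--Takeda). Once recurrence is established, conservativeness follows as a direct consequence, since a recurrent Dirichlet form is automatically conservative.

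In both settings I would take smooth cutoff functions $\chi_n \in \ContDiff{\R^d}{\infty}{c} = D$ satisfying $0 \le \chi_n \le 1$, $\chi_n \equiv 1$ on the ball $B_n(0)$, $\chi_n \equiv 0$ outside $B_{2n}(0)$, and $\eucnorm{\nabla \chi_n} \le C/n$ for a universal constant $C$. Because $\chi_n$ has compact support and $\murho$ is locally finite (Remark \ref{rem: Cinfcomp is dense in Ltwomurho}), we have $\chi_n \in D \subseteq D(\Erho)$, and monotone pointwise convergence $\chi_n \nearrow 1$ on all of $\R^d$ is clear.

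For the integrable case in arbitrary dimension, the estimate is immediate:
\begin{align*}
\Erho(\chi_n, \chi_n) = \myint{\R^d}{\eucnorm{\nabla \chi_n}^2 \varrho}{\lambda^d}{} \le \frac{C^2}{n^2}\myint{\R^d}{\varrho}{\lambda^d}{} \xrightarrow[n \to \infty]{} 0,
\end{align*}
since $\varrho \in L^1(\R^d, \lambda^d)$. For the one-dimensional case under Condition \ref{cond: rho ess bounded}, $\varrho$ need not be integrable, so I would use the stronger decay available from the one-dimensional Lebesgue measure of the gradient-support. Choose $\chi_n \in D$ with $\chi_n \equiv 1$ on $[-n,n]$, $\chi_n \equiv 0$ outside $[-2n,2n]$, and $\abs{\chi_n'} \le C/n$. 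Then the gradient is supported on a set of Lebesgue measure at most $2n$, so
\begin{align*}
\ErhoOne(\chi_n, \chi_n) = \myint{\R}{\abs{\chi_n'}^2 \varrho}{\lambda}{} \le \frac{C^2}{n^2}\cdot 2n \cdot \norm{\varrho}_{\text{sup}} = \frac{2C^2\norm{\varrho}_{\text{sup}}}{n} \xrightarrow[n \to \infty]{} 0.
\end{align*}

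The main obstacle is essentially bookkeeping: one must verify that the chosen $\chi_n$ really lie in $D(\Erho)$ (this follows from $\chi_n \in D$ together with local finiteness of $\murho$), and that pointwise convergence $\chi_n \to 1$ transfers to the required $\murho$-a.e.\ sense (immediate since $0 \le \chi_n \le 1$). Neither point is technically involved; the heart of the argument is the energy estimate distinguishing the integrable regime (in which the full $L^1$ norm of $\varrho$ is spent on the $1/n^2$ factor) from the bounded one-dimensional regime (where one instead exploits that the gradient support has Lebesgue measure $2n$, leaving a favorable $1/n$ factor after balancing against $\norm{\varrho}_{\text{sup}}$).
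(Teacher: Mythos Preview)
Your proposal is correct and follows essentially the same route as the paper: both construct compactly supported smooth cutoffs with $\eucnorm{\nabla \chi_n} \le C/n$ and then verify the Fukushima recurrence criterion by the identical energy estimate, splitting into the integrable case (use $\norm{\varrho}_{L^1}$) and the one-dimensional bounded case (use that the gradient support has Lebesgue measure $O(n)$). The only cosmetic difference is that the paper spells out an explicit mollifier construction for the cutoffs, whereas you take their existence as standard; also note that in your one-dimensional display the form should be $\Erho$ rather than $\ErhoOne$, since $\varrho$ is merely bounded, not identically one.
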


\begin{proof}
	The aim is to apply \cite{Fukushima}, Theorem 1.6.3 (ii) (p. 58). 
	We define
	\begin{align*}
		\Psi \vcentcolon \R^d \to \R, 
		x \mapsto \begin{cases} N \exp\left(-\frac{1}{1-(2 \eucnorm{x})^2}\right), &\text{ if } \eucnorm{x}< \nicefrac{1}{2}\\
			0, &\text{ else}\end{cases},
	\end{align*}
	where $N$ is a normalizing factor such that $\myint{\R^d}{\Psi\,}{\lambda^d}{}=1$. We set $M \vcentcolon= B_2(0)$ and define 
	\begin{align*}
		\varphi\vcentcolon \R^d \to \R, 
		x \mapsto \myint{\R^d}{\Psi(x-y)\mathbbm{1}_{M}(y)\,}{\lambda^d(y)}{}.
	\end{align*}
	Furthermore, we define $\varphi_n(x) \vcentcolon= \varphi\left(\frac{x}{n}\right)$ for all $x \in \R^d$. We note that $(\varphi_n)_{n \in \N} \subseteq \ContDiff{\R^d}{\infty}{c}$ with supp($\varphi_n$)$\subseteq \overline{B_{3n}(0)}$ for all $n \in \N$. 
	For $x \in \R^d$, we have $\varphi_n(x) \to 1$ as $n \to \infty$
	and 
	\begin{align*}
		\partial_{x_i}  \varphi_n(x) = \partial_{x_i} \varphi\left(\frac{x}{n}\right)
		= \frac{1}{n} \myint{\R^d}{\partial_{x_i} \Psi\left(\frac{x}{n}-y\right)\mathbbm{1}_M(y)\,}{\lambda^d(y)}{}
	\end{align*}
	for $i=1, \ldots d$.
	Since there exists a constant $C \in \R_{>0}$ that bounds $\partial_{x_i} \Psi$ for all $i=1, \ldots d$, we have for all those $i$ that 
	$\abs{\partial_{x_i}  \varphi_n(x)} \leq \frac{1}{n}C \lambda^d(M)$ for all $x \in \R^d$.
	We obtain
	\begin{align}\label{eq: show conv in recurrence}
		\Erho(\varphi_n, \varphi_n) 
		\leq d \myint{\overline{B_{3n}(0)}}{\left(\frac{1}{n}C\lambda^d(M)\right)^2\,\varrho(x)}{\lambda^d}{}
		\leq d \left(\frac{1}{n}C\lambda^d(M)\right)^2 \myint{\overline{B_{3n}(0)}}{\varrho(x)\,}{\lambda^d}{}.
	\end{align}
	If $\varrho$ is integrable with respect to $\lambda^d$, then \eqref{eq: show conv in recurrence} is zero in the limit for $n \to \infty$.
	If $d=1$, then $\lambda(\overline{B_{3n}(0)})=6n$ and the boundedness of $\varrho$ implies the same result.
\end{proof}

	\section{The associated Generator}
In this section, we give an explicit representation of the generator of the Dirichlet form $(\Erho, D(\Erho))$ given in \eqref{defn: Erho} on the space $\ContDiff{\R^d}{0}{c} \cap \ContDiff{\R^d\setminus\SetA}{2}{b} \subseteq \Ltwomurho$, where $\ContDiff{\R^d\setminus\SetA}{2}{b}$ denotes the space of twice continuously differentiable functions on $\R^d\setminus \SetA$ that are bounded and have bounded derivatives up to order $2$, for special choices of $\SetA$ and $\mS$. In dimension $d=1$, we assume that Condition \ref{cond: d is one} holds true. For $d \geq 2$, we assume that $A$ is given as Lipschitz boundary (see Condition \ref{cond: Lipschitz boundary}). We therefore start with the following definition.

\begin{definition}[{\cite{ALTLAEnglish}, A8.2 (p.259)}]\label{def: Lipschitz boundary}
	Let $d \geq 2$ and $\Lipschitz \subseteq \R^d$ be open and bounded. We say that $\Lipschitz$ has Lipschitz boundary if $\partial\Lipschitz$ can be covered by finitely many open sets $U_1, \ldots U_l$ such that $\partial\Lipschitz\cap U_k$ for $k=1, \ldots l$ is the graph of a Lipschitz continuous function with $\Lipschitz\cap U_k$  in each case lying on one side of this graph. This means the following: There exists $l \in \N$ and for $k = 1,...,l$ an orthonormal basis $B_k = \{e_1^k, \ldots e_d^k\}$ \withrespectto the Euclidean scalar product, a real number $h_k \in \R_{>0}$, an open cuboid $ I_k \vcentcolon=\bigtimes_{j=1}^{d-1}(a_j^k, b_j^k)$ and $\varphi_k \vcentcolon \R^{d-1} \to \R$ Lipschitz continuous such that
	with the notation $\vdimlower\vcentcolon=(v_1, \ldots, v_{d-1}) \text{ for }v=\sum_{j=1}^{d}v_j e_j^k$
	it holds 
	\begin{align*}
		U_k = \left\{\sum_{j=1}^{d} v_j e_j^k \in \R^d \vcentcolon v_{\textbf{.} d-1} \in I_k \text{ and }\abs{v_d-\varphi_k(\vdimlower)}<h_k\right\}
	\end{align*}
	and for $v \in U_k$ we have $v \in \partial\Lipschitz$ if $v_d =\varphi_k(\vdimlower)$, $v \in \Lipschitz$ if $0< v_d-\varphi_k(\vdimlower)< h_k$ and $v \in \LipschitzComplement$ if $-h_k < v_d -\varphi_k(\vdimlower)<0 $.
	Furthermore, $\partial\Lipschitz \subseteq \bigcup_{k=1}^lU_k$.
\end{definition}

We denote the canonical basis of $\R^d$ by $\{e_1, \ldots e_d\}$. 

\begin{lem}\label{lem: Partition of unity Rd}
	Let $\Lipschitz$ be open and bounded with Lipschitz boundary $\partial \Lipschitz$. Then there exists a locally finite open cover of $\R^d$ denoted by $(U_k)_{k \in \N}$ that extends the open cover of $\partial\Lipschitz$ in a way that the sets added to the open cover of $\partial \Lipschitz$ are disjoint with $\partial\Lipschitz$.
	Moreover, there exists  a corresponding partition of unity $(\eta_k)_{k \in \N}$.
\end{lem}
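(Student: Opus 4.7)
The plan is to take the finite open cover $U_1, \ldots, U_l$ of $\partial \Lipschitz$ provided by Definition \ref{def: Lipschitz boundary} and adjoin countably many further open sets that stay away from $\partial \Lipschitz$, producing a locally finite open cover of $\R^d$. Observe first that $\partial \Lipschitz$ is compact, being closed and bounded (the latter because $\Lipschitz$ is bounded). The union $V \vcentcolon = \bigcup_{k=1}^l U_k$ is an open neighbourhood of $\partial \Lipschitz$, and each $U_k$ is bounded by its explicit description in Definition \ref{def: Lipschitz boundary}, so $V$ is bounded and $\R^d \setminus V$ is a nonempty closed set disjoint from $\partial \Lipschitz$. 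Compactness of $\partial \Lipschitz$ then yields
\begin{align*}
\delta \vcentcolon = \inf\{\eucnorm{x-y}\vcentcolon x \in \partial \Lipschitz,\ y \in \R^d \setminus V\} > 0.
\end{align*}
Setting $N \vcentcolon = \{x \in \R^d \vcentcolon \inf_{y \in \partial \Lipschitz}\eucnorm{x-y} < \delta/2\}$ and $U_{l+1}\vcentcolon = \R^d \setminus \overline{N}$ produces an open set disjoint from $\partial \Lipschitz$ such that $U_1, \ldots, U_{l+1}$ is a finite open cover of $\R^d$.

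To obtain a genuinely $\N$-indexed locally finite cover, I would intersect $U_{l+1}$ with a locally finite countable open cover of $\R^d$ (for instance the open concentric annuli of increasing integer radius). Every resulting piece is still disjoint from $\partial \Lipschitz$, and every $x \in \R^d$ has a bounded neighbourhood meeting only finitely many of these annuli and at most the $l$ original charts. Reindexing by $\N$ yields the desired locally finite open cover $(U_k)_{k \in \N}$ that extends $U_1, \ldots, U_l$ in the prescribed way, with every added set disjoint from $\partial \Lipschitz$.

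For the partition of unity, I would invoke the standard construction on the paracompact smooth manifold $\R^d$. Concretely, for each $k$ pick a non-negative $\zeta_k \in \ContDiff{\R^d}{\infty}{}$ with $\supp{\zeta_k}\subseteq U_k$ such that the positivity set $\{\zeta_k > 0\}$ covers $U_k$; this is possible because $\R^d$ is second countable, so each $U_k$ can be written as a countable union of open balls compactly contained in $U_k$ on which one superimposes smooth bump functions. Local finiteness of $(U_k)_{k \in \N}$ then renders $S \vcentcolon = \sum_{k \in \N}\zeta_k$ a well defined, smooth and strictly positive function on $\R^d$, and $\eta_k \vcentcolon = \zeta_k/S$ yields the desired partition of unity subordinate to $(U_k)_{k \in \N}$.

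The only delicate step is the very first one, where compactness of $\partial \Lipschitz$ is exploited to strictly separate the newly added members from $\partial \Lipschitz$ via a positive distance $\delta$; everything afterwards is standard paracompactness and mollifier machinery on $\R^d$.
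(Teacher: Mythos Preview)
Your cover construction is essentially the paper's: both start from the chart sets $U_1,\ldots,U_l$, carve out a closed neighbourhood of $\partial\Lipschitz$ inside $V=\bigcup_{k=1}^l U_k$ (the paper via the explicit Lipschitz graphs, you via the positive distance $\delta$), and then tile the remainder with concentric annuli. The paper packages the first annulus into a bounded $U_{l+1}=G^{\text{C}}\cap B_{K+1}(0)$ and takes $U_{l+n}=B_{K+n}(0)\setminus B_{K+n-2}(0)$ for $n\ge 2$, while you keep $U_{l+1}=\R^d\setminus\overline{N}$ unbounded and intersect with annuli afterwards; the outcome is the same locally finite bounded cover.

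Your partition-of-unity sketch contains an inconsistency, though. You require $\supp{\zeta_k}\subseteq U_k$ and simultaneously $\{\zeta_k>0\}\supseteq U_k$; for an open $U_k$ that is not closed these are incompatible, since the first forces $\zeta_k=0$ on $\partial U_k$ while the second forces $\overline{\{\zeta_k>0\}}=\overline{U_k}\supsetneq U_k$. The standard fix is a shrinking of the cover: first produce closed $F_k\subseteq U_k$ with $\bigcup_k F_k=\R^d$, then take $\zeta_k$ positive on $F_k$ with $\supp{\zeta_k}\subseteq U_k$, so that $S=\sum_k\zeta_k>0$ everywhere. The paper avoids the issue entirely by noting that all $U_k$ are bounded and citing \cite{ALTLAEnglish}, Proposition~(p.118), for the existence of a subordinate partition of unity.
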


\begin{proof}
	Let $d \geq 2$ and let $U_1, \ldots, U_l$ be a finite open cover of $\partial \Lipschitz$ as in the Definition of Lipschitz boundary \ref{def: Lipschitz boundary}. We choose $K \in \N$ such that $B_{K-1}(0) \supseteq \Lipschitz$. 
	For $n \geq 2$, we define $U_{l+n}= \vcentcolon B_{K+n}(0) \setminus B_{K+n-2}(0)$.
	Since there exists some $\xi>0$ such that
	\begin{equation*}
		G \vcentcolon= \bigcup\limits_{i=1}^l \overline{\left\{\sum_{j=1}^{d-1} z_i e_j^k + \left(\varphi_k(z_{\textbf{.}d-1})+ \delta \right)e_d^k \vcentcolon (z_{\textbf{.}d-1}) \in\bigtimes_{j=1}^{d-1}(a_j^k+\xi, b_j^k-\xi)\text{ and }\delta \in [- \xi, \xi]\right\}}
	\end{equation*}
	is a subset of $\bigcup\limits_{i=1}^l U_k$, the set $U_{l+1}\vcentcolon = G^{\text{C}}\cap B_{K+1}(0)$
	completes $(U_k)_{k \in \N}$ to a locally finite open cover of $\R^d$. Since all $U_k$ are bounded,  \cite{ALTLAEnglish}, Proposition (p.118) ensures the existence of a corresponding partition of unity $(\eta_k)_{k \in \N}$.
\end{proof}

\begin{cond}\label{cond: Lipschitz boundary}
	Let $d \geq 2$ and let $\Lipschitz$ be an open and bounded set with Lipschitz boundary. We set $\SetA \vcentcolon = \partial \Lipschitz$. Moreover, we define
	\begin{align*}
		\mS \vcentcolon \Borel{\SetA} \to [0, \infty],\ F \mapsto \myint{\SetA}{\mathbbm{1}_F\,}{H^{d-1}}{} \vcentcolon= \sum_{k=1}^l \myint{\SetA}{\eta_k \mathbbm{1}_F\,}{H^{d-1}}{}
	\end{align*}
	in the sense of a boundary integral (see \cite{ALTLAEnglish}, A8.5 (p.263)), where 
	\begin{equation*}
		\myint{\SetA}{\eta_k \mathbbm{1}_F\,}{H^{d-1}}{} = \myint{I_k}{\eta_k \mathbbm{1}_F(\vdimlower, \varphi_k(\vdimlower))\sqrt{1+\sum_{i=1}^{d-1}(\varphi_k^{(i)})^2(\vdimlower)}\,}{\lambda^{d-1}(\vdimlower)}{}.
	\end{equation*}
	The measure $H^{d-1}$ is called $(d-1)$-dimensional Hausdorff measure.
\end{cond}
Since $\varphi_k$ is Lipschitz continuous on $\R^{d-1}$, it holds $\varphi_k \in \text{H}^{1, \infty}_{\text{loc}}(\R^{d-1})$. For $i \in \{1, \ldots, d\}$, we denote by $\varphi_k^{(i)}$ the weak derivative of $\varphi_k$ in direction $e_i$ in $L^{\infty}_{\text{loc}}(\R^{d-1})$. 
\noindent We note that under Condition \ref{cond: Lipschitz boundary}, $\SetA$ is a closed $\lambda^d$ null set and $\mS$ is finite on $\Borel{\SetA}$.

\begin{lem}\label{lem: hplus and hminus}
	Let $h \in \ContDiff{\R^d\setminus\SetA}{1}{b}$, \ie a continuous function on $\R^d\setminus\SetA$ that is bounded, differentiable up to order $1$ and has bounded derivatives.
	\begin{enumerate}[(i)]
		\item Assume that Condition \ref{cond: d is one} is satisfied. Then both functions $\extension{h}{r}, \extension{h}{l} \vcentcolon \R \to \R$ given by
		\begin{align*}
			\extension{h}{r}(x) \vcentcolon= \begin{cases}
				h(x), \text{ if } x \in \R\setminus \SetA\\
				\lim\limits_{n \to \infty} h(y_n) \text{ for an arbitrary sequence }(y_n)_{n \in \N} \subseteq \R\setminus\SetA\text{ with }x<y_n\\ \text{ for all }n \in \N,\text{if } x \in \SetA
			\end{cases} 
		\end{align*}
		and
		\begin{align*}
			\extension{h}{l}(x) \vcentcolon=
			\begin{cases}
				h(x), \text{ if } x \in \R\setminus\SetA\\
				\lim\limits_{n \to \infty} h(y_n)\text{ for an arbitrary sequence }(y_n)_{n \in \N} \subseteq \R\setminus\SetA\text{ with }y_n<x \\\text{ for all }n \in \N, \text{if } x \in \SetA
			\end{cases}
		\end{align*}
		for $x \in \R$ are well defined, bounded and measurable.
		\item Assume that Condition \ref{cond: Lipschitz boundary} is satisfied. Then both functions $\extension{h}{\Lipschitz},\extension{h}{\LipschitzComplement} \vcentcolon\R^d \to \R$
		\begin{align*}
			\extension{h}{\Lipschitz}(x)\vcentcolon= \begin{cases}
				h(x), \text{ if } x \in \R^d\setminus\SetA\\
				\lim\limits_{n \to \infty} h(x^{(n)}) \text{ for an arbitrary sequence }(x^{(n)})_{n \in \N} \subseteq \Lipschitz, \text{ if } x \in \SetA
			\end{cases} 
		\end{align*}
		and
		\begin{align*}
			\extension{h}{\LipschitzComplement}(x)\vcentcolon=
			\begin{cases}
				h(x), \text{ if } x \in \R^d\setminus\SetA\\
				\lim\limits_{n \to \infty} h(x^{(n)})\text{ for an arbitrary sequence }(x^{(n)})_{n \in \N} \subseteq (\overline{\Lipschitz})^{\text{C}}, \text{ if }x \in\SetA
			\end{cases}
		\end{align*}
		for $x \in \R^d$ are well defined, bounded and measurable.
	\end{enumerate}
\end{lem}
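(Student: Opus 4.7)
The plan is to handle part (i) by exploiting the discreteness of $\SetA$ in dimension one, and part (ii) via a bi-Lipschitz chart that straightens the Lipschitz boundary inside each of the finitely many neighbourhoods $U_k$. In both cases the underlying idea is the same: boundedness of the derivative of $h$ combined with a suitable local geometric structure forces $h$ to be uniformly Lipschitz on one side of $\SetA$, yielding existence and sequence-independence of the one-sided limits.

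For part (i), fix $x \in \SetA$. Condition \ref{cond: d is one} supplies some $\eps>0$ with $(x-\eps, x+\eps)\cap \SetA = \{x\}$. On each of the intervals $(x-\eps, x)$ and $(x, x+\eps)$ the function $h$ is continuously differentiable with bounded derivative, hence uniformly Lipschitz; the Cauchy criterion then gives existence of the one-sided limits, and these limits are independent of the approximating sequence by a standard argument, so $\extension{h}{r}$ and $\extension{h}{l}$ are well defined. Boundedness is inherited from $\norm{h}_{\text{sup}}$. For measurability, the extensions agree with $h$ on the open Borel set $\R\setminus \SetA$, while $\SetA$ is countable, so every function defined on $\SetA$ is trivially Borel, giving Borel measurability on $\R$.

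For part (ii), fix $x \in \SetA = \partial \Lipschitz$ and pick $k \in \{1, \ldots, l\}$ with $x \in U_k$. Define the chart
\begin{align*}
\Phi_k \vcentcolon I_k \times (-h_k, h_k) \to U_k, \qquad (w, s) \mapsto \sum_{j=1}^{d-1} w_j e_j^k + (\varphi_k(w)+s)\, e_d^k.
\end{align*}
Since $\varphi_k$ is Lipschitz on $\R^{d-1}$, $\Phi_k$ is a bi-Lipschitz bijection which maps $I_k \times (0, h_k)$, $I_k \times \{0\}$, and $I_k \times (-h_k, 0)$ onto $\Lipschitz \cap U_k$, $\SetA \cap U_k$, and $\LipschitzComplement \cap U_k$, respectively. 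Set $\tilde h \vcentcolon= h \circ \Phi_k$. For any two points $p, q \in I_k \times (0, h_k)$, the straight segment between them lies in this convex set, and its $\Phi_k$-image is a Lipschitz curve in $\Lipschitz \cap U_k \subseteq \R^d \setminus \SetA$ of length at most $\mathrm{Lip}(\Phi_k)\,|p-q|$; since $h$ is $C^1$ with bounded gradient on $\R^d\setminus \SetA$, this yields $|\tilde h(p)-\tilde h(q)|\leq \norm{\nabla h}_{\text{sup}}\,\mathrm{Lip}(\Phi_k)\,|p-q|$. Thus $\tilde h$ is uniformly Lipschitz on $I_k \times (0, h_k)$ and extends continuously to $I_k \times [0, h_k)$. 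For any sequence $(x^{(n)})\subseteq \Lipschitz$ with $x^{(n)}\to x$, one has $x^{(n)} \in U_k$ eventually, $\Phi_k^{-1}(x^{(n)})$ converges in $I_k \times [0, h_k)$ to $\Phi_k^{-1}(x) \in I_k \times \{0\}$, and therefore $h(x^{(n)}) = \tilde h(\Phi_k^{-1}(x^{(n)}))$ converges to the sequence-independent value of the continuous extension at $\Phi_k^{-1}(x)$. This defines $\extension{h}{\Lipschitz}(x)$; the analogous argument using $I_k \times (-h_k, 0)$ yields $\extension{h}{\LipschitzComplement}(x)$.

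Boundedness in (ii) is again inherited from $\norm{h}_{\text{sup}}$. For measurability, the extensions coincide with $h$ on the open Borel set $\R^d\setminus \SetA$; on each $\SetA \cap U_k$ the restriction of the extension equals the pullback under the homeomorphism $\Phi_k^{-1}$ of the boundary values of the continuous extension of $\tilde h$, hence is continuous in the subspace topology of $\SetA\cap U_k$. Since continuity is a local property and the $U_k$ cover $\SetA$, the extension is continuous on $\SetA$ and therefore Borel on $\R^d$. The main obstacle is the well-definedness step in (ii): sequences in $\Lipschitz$ can approach a boundary point along arbitrary paths and $\Lipschitz$ need not be convex or even locally convex, so one cannot directly use a Euclidean gradient estimate in $\R^d$; the bi-Lipschitz chart $\Phi_k$ is the crucial device that reduces the problem to a convex product where the bounded-gradient estimate applies cleanly.
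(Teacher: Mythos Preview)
Your proof is correct. Part (i) is essentially the same argument as the paper's: both exploit that near each point of $\SetA$ there is a one-sided interval on which $h$ is $C^1$ with bounded derivative, hence Lipschitz, so the one-sided limits exist by the Cauchy criterion. Your measurability argument (countability of $\SetA$) differs from the paper's (right/left-continuity of the extensions), but both are valid.

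Part (ii) is where the two approaches genuinely diverge. The paper works directly in the original coordinates: given $x^{(n)}, x^{(m)} \in U_k\cap\Lipschitz$ close to $x$, it constructs an explicit three-piece polygonal path $\gamma_1\cup\gamma_2\cup\gamma_3$ (go up in the $e_d^k$ direction to a fixed height above the graph, move across, come back down) that stays inside $U_k\cap\Lipschitz$, and applies the mean value theorem on each straight piece. You instead introduce the bi-Lipschitz chart $\Phi_k$ that straightens the boundary, reduce to the convex half-slab $I_k\times(0,h_k)$, and invoke the fact that $h$ composed with a rectifiable curve in $\R^d\setminus\SetA$ satisfies $|h(\gamma(1))-h(\gamma(0))|\le \|\nabla h\|_{\sup}\cdot\mathrm{length}(\gamma)$. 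Your argument is cleaner and more conceptual, and it immediately yields continuity of the extension on $\overline{\Lipschitz}\cap U_k$; the paper's explicit path construction is more hands-on and keeps everything at the level of the classical mean value theorem on line segments, avoiding the (standard but nontrivial) arc-length inequality for $C^1$ functions along merely Lipschitz curves. Both routes arrive at the same conclusion with comparable effort.
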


\begin{proof}
	Let $h \in \ContDiff{\R^d\setminus\SetA}{1}{b}$.
	\begin{enumerate}[(i)]
		\item Assume that Condition \ref{cond: d is one} is satisfied. Let $x \in \SetA$.  Then there exists $\eps_x>0$ such that $(x, x+\eps_x) \subseteq \R\setminus\SetA$. We choose an arbitrary sequence $(y_n)_{n \in \N} \subseteq (x, x+\eps_x)$ with $\lim_{n \to \infty}y_n=x$. For any combination of $n,m \in \N$ with $y_n<y_m$ we have $h\vcentcolon [y_n, y_m] \to \R$ is continuous and differentiable on $(y_n, y_m)$. The mean value theorem implies $\abs{h(y_n)-h(y_m)} \leq \norm{h'}_{\sup} \abs{y_n-y_m}$. Thus, $(h(y_n))_{n \in \N}$ is a Cauchy sequence in $\R$ and there exists $\lim_{n \to \infty}h(y_n)$. The same argument also shows independence of the limit of the sequence $(y_n)_{n \in \N}$ and consequently, there exists $\extension{h}{r}(x) \vcentcolon = \lim_{n \to \infty}h(y_n)$ for an arbitrary sequence $(y_n)_{n \in \N}$. A similar proof shows the existence of  $\extension{h}{l}$. Note that by definition, $\extension{h}{r}$ and $\extension{h}{l}$ are bounded by the same constants as is $h$. Since $\extension{h}{r}$ is a right-continuous and $\extension{h}{l}$ is a left-continuous function on $\R$, they are $\Borel{\R}/\Borel{\R}$-measurable. 
		\item Assume that Condition \ref{cond: Lipschitz boundary} is satisfied.
		Let $C \in \R_{\geq 1}$ such that $\eucnorm{J h (x)} \leq C$ for all $x \in \R^d \setminus \SetA$, denoting by $J$ the corresponding Jacobian matrix. We show that $\extension{h}{\Lipschitz}$ is well defined. 
		Let $x \in \SetA$ and $(x^{(n)})_{n \in \N} \subseteq \Lipschitz$ with $\lim_{n \to \infty}x^{(n)} = x$. 
		There is $k \in \N$ with $x \in \SetA \cap U_k$ and \withoutloss $(x^{(n)})_{n \in \N} \subseteq U_k \cap \Lipschitz$. Let $0<\eps<h_k$ be arbitrary.
		The Lipschitz continuity of $\varphi_k$ on $\R^{d-1}$ implies the existence of $0<\delta<\frac{\eps}{2C}$ such that $\abs{\varphi_k(v)-\varphi_k(w)}< \frac{\epsilon}{4C}$ for all $v,w \in \R^{d-1}$ with $\eucnorm{v-w}<\delta$. Choose $N \in \N$ such that $\eucnorm{x^{(n)}-x}<\frac{\delta}{4}$ holds for all $n \geq N$.\\
		Now let $n,m >N$ be arbitrary.
		Define
		\begin{align*}
			\gamma_1 &\vcentcolon \left[0, \varphi_k(x_{\textbf{.}d-1})+\frac{\eps}{4C}-x_d^{(n)}\right] \to \R^d, s \mapsto x^{(n)} + s e_d^k\\
			\gamma_2 &\vcentcolon [0,1]\to \R^d, s \mapsto \sum_{i=1}^{d-1}(x_i^{(n)} + s(x^{(m)}_i-x_i^{(n)}))e_i^k + \left(\varphi_k(x_{\textbf{.}d-1})+\frac{\eps}{4C}\right)e_d^k\\
			\gamma_3 &\vcentcolon \left[0, \varphi_k(x_{\textbf{.}d-1})+\frac{\eps}{4C}-x_d^{(m)}\right] \to \R^d, s \mapsto \sum_{i=1}^{d-1}x_i^{(m)}e_i^k+ \left(\varphi_k(x_{\textbf{.}d-1})+\frac{\eps}{4C}-s\right)e_d^k.
		\end{align*}
		Note that the choice of $n$ and $m$ implies $\varphi_k(x_{\textbf{.}d-1})+\frac{\eps}{4C}-x_d^{(n)}>0$ and $\varphi_k(x_{\textbf{.}d-1})+\frac{\eps}{4C}-x_d^{(m)}>0$ and that the images of all functions $\gamma_i, i=1,\ldots, 3$ are a subset of $U_k \cap \Lipschitz \setminus  \SetA$.
		We estimate with the mean value theorem 
		\begin{align*}
			&\abs{h(x^{(n)})-h(x^{(m)})} \leq \abs{h(\gamma_1(0)) - h\left(\gamma_1\left(\varphi_k(x_{\textbf{.}d-1})+\frac{\eps}{4C}-x_d^{(n)}\right)\right)}\\
			&+ \abs{h(\gamma_2(0))-h(\gamma_2(1))}
			+ \abs{h(\gamma_3(0)) - h\left(\gamma_3\left( \varphi_k(x_{\textbf{.}d-1})+\frac{\eps}{4C}-x_d^{(m)}\right)\right)}\\
			&\leq  \eucnorm{Jh(\gamma_1(\xi_1))J\gamma_1(\xi_1)}\abs{\varphi_k(x_{\textbf{.}d-1})+\frac{\eps}{4C}-x_d^{(n)}}
			\\&+  \eucnorm{Jh(\gamma_2(\xi_2))J\gamma_2(\xi_2)}\abs{1-0}+  \eucnorm{Jh(\gamma_3(\xi_3))J\gamma_3(\xi_3)}\abs{ \varphi_k(x_{\textbf{.}d-1})+\frac{\eps}{4C}-x_d^{(m)}}
			\\&\leq  C \left(\frac{\eps}{4C}+\frac{\delta}{4}\right)
			+ C \frac{\delta}{2}
			+ C \left(\frac{\eps}{4C}+\frac{\delta}{4}\right)= C \left(\delta + \frac{\eps}{2C}\right) < \eps.
		\end{align*}
		Therefore, $h(x^{(n)}))_{n \in \N}$ is a Cauchy sequence and there exists $\lim_{n \to \infty}h\left(x^{(n)}\right) \in \R$. With the same arguments as above, the limit is independent of the chosen sequence in $\Lipschitz$. Thus, there exists $\extension{h}{\Lipschitz}(x) \vcentcolon = \lim_{n \to \infty} h(x^{(n)})$ for an arbitrary sequence $(x^{(n)})_{n \in \N} \subseteq \Lipschitz$.
		The restriction $\restrictfunc{\extension{h}{\Lipschitz}}{\overline{U}}$ of $\extension{h}{\Lipschitz}$ to $\overline{U}$ is continuous and consequently, $\Borel{\overline{U}}$ measurable. Thus, $\extension{h}{\Lipschitz} = \mathbbm{1}_{\overline{U}}\left(\restrictfunc{\extension{h}{\Lipschitz}}{\overline{U}}\right) + \mathbbm{1}_{\LipschitzComplement}h$ is $\Borel{\R^d}$ measurable.
		A similar proof shows the existence and measurability of  $\extension{h}{\LipschitzComplement}$. 
		Note that by definition, $\extension{h}{\Lipschitz}$ and $\extension{h}{\LipschitzComplement}$ are bounded by the same constants as is $h$.
	\end{enumerate}
\end{proof}	

\begin{prop}\label{prop: calculation of Uz in $d=1$}
	Assume that Condition \ref{cond: d is one} is satisfied. Let $h \in \ContDiff{\R\setminus\SetA}{1}{b}$ and $g \in \ContDiff{\R}{\infty}{c}$. Let $(U_z)_{z \in \Z}$ be the locally finite open cover of $\R$ defined by $U_z \vcentcolon =(z, z+2)$ and $(\eta_z)_{z \in \Z}$ be the corresponding partition of unity.
	\begin{enumerate}[(i)]
		\item Let $z \in \Z$ with $U_z \cap \SetA = \emptyset$. Then 
		\begin{align*}
			\myint{U_z}{\eta_z g' h \,\varrho}{\lambda}{} = -\myint{U_z}{(\eta_k h \varrho)'g\,}{\lambda}{}.
		\end{align*}
		\item Let $z \in \Z$ with $U_z \cap \SetA \neq \emptyset$.  Then 
		\begin{align*}
			\myint{U_z\setminus \SetA}{\eta_z g' h \,\varrho}{\lambda}{}  = - \myint{U_z\setminus \SetA}{(\eta_zh \varrho)'g\,}{\lambda}{} + \myint{\SetA}{\eta_z g\left(\extension{h}{l}- \extension{h}{r}\right)\,\varrho}{\mS}{}.
		\end{align*}
	\end{enumerate}
\end{prop}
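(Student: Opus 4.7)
The plan is to apply the classical fundamental theorem of calculus on $U_z$ in part (i) and on a finite family of subintervals in part (ii). Two preliminary observations underlie both parts: firstly, being subordinate to the cover $(U_z)_{z\in\Z}$, the partition-of-unity element $\eta_z$ satisfies $\supp{\eta_z}\subseteq U_z=(z,z+2)$, so $\eta_z$ in particular vanishes at the endpoints $z$ and $z+2$; secondly, under Condition~\ref{cond: d is one} the compact interval $\overline{U_z}$ meets $\SetA$ in only finitely many points, as one sees by covering $\overline{U_z}$ with the $\eps_x$-neighbourhoods from the condition and invoking Heine--Borel.

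For part (i), since $U_z\cap\SetA=\emptyset$ the function $h$ is $C^1$ on all of $U_z$, so that $\eta_z h\varrho$ is a $C^1$ function supported inside $U_z$. Ordinary integration by parts on the open interval $(z,z+2)$ yields the claim, with the boundary contributions at $z$ and $z+2$ vanishing because $\eta_z$ does.

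For part (ii), I would enumerate $U_z\cap\SetA$ as $y_1<\cdots<y_M$ and set $y_0\vcentcolon=z$, $y_{M+1}\vcentcolon=z+2$. Up to a $\lambda$-null set, $U_z\setminus\SetA$ is then the disjoint union of the open subintervals $(y_j,y_{j+1})$ for $j=0,\ldots,M$. On each such subinterval $h$ is $C^1$, and by Lemma~\ref{lem: hplus and hminus} it extends continuously to the closure with right limit $\extension{h}{r}(y_j)$ at the left endpoint and left limit $\extension{h}{l}(y_{j+1})$ at the right endpoint. Classical integration by parts on each piece gives
\[
\myint{y_j}{\eta_z g'h\varrho}{\lambda}{y_{j+1}} = \bigl[\eta_z g h \varrho\bigr]_{y_j^+}^{y_{j+1}^-} - \myint{y_j}{(\eta_z h\varrho)'g}{\lambda}{y_{j+1}}.
\]
Summing over $j=0,\ldots,M$ regroups the right-hand integrals into a single integral over $U_z\setminus\SetA$. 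The outer boundary terms at $y_0=z$ and $y_{M+1}=z+2$ vanish since $\eta_z$ does; at each interior node $y_j$ the contribution $+\eta_z(y_j)g(y_j)\extension{h}{l}(y_j)\varrho(y_j)$ from the upper endpoint of $(y_{j-1},y_j)$ combines with the contribution $-\eta_z(y_j)g(y_j)\extension{h}{r}(y_j)\varrho(y_j)$ from the lower endpoint of $(y_j,y_{j+1})$ to yield $\eta_z(y_j)g(y_j)\varrho(y_j)\bigl(\extension{h}{l}(y_j)-\extension{h}{r}(y_j)\bigr)$. Because $\mS=\sum_{n\in\N}\delta_{x_n}$ and $\eta_z$ vanishes outside $U_z$, summing these jump contributions over $j=1,\ldots,M$ reproduces precisely $\myint{\SetA}{\eta_z g(\extension{h}{l}-\extension{h}{r})\varrho}{\mS}{}$, completing (ii).

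The only subtlety I anticipate is the signed bookkeeping of the interior boundary terms, in particular matching the left and right limits from Lemma~\ref{lem: hplus and hminus} to the correct endpoint of each subinterval; otherwise the argument reduces to the classical one-dimensional integration by parts formula, and I foresee no substantial obstacle.
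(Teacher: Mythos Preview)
Your proposal is correct and follows essentially the same approach as the paper: both decompose $U_z\setminus\SetA$ into the finitely many open subintervals determined by $\SetA\cap U_z$, integrate by parts on each piece, and collect the interior boundary contributions into the $\mS$-integral using that $\eta_z$ vanishes at $z$ and $z+2$. The only cosmetic difference is that the paper carries out the integration by parts via an explicit $\varepsilon$-shrinking of each subinterval together with dominated convergence, whereas you invoke the continuous extension of $h$ from Lemma~\ref{lem: hplus and hminus} directly; both are valid and yield the same computation.
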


\begin{proof}
	Let $h \in \ContDiff{\R\setminus\SetA}{1}{b}$ and $g \in \ContDiff{\R}{\infty}{c}$.
	\begin{enumerate}[(i)]
		\item Choose $z \in \Z$ with $U_z \cap \SetA = \emptyset$. Lebesgue dominated convergence and integration by parts for Riemann integrals yield
		\begin{align*}
			&\myint{U_z}{\eta_z g' h \, \varrho}{\lambda}{} = \lim_{\epsilon\to 0}\myint{[z+\epsilon, z+2-\epsilon]}{\eta_zg'h\,\varrho}{\lambda}{}
			= -\myint{U_z}{(\eta_zh\varrho)'g\,}{\lambda}{} \\+ &\lim_{\epsilon \to 0} \left(\eta_z g h \varrho(z+2-\varepsilon) - \eta_z g h \varrho (z+\epsilon)\right)
			=  -\myint{U_z}{(\eta_zh\varrho)'g\,}{\lambda}{}, 
		\end{align*}
		taking into account that $\eta_z$ has compact support in $U_z$.
		\item Let $z \in \Z$ with $U_z \cap \SetA \neq \emptyset$. The assumption on $\SetA$ and the definition of $U_z$ imply that  $U_z \cap \SetA$ is finite. We denote its elements in increasing order by $x_1^z, \ldots, x_{m_z}^z$ and set $x_0^z \vcentcolon = z$ and $x_{m_z+1}^z \vcentcolon = z+2$. Then
		$U_z \setminus \SetA =  \bigcup_{i=0}^{m_z}(x_i^z, x_{i+1}^z)$.
		We have with Lebesgue dominated convergence and integration by parts for Riemann integrals
		\begin{align*}
			&\myint{U_z\setminus \SetA}{\eta_z g' h \, \varrho}{\lambda}{} = \sum_{i=0}^{m_z}\lim_{\epsilon\to 0}\myint{[x_i^z+\epsilon, x_{i+1}^z-\epsilon]}{\eta_zg'h\,\varrho}{\lambda}{}\\
			&= \sum_{i=0}^{m_z}\lim_{\epsilon\to 0}\myint{[x_i^z+\epsilon, x_{i+1}^z-\epsilon]}{-(\eta_zh \varrho)'g\,}{\lambda}{} + \eta_zh\varrho g(x_{i+1}^z-\epsilon) - \eta_zh\varrho g(x_{i}^z+\epsilon)\\
			&= -\myint{U_z\setminus \SetA}{(\eta_zh \varrho)'g\,}{\lambda}{} + \sum_{i=0}^{m_z} \eta_z g \varrho(x_i^z)\left(\extension{h}{l}(x_i^z) - \extension{h}{r}(x_i^z)\right)\\&
			= -\myint{U_z\setminus \SetA}{(\eta_zh \varrho)'g\,}{\lambda}{} + \myint{\SetA}{\eta_z g\left(\extension{h}{l}- \extension{h}{r}\right)\,\varrho}{\mS}{}.
		\end{align*} 
	\end{enumerate}
	
\end{proof}

We define the outer normal of a Lipschitz boundary to obtain a generalization of Proposition \ref{prop: calculation of Uz in $d=1$} under Condition \ref{cond: Lipschitz boundary}.

\begin{definition}[{\cite{ALTLAEnglish}, A8.5 item (3) (p.263)}]
	Let $\Lipschitz$ be open and bounded with Lipschitz boundary.
	In the notation of Definition \ref{def: Lipschitz boundary}, the outer normal to $\Lipschitz$ at the point $x \in \partial \Lipschitz$ is defined by
	\begin{align*}
		\nu_{\Lipschitz}(x) \vcentcolon = \left(1+\abs{\nabla \varphi_k(x)}^2\right)^{-\frac{1}{2}}\left(\sum_{i=1}^{l-1}\varphi_k^{(i)}e_i^k-e_d^k\right)
	\end{align*}
	where $\nabla \varphi_k(x)$ denotes the gradient in the weak sense. 
\end{definition}

\begin{prop}\label{prop: Calculation of Uk on d greater two}
	Assume that Condition \ref{cond: Lipschitz boundary} is satisfied. 
	Let $h \in \ContDiff{\R^d\setminus \SetA}{1}{b}$ and $g \in \ContDiff{\R^d}{\infty}{c}$. Further, let $(U_k)_{k \in \N}$ be the locally finite open cover of $\R^d$  and $(\eta_k)_{k \in \N}$ the corresponding partition of unity constructed in Lemma \ref{lem: Partition of unity Rd}.
	\begin{enumerate}[(i)]
		\item If $k \in \N$ with $U_k \cap \SetA= \emptyset$, then	for $i\in \{1, \ldots d\}$
		\begin{align*}
			\myint{U_k}{ \partial_{x_i}g \eta_k h \, \varrho(x)}{\lambda^d(x)}{}= - \myint{U_k}{g\ \partial_{x_i}\left(\eta_k h \varrho\right)(x)\,}{\lambda^d(x)}{}.
		\end{align*}
		\item If $k \in \N$ with $U_k \cap \SetA \neq \emptyset$, then for $\extension{h}{\Lipschitz}, \extension{h}{\LipschitzComplement}$ defined as in Lemma \ref{lem: hplus and hminus} it holds 
		\begin{align*}
			\myint{U_k\setminus\SetA}{\partial_{x_i}g \eta_k h \,\varrho(x)}{\lambda^d(x)}{} = - &\myint{U_k\setminus \SetA}{g\ \partial_{x_i}\left(\eta_k h \varrho\right)(x)\,}{\lambda^d(x)}{}\\+ &\myint{\SetA}{g \eta_k  \left(\extension{h}{\Lipschitz}-\extension{h}{\LipschitzComplement}\right)(x)\eucscalar{\nu_{\Lipschitz}}{e_i}(x)\, \varrho}{H^{d-1}(x)}{}.
		\end{align*}
	\end{enumerate}
\end{prop}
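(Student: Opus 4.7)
The plan is to treat the two parts separately: part (i) is a direct application of integration by parts, while part (ii) reduces to applying the Gauss--Green (divergence) formula on each of the two Lipschitz subdomains into which $U_k\setminus \SetA$ splits. The ingredient that eliminates all spurious boundary contributions is the fact that $\eta_k$ has compact support inside $U_k$.

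For part (i), since $U_k\cap \SetA=\emptyset$, the restriction of $h$ to $U_k$ lies in $\ContDiff{U_k}{1}{b}$, so $\eta_k h\varrho$ is continuously differentiable on $U_k$ and has compact support in $U_k$ (because $\supp{\eta_k}\subseteq U_k$ is compact). Standard integration by parts against $g\in \ContDiff{\R^d}{\infty}{c}$ then gives the claimed identity: the boundary term on $\partial U_k$ vanishes because $\eta_k$ vanishes there.

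For part (ii), I first use $\SetA=\partial \Lipschitz$ to write $U_k\setminus \SetA$ as the disjoint union of $U_k\cap \Lipschitz$ and $U_k\cap \LipschitzComplement$. In the local chart provided by Definition \ref{def: Lipschitz boundary}, both pieces are Lipschitz domains: the shared boundary part is the graph $\{v_d=\varphi_k(\vdimlower)\}$, and the remaining pieces of the boundary are portions of $\partial U_k$, which are flat. By Lemma \ref{lem: hplus and hminus}, $h$ extends continuously to $\overline{U_k\cap \Lipschitz}$ with boundary value $\extension{h}{\Lipschitz}$ and to $\overline{U_k\cap \LipschitzComplement}$ with boundary value $\extension{h}{\LipschitzComplement}$; moreover, $h$ has bounded derivatives on each side, so the vector field $g\eta_k h\varrho\, e_i$ lies in $W^{1,1}$ of each Lipschitz subdomain. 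Hence the Gauss--Green formula for Lipschitz domains applies on each piece.

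I then apply Gauss--Green on $U_k\cap \Lipschitz$ and $U_k\cap \LipschitzComplement$ separately. The compact support of $\eta_k$ in $U_k$ kills the contributions from $\partial U_k$ in both identities, leaving only the shared piece $\SetA\cap U_k$. The outer normal of $U_k\cap \Lipschitz$ along $\SetA\cap U_k$ is $\nu_{\Lipschitz}$, while that of $U_k\cap \LipschitzComplement$ is $-\nu_{\Lipschitz}$. Summing the two identities and identifying the two one-sided traces of $h$ on $\SetA\cap U_k$ with $\extension{h}{\Lipschitz}$ and $\extension{h}{\LipschitzComplement}$ produces precisely the formula in the statement.

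The main technical hurdle will be the careful verification that the Gauss--Green formula is applicable on the two subdomains and, above all, the identification of the classical boundary traces with the pointwise one-sided limits $\extension{h}{\Lipschitz}$ and $\extension{h}{\LipschitzComplement}$ constructed in Lemma \ref{lem: hplus and hminus}; the continuity up to the boundary established in the proof of that lemma, together with standard trace theory for $W^{1,\infty}$-functions on Lipschitz domains, should make this identification routine (at least $H^{d-1}$-a.e.\xspace on $\SetA\cap U_k$).
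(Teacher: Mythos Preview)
Your approach is correct but genuinely different from the paper's. For part (ii) the paper does \emph{not} apply Gauss--Green directly on $U_k\cap \Lipschitz$ and $U_k\cap\LipschitzComplement$. Instead it passes to the local chart $T_k$ and introduces $\varepsilon$-shifted slabs $Z_{k,\varepsilon}^1=\bigcup_{v\in I_k}\{v\}\times(\varphi_k(v)+\varepsilon,\varphi_k(v)+h_k)$ and $Z_{k,\varepsilon}^2$ that stay away from the graph, applies the weak Gauss theorem there (where $h\circ T_k$ is genuinely $C^1$), computes the resulting boundary integrals explicitly over $I_k$, and then lets $\varepsilon\to 0$ using dominated convergence; the one-sided extensions $\extension{h}{\Lipschitz}$, $\extension{h}{\LipschitzComplement}$ appear as the pointwise limits of $h(T_k(s,\varphi_k(s)\pm\varepsilon))$. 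Your route is more conceptual and shorter once the machinery is in place, but it shifts the burden onto verifying (a) that $U_k\cap \Lipschitz$ and $U_k\cap\LipschitzComplement$ are themselves Lipschitz domains (the corners where $\partial I_k$ meets the graph need a word, though the compact support of $\eta_k$ in $U_k$ lets you dodge this by shrinking), and (b) that the $W^{1,1}$-trace of $g\eta_k h\varrho$ on $\SetA\cap U_k$ agrees $H^{d-1}$-a.e.\ with $g\eta_k\varrho\,\extension{h}{\Lipschitz}$ and $g\eta_k\varrho\,\extension{h}{\LipschitzComplement}$, respectively---which you rightly flag. The paper's $\varepsilon$-regularization is essentially a hands-on proof of exactly that trace identification, trading slickness for self-containment.
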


\begin{proof}
	Let $h \in \ContDiff{\R^d\setminus \SetA}{1}{b}$ and $g \in \ContDiff{\R^d}{\infty}{c}$.
	\begin{enumerate}[(i)]
		\item Let $k \in \N$ with $U_k \cap \SetA= \emptyset$ and $i \in \{1,\ldots, d\}$. Then, $h \varrho \eta_k \in \ContDiff{U_k}{1}{c}\subseteq H^{1,2}(U_k)$  and the weak Gauss Theorem (compare \cite{ALTLAEnglish}, A8.8 (p.270-271)) yields
		\begin{align*}
			\myint{U_k}{ \partial_{x_i}g\ \eta_k h\, \varrho(x)}{\lambda^d(x)}{}= - &\myint{U_k}{g\ \partial_{x_i}\left(\eta_k h \varrho\right)(x)\,}{\lambda^d(x)}{}\\
			+ &\myint{ \partial U_k}{g \eta_k h \eucscalar{\nu_{\Lipschitz}}{e_i}(x)\,\varrho}{H^{d-1}(x)}{}.
		\end{align*}
		\item  Let $k \in \N$ with $U_k \cap \SetA\neq \emptyset$. Then there exists an orthonormal basis $B_k = \{e_1^k, \ldots e_d^k\}$, $h_k \in \R_{>0}$, an open cuboid $ I_k \vcentcolon=\bigtimes_{j=1}^{d-1}(a_j^k, b_j^k)$ and $\varphi_k \vcentcolon \R^{d-1} \to \R$ Lipschitz continuous such that 
		\begin{align*}
			U_k = \left\{\sum_{j=1}^{d} v_j e_j^k \in \R^d \vcentcolon v_{\textbf{.} d-1} \in I_k \text{ and }\abs{v_d-\varphi_k(v_{\textbf{.} d-1})}<h_k\right\}
		\end{align*}
		and $\SetA\cap U_k =  \left\{\sum_{j=1}^{d-1} v_j e_j^k + (\varphi_k(v_{\textbf{.} d-1})) e_d^k\in \R^d  \vcentcolon v_{\textbf{.} d-1} \in I_k\right\}.$
		For $0<\eps<h_k$ define 
		$Z_{k, \varepsilon}^1\vcentcolon = \bigcup_{v \in I_k} \{v\} \times (\varphi_k(v)+ \eps, \varphi_k(v)+ h_k)$,
		$Z_{k, \varepsilon}^2  \vcentcolon =\bigcup_{v \in I_k} \{v\} \times (\varphi_k(v)-h_k, \varphi_k(v)-\eps)$ and 
		$R_{k, \varepsilon} \vcentcolon =\bigcup_{v \in I_k} \{v\} \times [\varphi_k(v)-\eps, \varphi_k(v)+\eps]$.
		\\Then $Z_{k, \eps}^1 \cupdot Z_{k, \eps}^2 \uparrow \bigcup_{v \in I_k} \{v\} \times ((\varphi_k(v)-h_k, \varphi_k(v)+h_k)\setminus \{\varphi_k(v)\}) = \vcentcolon Z_k$. 
		
		With the substitution formula and Lebesgue dominated convergence we calculate for $i \in \{1, \ldots, d\}$
		\begin{align}
			&\myint{U_k\setminus \SetA}{\partial_{x_i}g\ \eta_k h \varrho(x)\,}{\lambda^d(x)}{}
			= \myint{Z_k}{\partial_{x_i}g\ \eta_k h \varrho\left(T_k(v)\right)\,}{\lambda^d(v)}{}\nonumber
			\\= &\lim_{\eps \to 0} \myint{Z_{k, \eps}^1 \cupdot Z_{k, \eps}^2}{\partial_{x_i}g\ \eta_k h \varrho\left(T_k(v)\right)\,}{\lambda^d(v)}{}, \text{ where $T_k(v) = \sum_{j=1}^d v_j e_j^k$ for $v \in \R^d$.} \label{eq: thm calculation of Uk Zerlegung mit LDC}
		\end{align}
		For some fixed $0<\eps<h_k$  we calculate with the weak Gauss Theorem 
		\begin{align}
			&\myint{Z_{k, \eps}^1 \cupdot Z_{k, \eps}^2}{\partial_{x_i}g \eta_k h \varrho\left(T_k(v)\right)\,}{\lambda^d(v)}{}
			\nonumber\\
			&= -  \myint{Z_{k, \eps}^1 \cupdot Z_{k, \eps}^2}{g \circ T_k(v) \left(\sum_{j=1}^d (e_j^k)_i \partial_{v_j}(\eta_k h \varrho \circ T_k) \right)(v)\,}{\lambda^d(v)}{}\label{eq: thm calculation of Uk Calculation with boundary term 1}\\
			&+   \sum_{l=1}^2\sum_{j=1}^d (e_j^k)_i \myint{\partial Z_{k, \eps}^l}{(\eta_k h \varrho g) \circ T_k(v)\eucscalar{\nu_{Z_{k, \eps}^l}}{e_j^k}(v)\,}{H^{d-1}(v)}{}\label{eq: thm calculation of Uk Calculation with boundary term 2} .
		\end{align}
		Lebesgue dominated convergence and  the substitution formula allow to represent Equation \eqref{eq: thm calculation of Uk Calculation with boundary term 1} in the limit as 
		\begin{align}
			&-\lim_{\eps \to 0}\myint{Z_{k, \eps}^1 \cupdot Z_{k, \eps}^2}{g \circ T_k(v) \left(\sum_{j=1}^d (e_j^k)_i \partial_{v_j}(\eta_k h \varrho \circ T_k) \right)(v)\,}{\lambda^d(v)}{}\nonumber\\
			= &- \myint{U_k\setminus \SetA}{g(x) \partial_{x_i}(\eta_k h \varrho )(x)\,}{\lambda^d(x)}{}\label{eq: thm calculation of UK haupt term done}.
		\end{align}
		Considering the boundary integrals in Equation \eqref{eq: thm calculation of Uk Calculation with boundary term 2}, we note that the integrand is zero on $\partial Z_{k, \eps}^1 \cap \partial U_k$ and $\partial Z_{k, \eps}^2 \cap \partial U_k$.
		We obtain for the integral over $\partial Z_{k, \eps}^1$
		\begin{align*}
			&\sum_{j=1}^d (e_j^k)_i \myint{\partial Z_{k, \eps}^1}{(\eta_k h \varrho g) \circ T_k(v)\eucscalar{\nu_{Z_{k, \eps}^l}}{e_j^k}(v)\,}{H^{d-1}(v)}{}\\=
			&\myint{I_k}{\eta_k h \varrho g(T_k(s, \varphi_k(s)+ \eps))\left(\sum_{j=1}^{d-1}  (e_j^k)_i \partial_{v_j} \varphi_k (s) - (e_d^k)_i\right)\,}{\lambda^{d-1}(s)}{}
		\end{align*}
		In the limit we have with Lebesgue dominated convergence 
		\begin{align}
			&\lim_{\eps \to 0}\myint{I_k}{\eta_k h \varrho g(T_k(s, \varphi_k(s)+ \eps))\left(\sum_{j=1}^{d-1}  (e_j^k)_i \partial_{v_j} \varphi_k (s) - (e_d^k)_i\right)\,}{\lambda^{d-1}(s)}{} \nonumber\\
			&= \myint{I_k}{\eta_k \extension{h}{\Lipschitz} \varrho g (T_k(s, \varphi_k(s)) \left(\sum_{j=1}^{d-1}  (e_j^k)_i \partial_{v_j} \varphi_k (s) - (e_d^k)_i\right)\,}{\lambda^{d-1}(s)}{}\nonumber\\
			&=  \myint{\SetA}{\eta_k \extension{h}{\Lipschitz} g (x) \eucscalar{\nu_{\Lipschitz}}{e_i}(x)\,\varrho(x)}{H^{d-1}(x)}{}.\label{eq: thm calculation of UK boudary term 1 done}
		\end{align}
		where  $\extension{h}{\Lipschitz}$ is defined as in Lemma \ref{lem: hplus and hminus}. The computation for Equation \eqref{eq: thm calculation of Uk Calculation with boundary term 2} is similar, reflecting the vector $e_d^k$ with the transformation $S_k(v) = \sum_{j=1}^{d-1}v_j e_j - v_d e_d$
		for $v \in \R^d$ and taking inner derivatives into account. We obtain
		\begin{align*}
			&\sum_{j=1}^d (e_j^k)_i \myint{\partial Z_{k, \eps}^2}{(\eta_k h \varrho g) \circ T_k(v)\eucscalar{\nu_{Z_{k, \eps}^2}}{e_j^k}(v)\,}{H^{d-1}(v)}{}\\
			= &\myint{I_k}{\eta_k h \varrho g(T_k(S_k(s, -\varphi_k(s)+\eps)))  \left(-\sum_{j=1}^{d-1}  (e_j^k)_i \partial_{v_j} \varphi_k (s) + (e_d^k)_i\right)\,}{\lambda^{d-1}(s)}{}\\
			= &\myint{I_k}{\eta_k h \varrho g(T_k(s, \varphi_k(s)-\eps)) \left(-\sum_{j=1}^{d-1}  (e_j^k)_i \partial_{v_j} \varphi_k (s) + (e_d^k)_i\right)\,}{\lambda^{d-1}(s)}{}.
		\end{align*}
		Analogously it holds
		\begin{align}
			&\lim_{\eps \to 0}\myint{I_k}{\eta_k h \varrho g(T_k(s, \varphi_k(s)-\eps)) \left(-\sum_{j=1}^{d-1}  (e_j^k)_i \partial_{v_j} \varphi_k (s) + (e_d^k)_i\right)\,}{\lambda^{d-1}(s)}{}\nonumber\\
			&= \myint{I_k}{\eta_k \extension{h}{\LipschitzComplement} \varrho g(T_k(s, \varphi_k(s))) \left(-\sum_{j=1}^{d-1}  (e_j^k)_i \partial_{v_j} \varphi_k (s) + (e_d^k)_i\right)\,}{\lambda^{d-1}(s)}{}\nonumber\\
			&= -\myint{\SetA}{\eta_k \extension{h}{\LipschitzComplement} \varrho g (x) \eucscalar{\nu_{\Lipschitz}}{e_i}(x)\,}{H^{d-1}(x)}{}\label{eq: thm calculation of UK boudary term 2 done}
		\end{align}
		for $\extension{h}{\LipschitzComplement}$ defined as in Lemma \ref{lem: hplus and hminus}.
		Putting together Equations \eqref{eq: thm calculation of Uk Zerlegung mit LDC}, \eqref{eq: thm calculation of UK haupt term done}, \eqref{eq: thm calculation of UK boudary term 1 done} and \eqref{eq: thm calculation of UK boudary term 2 done} we have as a result
		\begin{align*}
			&\myint{U_k\setminus\SetA}{\partial_{x_i}g \eta_k h(x) \,\varrho(x)}{\lambda^d(x)}{}  
			= - \myint{U_k\setminus \SetA}{g(x) \partial_{x_i}(\eta_k h \varrho)(x)\,}{\lambda^d(x)}{}\\
			&+ \myint{\SetA}{\eta_k g (x) (\extension{h}{\Lipschitz}-\extension{h}{\LipschitzComplement})(x) \eucscalar{\nu_{\Lipschitz}}{e_i}(x)\,\varrho(x)}{H^{d-1}(x)}{}.
		\end{align*}
	\end{enumerate}
\end{proof}

After this preparation, we are in the position to compute the generator $L^{\varrho}$ of the Dirichlet form $(\Erho, D(\Erho))$ on $\ContDiff{\R}{0}{c} \cap \ContDiff{\R \setminus\SetA}{2}{b} \subseteq L^2(\R^d, \measdens{\mu}{\varrho})$ under Condition \ref{cond: d is one} for $d=1$ and under the Condition \ref{cond: Lipschitz boundary} for $d \geq 2$.

\begin{thm}\label{thm: Calculation Generator d greaterequal two}
	Let $(\Erho, D(\Erho))$ be given as in \eqref{defn: Erho} and assume that Condition \ref{cond: Lipschitz boundary} is satisfied. Let $f \in \ContDiff{\R^d}{0}{c} \cap \ContDiff{\R^d \setminus \SetA}{2}{b}\subseteq \Ltwomurho$.
	Then $f \in D(L^{\varrho})$ and 
	\begin{equation*}
		L^{\varrho}f (x) = \mathbbm{1}_{\R^d\setminus \SetA}(x)\left(\Laplace f + \eucscalar{\nabla f}{\nabla \ln(\varrho)}\right)(x) + \mathbbm{1}_{\SetA}(x)\left(\extension{\left(\partial_{\nu_{\Lipschitz}}f\right)}{\LipschitzComplement}-\extension{\left(\partial_{\nu_{\Lipschitz}}f\right)}{\Lipschitz}\right)(x),
	\end{equation*}
	$ x \in \R^d$ is a $\murho$-version of $L^{\varrho}f \in \Ltwomurho$, where $(L^{\varrho}, D(L^{\varrho}))$ denotes the generator associated to $(\Erho, D(\Erho))$.
\end{thm}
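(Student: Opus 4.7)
The argument follows the standard characterization of the generator of a symmetric Dirichlet form: we need to exhibit a $\murho$-version of a function $u \in \Ltwomurho$ satisfying $\Erho(f,g) = -(u,g)_{\Ltwomurho}$ for every $g$ in a dense subset of $D(\Erho)$; then $f \in D(L^\varrho)$ with $L^\varrho f = u$. By regularity it is enough to take $g \in D = \ContDiff{\R^d}{\infty}{c}$. As a preliminary step one verifies $f \in D(\Erho)$: since $f$ is continuous across $\SetA$, compactly supported, and has bounded derivatives on $\R^d \setminus \SetA$, a standard mollification argument yields an approximating sequence in $D$ which is Cauchy in the $\Erho_1$-norm. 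In particular, the weak derivatives of $f$ agree $\lambda^d$-a.e.\ on $\R^d$ with the classical derivatives on $\R^d \setminus \SetA$.

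The core of the proof is the localisation of $\Erho(f,g)$ through the partition of unity $(\eta_k)_{k \in \N}$ subordinate to the locally finite cover $(U_k)_{k \in \N}$ provided by Lemma \ref{lem: Partition of unity Rd}. Since $g$ has compact support and the cover is locally finite, only finitely many summands in
\begin{equation*}
\Erho(f,g) = \sum_{k \in \N}\sum_{i=1}^d \myint{\R^d \setminus \SetA}{\eta_k\,\partial_{x_i}f\,\partial_{x_i}g\,\varrho}{\lambda^d}{}
\end{equation*}
are nonzero. The hypothesis on $f$ yields $\partial_{x_i}f \in \ContDiff{\R^d \setminus \SetA}{1}{b}$, so Proposition \ref{prop: Calculation of Uk on d greater two} applies with $h = \partial_{x_i}f$ on each patch. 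Indices $k$ with $U_k \cap \SetA = \emptyset$ produce only the bulk term $-\int_{U_k} g\,\partial_{x_i}(\eta_k\,\partial_{x_i}f\,\varrho)\,d\lambda^d$, while indices with $U_k \cap \SetA \neq \emptyset$ contribute, in addition, a boundary integral over $\SetA$ involving the jump $\extension{(\partial_{x_i}f)}{\Lipschitz} - \extension{(\partial_{x_i}f)}{\LipschitzComplement}$ defined via Lemma \ref{lem: hplus and hminus}.

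Next I would recombine. Applying the product rule $\partial_{x_i}(\eta_k\,\partial_{x_i}f\,\varrho) = \partial_{x_i}\eta_k\,\partial_{x_i}f\,\varrho + \eta_k\,\partial_{x_i}^2 f\,\varrho + \eta_k\,\partial_{x_i}f\,\partial_{x_i}\varrho$, summing over $k$ and $i$, and using that $\sum_k \eta_k \equiv 1$ locally (whence $\sum_k \partial_{x_i}\eta_k \equiv 0$), the bulk contributions telescope to
\begin{equation*}
-\myint{\R^d \setminus \SetA}{g\left(\Laplace f + \eucscalar{\nabla f}{\nabla \ln \varrho}\right)\varrho}{\lambda^d}{},
\end{equation*}
where we used $\nabla \varrho = \varrho\,\nabla \ln \varrho$ because $\varrho > 0$. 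The directional sum in the boundary contributions collapses via $\sum_{i=1}^d \eucscalar{\nu_{\Lipschitz}}{e_i}\,\partial_{x_i}f = \partial_{\nu_{\Lipschitz}}f$, and $\sum_k \eta_k \equiv 1$ on $\SetA$ together with $\mS = H^{d-1}\vert_{\SetA}$ from Condition \ref{cond: Lipschitz boundary} combine the boundary pieces into $\int_{\SetA} g\,\bigl(\extension{(\partial_{\nu_{\Lipschitz}}f)}{\Lipschitz} - \extension{(\partial_{\nu_{\Lipschitz}}f)}{\LipschitzComplement}\bigr)\,\varrho\,dH^{d-1}$. Recalling $\mu = \lambda^d + \mS$, this identifies $\Erho(f,g) = -(u,g)_{\Ltwomurho}$ with $u$ precisely the claimed expression for $L^\varrho f$. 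Finally, $u \in \Ltwomurho$ is checked directly: the bulk piece is bounded on the compact support of $f$ (the derivatives of $f$ are bounded on $\R^d \setminus \SetA$ and $\varrho > 0$ is $\mathcal{C}^1$), and the boundary piece is bounded on the compact set $\SetA = \partial \Lipschitz$, where $\mS$ is finite by Condition \ref{cond: Lipschitz boundary}. I expect the main obstacle to be the coordinate-wise bookkeeping in recombining the boundary integrals into $\partial_{\nu_{\Lipschitz}}f$, together with tracking the sign convention of the outer normal so that the jump appears in the orientation $\extension{(\partial_{\nu_{\Lipschitz}}f)}{\LipschitzComplement} - \extension{(\partial_{\nu_{\Lipschitz}}f)}{\Lipschitz}$ stated in the theorem.
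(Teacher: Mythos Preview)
Your proposal is correct and follows essentially the same route as the paper: localise via the partition of unity from Lemma \ref{lem: Partition of unity Rd}, apply Proposition \ref{prop: Calculation of Uk on d greater two} with $h=\partial_{x_i}f$ on each patch, and recombine using $\sum_k\eta_k\equiv 1$ (and $\sum_k\partial_{x_i}\eta_k\equiv 0$) to obtain the bulk and boundary pieces. The only point the paper makes more explicit than your sketch is the verification that $f\in D(\Erho)$: before mollifying, it first applies Proposition \ref{prop: Calculation of Uk on d greater two} with $\varrho=\mathbbm{1}$ to identify the weak derivatives of $f$ as $\mathbbm{1}_{\R^d\setminus\SetA}\partial_{x_i}f$ (the boundary term vanishes because $f$ is continuous across $\SetA$, so $\extension{f}{\Lipschitz}=\extension{f}{\LipschitzComplement}$), and only then runs the mollification to conclude $\Erho$-Cauchyness.
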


\begin{proof}
	Let $(U_k)_{k \in \N}$ be the locally finite open cover of $\R^d$ and $(\eta_k)_{k \in \N}$ the corresponding partition of unity constructed in Lemma \ref{lem: Partition of unity Rd}.
	We first show that $f \in D(\Erho)$. It holds $f \in L^p(\R^d, \murho)$ for all $1 \leq p \leq \infty$. We fix $i\in \{1, \ldots d\}$.
	For any $g \in \ContDiff{\R^d}{\infty}{c}$ holds with Proposition \ref{prop: Calculation of Uk on d greater two} for $\varrho=\mathbbm{1}$
	\begingroup
	\allowdisplaybreaks
	\begin{align*}
		&\myint{\R^d}{\partial_{x_i}g f\,}{\lambda^d}{} 
		=- \sum_{\substack{k \in \N \\ U_k \cap \SetA = \emptyset}}\myint{U_k}{g\partial_{x_i}\left(\eta_k f\right)\,}{\lambda^d}{}
		- \sum_{\substack{k \in \N \\ U_k \cap \SetA \neq \emptyset}}\myint{U_k\setminus \SetA}{g\partial_{x_i}\left(\eta_k f\right)\,}{\lambda^d}{}\\
		+&\sum_{\substack{k \in \N \\ U_k \cap\SetA \neq \emptyset}}\myint{\SetA}{\eta_k g \left(f^{\text{e}}_{\Lipschitz}-f^{\text{e}}_{\LipschitzComplement}\right)\eucscalar{\nu_{\Lipschitz}}{e_i}\,}{H^{d-1}}{}
		= \myint{\R^d}{g \mathbbm{1}_{\R^d\setminus \SetA}\partial_{x_i}f\,}{\lambda^d}{},
	\end{align*}
	\endgroup
	because $f$ is continuous on $\R^d$ and $(\eta_k)_{k \in \N}$ form a partition of unity. Since $\mathbbm{1}_{\R^d\setminus \SetA}\partial_{x_i}f \in \mathscr{L}^p(\R^d, \lambda^d)$ we conclude $f \in H^{1,p}(\R^d)$ for $1 \leq p \leq \infty$.
	
	Let $(\Psi_{\frac{1}{n}})_{n \in \N}$ be a standard approximate identity and set 
	\begin{align*}
		f_n \vcentcolon \R^d \to \R, 
		x \mapsto \myint{\R^d}{\Psi_{\frac{1}{n}}(x-y)f(y)\,}{\lambda^d(y)}{}
	\end{align*} 
	for all $n \in \N$. 
	Then, $(f_n)_{n \in \N} \subseteq \ContDiff{\R^d}{\infty}{c}$ with $\supp{f_n}, \supp{f} \subseteq K$ for all $n \in\N$ for some $K \subseteq \R^d$ compact . Thus,
	\begin{align}\label{eq: thm Calculation Generator conv for f in DE}
		0 \leq \norm{f-f_n}_{\Ltwomurho}^2 
		\leq C_{\varrho, K}(\sup_{x \in \R^d}\abs{f_n(x)-f(x)})^2  \murho(K) \to 0 \text{ as }n \to \infty
	\end{align}
	where $C_{\varrho, K} \in \R$ is an upper bound of $\varrho$ on $K$. 
	Additionally, for all $x \in \R^d$
	\begin{align*}
		\partial_{x_i}f_n(x)
		= - \myint{\R^d}{\partial_{y_i}\Psi_{\frac{1}{n}}(x-y) f(y)\,}{\lambda^d(y)}{}
		= \myint{\R^d}{\Psi_{\frac{1}{n}}(x-y) \mathbbm{1}_{\R^d\setminus \SetA}(y) \partial_{y_i}f(y)\,}{\lambda^d(y)}{}.
	\end{align*}
	This shows that $\partial_{x_i}f_n$ is a continuous representative of $\Psi_{\frac{1}{n}}\ast \mathbbm{1}_{\R^d \setminus \SetA}\partial_{x_i}f$ for all $n \in \N$. In particular, 
	\begin{align*}
	\lim_{n \to \infty}\norm{\partial_{x_i}f_n-\mathbbm{1}_{\R^d\setminus \SetA}\partial_{x_i}f}_{L^2(\R^d,\lambda^d)} = 0
\end{align*}
	what implies that the limit $\lim_{n,m \to \infty}\myint{\R^d}{(\partial_{x_i} f_n - \partial_{x_i}f_m)^2\,\varrho}{\lambda^d}{}=0$.
	Consequently, $(f_n)_{n \in \N}$ is $\Erho$-Cauchy and with Equation \eqref{eq: thm Calculation Generator conv for f in DE} we conclude $f \in D(\Erho)$.
	
	Let $g \in \ContDiff{\R^d}{\infty}{c}$. 
	We compute applying Proposition \ref{prop: Calculation of Uk on d greater two} 
	\begingroup
	\allowdisplaybreaks
	\begin{align*}
		&\Erho(f,g)
		= \sum_{i=1}^d\left(\sum_{\substack{k \in \N\\ U_k \cap \SetA\\ = \emptyset}}\myint{U_k}{\eta_k \partial_{x_i}f \partial_{x_i}g\,\varrho}{\lambda^d}{} + \sum_{\substack{k \in \N \\ U_k \cap \SetA \neq \emptyset}}\myint{U_k\setminus \SetA}{\eta_k \partial_{x_i}f \partial_{x_i}g\,\varrho}{\lambda^d}{}\right)\\
		&=  -\sum_{i=1}^d\sum_{\substack{k \in \N\\ U_k \cap \SetA = \emptyset}} \myint{U_k}{g\ \partial_{x_i}\left(\eta_k \partial_{x_i}f \varrho\right)\,}{\lambda^d}{}-  \sum_{i=1}^d\sum_{\substack{k \in \N \\ U_k \cap\SetA \neq \emptyset}}\myint{U_k\setminus\SetA}{g\ \partial_{x_i}\left(\eta_k \partial_{x_i}f \varrho\right)\,}{\lambda^d}{}\\&
		+ \sum_{i=1}^d\sum_{\substack{k \in \N \\ U_k \cap \SetA \neq \emptyset}}\myint{\SetA}{\eta_k  g \left(\extension{\left(\partial_{x_i}f\right)}{\Lipschitz}-\extension{\left(\partial_{x_i}f\right)}{\LipschitzComplement}\right)\eucscalar{\nu_{\Lipschitz}}{e_i}\, \varrho}{H^{d-1}}{}\\
		&= -\sum_{i=1}^d \myint{\R^d\setminus \SetA}{g\ \partial_{x_i}\left(\partial_{x_i}f \varrho\right)\,}{\lambda^d}{}
		+ \sum_{i=1}^d\myint{\SetA}{g \left(\extension{\left(\partial_{x_i}f\right)}{\Lipschitz}-\extension{\left(\partial_{x_i}f\right)}{\LipschitzComplement}\right)\eucscalar{\nu_{\Lipschitz}}{e_i}\, \varrho}{H^{d-1}}{}\\
		&=  -\bigg(\myint{\R^d}{\mathbbm{1}_{\R^d\setminus \SetA}\left(\Laplace f + \eucscalar{\nabla f}{\ln(\varrho)}\right) g \, \varrho}{\lambda^d}{}
		+  \myint{\SetA}{\left(\extension{\left(\partial_{\nu_{\Lipschitz}}f\right)}{\LipschitzComplement} - \extension{\left(\partial_{\nu_{\Lipschitz}}f\right)}{\Lipschitz}\right)g\, \varrho}{H^{d-1}}{}\bigg).\\
	\end{align*}
	\endgroup
	We define 
	\begin{equation*}
		L^{\varrho}f (x) \vcentcolon= \mathbbm{1}_{\R^d\setminus \SetA}(x)\left(\Laplace f + \eucscalar{\nabla f}{\ln(\varrho)}\right)(x) + \mathbbm{1}_{\SetA}(x)\left(\extension{\left(\partial_{\nu_{\Lipschitz}}f\right)}{\LipschitzComplement}-\extension{\left(\partial_{\nu_{\Lipschitz}}f\right)}{\Lipschitz}\right)(x)
	\end{equation*}
	for $x \in \R^d$.
	Then, $	L^{\varrho}f \in \Lcaltwomurho$. Since $D\subseteq D(\Erho)$ is dense \withrespectto $\left(\Erho_1\right)^{\frac{1}{2}}$, a standard approximation argument and the fact that $\Erho$ is continuous in the second component yield $\Erho(f,g) = (-L^{\varrho}f, g)_{\Ltwomurho}$ for all $g \in D(\Erho)$.
\end{proof}

For $d=1$, we obtain a similar statement. 
\begin{thm}\label{thm: Calculation Generator d is one}
	Let $(\Erho, D(\Erho))$ be given as in \eqref{defn: Erho} and assume that Condition \ref{cond: d is one} is satisfied. Let $f \in \ContDiff{\R}{0}{c} \cap \ContDiff{\R \setminus\SetA}{2}{b}\subseteq \Ltwomurho$.
	Then $f \in D(L^{\varrho})$ and 
	\begin{equation*}
		L^{\varrho}f (x) = \mathbbm{1}_{\R\setminus \SetA}(x)\left(f'' + f' (\ln(\varrho))'\right)(x) + \mathbbm{1}_{\SetA}(x)\left(\extension{(f')}{r}-\extension{(f')}{l}\right)(x)
	\end{equation*}
	for $x \in \R$ is a $\murho$-version of $L^{\varrho}f \in L^2(\R, \murho)$, where $(L^{\varrho}, D(L^{\varrho}))$ denotes the generator associated to $(\Erho, D(\Erho))$.
\end{thm}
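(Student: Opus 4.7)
The plan is to mirror the proof of Theorem~\ref{thm: Calculation Generator d greaterequal two}, substituting the one-dimensional cover $(U_z)_{z \in \Z}$ with $U_z \vcentcolon= (z, z+2)$, its partition of unity $(\eta_z)_{z \in \Z}$, and Proposition~\ref{prop: calculation of Uz in $d=1$} in place of Proposition~\ref{prop: Calculation of Uk on d greater two}. I would first show that $f$ lies in $D(\Erho)$, then compute $\Erho(f, g)$ for test functions $g \in \ContDiff{\R}{\infty}{c}$, and finally extend the resulting identity by density of $D$ in $D(\Erho)$ to identify $L^\varrho f$.

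For the membership $f \in D(\Erho)$, the first move is to apply Proposition~\ref{prop: calculation of Uz in $d=1$} with $\varrho \equiv 1$ and $h = f$ to each pairing $\myint{\R}{\varphi' f}{\lambda}{}$, $\varphi \in \ContDiff{\R}{\infty}{c}$. Since $f$ is continuous on all of $\R$, the jumps $\extension{f}{l} - \extension{f}{r}$ vanish on $\SetA$, so the boundary contributions drop out and the weak derivative of $f$ is $\mathbbm{1}_{\R \setminus \SetA} f' \in L^p(\R, \lambda)$ for every $p \in [1, \infty]$. Mollifying $f$ with a standard approximate identity then produces a sequence $(f_n)_{n \in \N} \subseteq \ContDiff{\R}{\infty}{c}$ whose supports stay in a fixed compact $K \supseteq \supp{f}$; local finiteness of $\murho$ combined with uniform convergence $f_n \to f$ gives $f_n \to f$ in $\Ltwomurho$ as in \eqref{eq: thm Calculation Generator conv for f in DE}, while $f_n' \to \mathbbm{1}_{\R \setminus \SetA} f'$ in $L^2(\R, \measdens{\lambda}{\varrho})$, so $(f_n)$ is $\Erho$-Cauchy and $f \in D(\Erho)$.

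Next I would compute $\Erho(f, g) = \sum_{z \in \Z} \myint{U_z \setminus \SetA}{\eta_z f' g'\,\varrho}{\lambda}{}$ by applying Proposition~\ref{prop: calculation of Uz in $d=1$} with $h = f'$ in each summand. Using $(f' \varrho)' = \varrho(f'' + f'(\ln \varrho)')$ on $\R \setminus \SetA$, the bulk pieces regroup to $-\myint{\R \setminus \SetA}{g(f'' + f'(\ln \varrho)')\,\varrho}{\lambda}{}$ and the boundary pieces collect to $\myint{\SetA}{g(\extension{(f')}{l} - \extension{(f')}{r})\,\varrho}{\mS}{}$. Expressing the result as a single integral against $\varrho \mu = \measdens{\lambda}{\varrho} + \measdens{\mS}{\varrho}$ and matching with $-(L^\varrho f, g)_{\Ltwomurho}$ produces the stated formula for $L^\varrho f$, which extends from $g \in D$ to all $g \in D(\Erho)$ by continuity of $\Erho$ in the second argument together with density of $D$ in $D(\Erho)$.

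The one genuine point to check is that the candidate $L^\varrho f$ actually lies in $\Lcaltwomurho$. Its bulk part $\mathbbm{1}_{\R \setminus \SetA}(f'' + f'(\ln \varrho)')$ is bounded and supported in $\supp{f}$, so square integrability against $\measdens{\lambda}{\varrho}$ is automatic. The boundary part is where Condition~\ref{cond: d is one} is essential: it guarantees that $\SetA \cap \supp{f}$ is finite, and Lemma~\ref{lem: hplus and hminus}(i) supplies bounded one-sided limits $\extension{(f')}{r}, \extension{(f')}{l}$, so the term reduces to a finite weighted sum of point masses and is square integrable against $\measdens{\mS}{\varrho}$. Beyond this bookkeeping, I do not expect any further technical obstacle.
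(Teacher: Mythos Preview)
Your proposal is correct and follows essentially the same approach as the paper: the paper defines the cover $U_z=(z,z+2)$ with its partition of unity, refers back to the proof of Theorem~\ref{thm: Calculation Generator d greaterequal two} for $f\in D(\Erho)$, applies Proposition~\ref{prop: calculation of Uz in $d=1$} with $h=f'$ to compute $\Erho(f,g)$, and then extends by density. Your additional remark that Condition~\ref{cond: d is one} forces $\SetA\cap\supp{f}$ to be finite is exactly the point ensuring $L^\varrho f\in\Lcaltwomurho$, and is left implicit in the paper.
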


\begin{proof}
	We define $U_z \vcentcolon = (z, z+2)$ for $z \in \Z$ and denote by $(\eta_z)_{z \in \Z}$ the corresponding partition of unity.
	The same arguments as in the proof of Theorem \ref{thm: Calculation Generator d greaterequal two} show that  $f \in D(\Erho)$.
	Let $g \in \ContDiff{\R^d}{\infty}{c}$. 
	We compute applying Proposition \ref{prop: calculation of Uz in $d=1$}
	\begingroup
	\allowdisplaybreaks
	\begin{align*}
		&\Erho(f,g)
		= \sum_{\substack{z \in \Z\\ U_z \cap \SetA = \emptyset}}\myint{U_z}{\eta_z f'g'\,\varrho}{\lambda^d}{} + \sum_{\substack{z \in \Z \\ U_z \cap \SetA \neq \emptyset}}\myint{U_z\setminus \SetA}{\eta_k f'g'\,\varrho}{\lambda}{}
		=  -\sum_{\substack{z \in \Z\\ U_z \cap \SetA = \emptyset}} \myint{U_z}{\left(\eta_z f' \varrho\right)'g\,}{\lambda}{}\\&
		-  \sum_{\substack{z \in \Z \\ U_z \cap \SetA \neq \emptyset}}\myint{U_z\setminus \SetA}{\left(\eta_z f' \varrho\right)'g\,}{\lambda}{}
		+ \sum_{\substack{z \in \Z \\ U_z \cap \SetA \neq \emptyset}}\myint{\SetA}{\eta_z g \left(\extension{\left(f'\right)}{l}-\extension{\left(f'\right)}{r}\right)\, \varrho}{\mS}{}= -\myint{\R\setminus\SetA}{g\ \left( f' \varrho\right)'\,}{\lambda}{}\\&
		+\myint{\SetA}{\left(\extension{\left(f'\right)}{l}-\extension{\left(f'\right)}{r}\right)g\, \varrho}{\mS}{}
		= -\bigg(\myint{\R\setminus\SetA}{\left( f''  + f'\ln(\varrho)'\right)g \,\varrho}{\lambda}{}
		+\myint{\SetA}{\left(\extension{\left(f'\right)}{r}- \extension{\left(f'\right)}{l}\right)g \,\varrho}{\mS}{}\bigg).
	\end{align*}
	\endgroup
	
	We define 
	\begin{equation*}
		L^{\varrho}f (x,y) = \mathbbm{1}_{\R\setminus\SetA}(x)\left(f''  + f'\ln(\varrho)'\right)(x) + \mathbbm{1}_{\SetA}(x)\left(\extension{\left(f'\right)}{r}-\extension{\left(f'\right)}{l}\right)(x).
	\end{equation*}
	Then, $	L^{\varrho}f \in \Lcaltwomurho$. Again, a standard argument shows that for all $g \in D(\Erho)$ it holds $\Erho(f,g) =\Ltwomurhoscalar{-L^{\varrho}f}{g}$.
\end{proof}

\begin{rem}
	For the choice $d=1, A=\{0\}, \mS=\delta_0$ and $\varrho=\mathbbm{1}$, we know from Proposition \ref{prop: domain if $d=1$} that $D(\ErhoOne)=H^{1,2}(\R)$. In the Bachelor's thesis \cite{Viktoria} it is shown that under the these Conditions on $d, \SetA$ and $\mS$ and  the density $\varrho>0$ is infintely often differentiable, the domain of the generator $(L^{\varrho}, D(L^{\varrho}))$ is given by
	\begin{align*}
		D(L^{\varrho})\vcentcolon = \{f \in D(\Erho)\cap H^{2,2}_{\text{loc}}(\R\setminus\{0\})\vcentcolon f^{(2)}+f^{(1)}(\ln(\varrho))' \in L^2(\R, \measdens{\lambda}{\varrho})\}
	\end{align*}
	and $L^{\varrho}f(x)= \mathbbm{1}_{\R\setminus\{0\}}(f^{(2)}+f^{(1)}(\ln(\varrho))')+\mathbbm{1}_{\{0\}}\left(\lim_{\varepsilon \to 0}\left(\widetilde{f}^{(1)}(\varepsilon)-\widetilde{f}^{(1)}(-\varepsilon)\right)\right)$, where $f^{(2)}$ denotes the weak second derivative of $f \in H^{2,2}_{\text{loc}}(\R\setminus\{0\})$ and $\widetilde{f}^{(1)}$ a continuous version of the first weak derivative $f^{(1)}\in H^{1,2}_{\text{loc}}(\R\setminus\{0\})$. However, we will not use this result in the sequel.
\end{rem}

	\section{The associated Markov process}

\subsection{Existence}
It is well known that there is a one-to-one correspondence between Markovian semigroups (denoted by $(T_t)_{t \geq 0})$ and Dirichlet forms. If a Dirichlet form is regular, \eg \cite{Fukushima}, Theorem 7.2.1 (p.380) states the existence of an associated Hunt process. We refer to \cite{Fukushima} for the definition of Markov process, Hunt process and diffusion.
The association of regular Dirichlet forms and Hunt processes is established through the corresponding Markovian semigroup. Some properties of the Dirichlet form imply properties of the associated process. In our case, given that $(\Erho, D(\Erho))$ defined in \eqref{defn: Erho} is strongly local (and under suitable conditions conservative), we are able to state existence of an associated Hunt process with continuous sample paths (and infinite lifetime). Before we formulate this existence theorem, we briefly look at some notions from potential theory.

\begin{rem}
	Let $(\mathcal{E}, D(\mathcal{E}))$ be a regular Dirichlet form.
	\begin{enumerate}[(i)]
		\item The definition of capacity can be found in \cite{Fukushima}, Section 2.1 (p.66). This definition corresponds to the one given in \cite{MaRoeckner}, Section III.2 (p.75) for the choice of $h=g=1$, see \cite{MaRoeckner}, Remark III.2.9(ii) and Exercise III.2.10 (p.78). We denote the capacity with respect to $(\mathcal{E}, D(\mathcal{E}))$ by $\text{Cap}_{\mathcal{E}}$.
		\item Let $B$ be a subset of $\R^d$. A statement depending on $x \in B$ is said to hold q.e.~on $B$ if there exists a set $N \subseteq B$ of zero capacity such that the statement is true for every $x \in B \setminus N$ . “\qe” is an abbreviation of “quasi-everywhere”, see \cite{Fukushima}, p.68.
		\item Let $u$ be an extended real valued function defined \qe on $\R^d$. We call u quasi continuous if there exists for any $\eps>0$ an open set $G \subseteq \R^d$ such that $\Capac{\mathcal{E}}{G}<\eps$ and the restriction of $u$ to $\R^d\setminus G$ is continuous, see \cite{Fukushima}, p.69. As in the proof of \cite{Fukushima}, Theorem 2.1.2 (p.69) one can show that if $u$ is quasi continuous, there exists a $\mathcal{E}$-nest $(F_k)_{k \in \N}$ such that $\restrictfunc{u}{F_k}$ is continuous for all $k \in \N$. For the definition of $\mathcal{E}$-nest, see e.g.~\cite{Fukushima}, p.69 where the underlying Dirichlet form is omitted in the notation and it is only referred to as nest. 
	\end{enumerate}
\end{rem}

\begin{thm}\label{thm: Existence of processs}
	There exists a diffusion process, \ie a strong Markov process with continuous sample paths denoted by
	\begin{align*}
		\Markov{\varrho} \vcentcolon = (\Omega, \mathcal{F}, (\mathcal{F}_t)_{t \geq 0}, (X_t)_{t \geq 0}, (\Theta_t)_{t \geq 0}, (P_x)_{x \in \R^d})
	\end{align*}
	with state space $\R^d$ associated to the Dirichlet $(\Erho, D(\Erho))$ defined in \eqref{defn: Erho}, i.e. for all ($\murho$--versions of) $f \in \Ltwomurho$ and all $t>0$ the function
	\begin{align*}
		\R^d \ni x \mapsto p_t f(x) \vcentcolon= \Expec{x}{}{f(X_t)} = \myint{\Omega}{f(X_t)\,}{P_x}{} \in [0, \infty]
	\end{align*}
	is a quasi continuous version of $T_tf$. In particular, $\Markov{\varrho}$ is $\murho$ symmetric, \ie
	\begin{align*}
		\myint{\R^d}{p_t f g\, \varrho}{\mu}{} = \myint{\R^d}{f p_t g\, \varrho}{\mu}{}
	\end{align*}
	for all $f,g \vcentcolon \R^d \to [0, \infty)$ measurable and all $t>0$. The process is up to equivalence (in the sense of \cite{Fukushima}, p.167) unique. Under the conditions of Proposition \ref{prop: E is recurrent}, $\Markov{\varrho}$ is additionally conservative, \ie the process has infinite lifetime. 
\end{thm}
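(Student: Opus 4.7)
The strategy is a direct application of abstract Dirichlet form theory: all the substantive work has been done in Proposition \ref{prop: Diri from regular and strongly local}, which established regularity and strong locality of $(\Erho, D(\Erho))$, and in Remark \ref{rem: Cinfcomp is dense in Ltwomurho}, which guarantees that $\murho$ is a Radon measure on $\R^d$. With these ingredients, the hypotheses of \cite{Fukushima}, Theorem 7.2.1 (p.~380) are satisfied, and that theorem produces an $\murho$-symmetric Hunt process on $\R^d$ whose transition operators $p_t f$ furnish quasi continuous $\murho$-versions of $T_t f$ for every $f \in \Ltwomurho$. Uniqueness up to equivalence in the sense of \cite{Fukushima}, p.~167, is then part of the general one-to-one correspondence between regular Dirichlet forms and equivalence classes of associated Hunt processes.

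Next, I would upgrade this Hunt process to a diffusion, \ie obtain continuous sample paths, by invoking \cite{Fukushima}, Theorem 4.5.1 (p.~154) together with the characterisation of diffusions among Hunt processes associated to regular Dirichlet forms through strong locality (the jumping and killing measures in the Beurling--Deny decomposition both vanish). The strong locality has been provided by Proposition \ref{prop: Diri from regular and strongly local}. For the claimed $\murho$-symmetry identity with non-negative measurable $f, g$, one proceeds by a monotone class argument: the identity holds on $\Ltwomurho$ as a consequence of the self-adjointness of $T_t$; one then approximates bounded measurable functions of compact support by $\Ltwomurho$-elements while replacing $T_t f$ by the quasi continuous representative $p_t f$, and finally extends to arbitrary non-negative measurable $f, g$ by monotone convergence.

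Finally, to establish conservativeness under the hypotheses of Proposition \ref{prop: E is recurrent}, I would combine the recurrence of $(\Erho, D(\Erho))$ proved there with the general implication \emph{recurrence} $\Rightarrow$ \emph{conservativeness}, \eg \cite{Fukushima}, Lemma 1.6.5 (p.~59). Conservativeness then translates to $p_t \mathbbm{1} = 1$ $\murho$-\almosteverywhere, which via the quasi continuity of $p_t \mathbbm{1}$ yields $P_x(X_t \in \R^d) = 1$ for quasi every $x \in \R^d$, that is, infinite lifetime. The main point of care, more a bookkeeping subtlety than a genuine obstacle, is reconciling the statement indexed by all $x \in \R^d$ with the fact that the general existence theory only secures such assertions q.e.; this is precisely the $\Erho$-exceptional set built into the notion of equivalence of Hunt processes in \cite{Fukushima}, and it is implicit in the uniqueness statement.
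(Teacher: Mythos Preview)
Your proposal is correct and follows essentially the same approach as the paper, namely a direct invocation of the abstract existence theory for processes associated to regular (strongly local) Dirichlet forms, together with the implication recurrence $\Rightarrow$ conservativeness. The only cosmetic differences are in the precise references: the paper cites \cite{Fukushima}, Theorem~7.2.2 (which already packages strong locality into the diffusion conclusion) and Exercise~4.5.1 for conservativeness, and appeals to \cite{MaRoeckner}, Exercise~IV.2.9 for the quasi continuity of $p_t f$, whereas you split the first step into Theorem~7.2.1 plus Theorem~4.5.1 and derive quasi continuity and symmetry by hand; the substance is identical.
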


\begin{proof}
	Follows from \cite{Fukushima}, Theorem 7.2.2 (p.380) and \cite{Fukushima}, Exercise 4.5.1 (p.187). Furthermore, \cite{MaRoeckner}, Exercise IV.2.9 (p.100) states that for all ($\murho$--versions of) $f \in \Ltwomurho$ and all $t>0$ the function $p_tf$ is a quasi continuous version of $T_tf$.
\end{proof}
\Wlog we assume that $(\mathcal{F}_t)_{t \geq 0}$ denotes the minimum completed admissible filtration, see \cite{Fukushima}, Theorem A.2.1 (p.389). For $x \in \R^d$, we denote the expectation \withrespectto the probability measure $P_x$ by $\Expec{x}{}{\cdot}$.

In the one-dimensional case, we can strengthen the result and have continuous and not only quasi continuous versions of $T_tf$ for $f \in \Ltwomurho$. To show this, we first prove two general lemmata. For $f \in D(\Erho)$, we denote by $\widetilde{f}$ the continuous $\lambda$-version of $f \in H^{1,2}_{\text{loc}}(\R)$, whose existence follows from Proposition \ref{prop: Domain is subset of Sob Space} and the Sobolev embedding Theorem \cite{AdamsSobSpaces}, Lemma 5.15 (p.107). In the following, we always refer to this lemma whenever a Sobolev embedding theorem is mentioned.
\begin{lem}\label{lem: cap zero}
	Let $(\mathcal{E}, D(\mathcal{E}))$ be any regular Dirichlet form on $L^2(\R, \nu)$ where $\nu$ is a positive Radon measure on $\R$ with supp$(\nu)=\R$. Moreover, let $\lambda$ be absolutely continuous with respect to $\nu$ (in notation $\lambda << \nu$). Furthermore, assume $D(\mathcal{E})\subseteq H^{1,2}_{\text{loc}}(\R)$ in the sense that: If $f \in \mathscr{L}^2(\R, \nu)$ is a $\nu$-version of $f \in D(\mathcal{E})$, then $f \in \mathscr{L}^2_{\text{loc}}(\R, \lambda)$ and the corresponding equivalence class of $f$ \withrespectto $\lambda$ is an element of $H^{1,2}_{\text{loc}}(\R)$. If for all $U \subseteq \R$ open and bounded there exists a constant $C_U$ such that
	\begin{align*}
		\norm{\restrictfunc{f}{U}}_{H^{1,2}(U)}^2 \leq C_U \mathcal{E}_1(f,f)
	\end{align*}
	for all $f \in D(\mathcal{E})$, then the following holds:
	\begin{enumerate}[(i)]
		\item Let $B \subseteq \R$. Then $\Capac{\mathcal{E}}{B}=0$ if and only if $B=\emptyset$. 
		\item Let $x \in \R$ arbitrary. Then there exists an open  and bounded neighbourhood of $x$ denoted by $U_x$ and a constant $C_{x}\in \R_{>0}$ such that
		\begin{align*}
			\Capac{\mathcal{E}}{\{y\}}\geq C_{x}\text{ for all }y \in U_x
		\end{align*}
		i.e. there exists a positive constant that uniformly bounds the Capacity of singleton sets from below for all singletons from $U_x$.
	\end{enumerate}
\end{lem}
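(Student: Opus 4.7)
The plan is to reduce both parts to a single core estimate bounding $\Capac{\mathcal{E}}{\{y\}}$ from below uniformly for $y$ in a bounded open interval, by combining the one-dimensional Sobolev embedding with the hypothesised coercivity bound $\norm{\restrictfunc{f}{U}}_{H^{1,2}(U)}^2 \leq C_U\,\mathcal{E}_1(f,f)$.

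For part (i), the implication $B=\emptyset \Rightarrow \Capac{\mathcal{E}}{B}=0$ is immediate from the definition. For the converse, suppose $B \neq \emptyset$ and fix $x_0 \in B$. By monotonicity of capacity it suffices to show $\Capac{\mathcal{E}}{\{x_0\}} > 0$. I would pick any bounded open neighbourhood $U$ of $x_0$ and invoke the one-dimensional Sobolev embedding $H^{1,2}(U) \hookrightarrow C^0(\overline{U})$ (see \cite{AdamsSobSpaces}, Lemma 5.15) to obtain a constant $K_U > 0$ such that every $u \in H^{1,2}(U)$ admits a continuous representative $\widetilde{u}$ with $\norm{\widetilde{u}}_{\text{sup}} \leq K_U \norm{u}_{H^{1,2}(U)}$. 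Now let $A \ni x_0$ be open and let $u \in D(\mathcal{E})$ satisfy $u \geq 1$ $\nu$-a.e.~on $A$. Because $\lambda << \nu$, the same holds $\lambda$-a.e.~on $A$, and passing to the continuous $\lambda$-representative $\widetilde{u}$ on $U$ (which exists thanks to the assumption $D(\mathcal{E})\subseteq H^{1,2}_{\text{loc}}(\R)$ combined with the Sobolev embedding), I obtain $\widetilde{u}\geq 1$ on $A\cap U$, hence $\widetilde{u}(x_0)\geq 1$. Combining this pointwise information with the Sobolev estimate and the coercivity hypothesis yields
\begin{align*}
    1 \;\leq\; \widetilde{u}(x_0) \;\leq\; K_U \norm{u}_{H^{1,2}(U)} \;\leq\; K_U \sqrt{C_U\,\mathcal{E}_1(u,u)},
\end{align*}
so $\mathcal{E}_1(u,u)\geq (K_U^2 C_U)^{-1}$. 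Taking the infimum first over such $u$ and then over open $A \ni x_0$ produces $\Capac{\mathcal{E}}{\{x_0\}} \geq (K_U^2 C_U)^{-1} > 0$.

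For part (ii), the very same chain of estimates applies at any $y \in U_x$ once $U_x$ is chosen as a fixed bounded open neighbourhood of $x$: indeed, $U_x$ is then also a bounded open neighbourhood of each such $y$, and the constants $K_{U_x}, C_{U_x}$ arising from the Sobolev embedding and from the coercivity hypothesis depend only on $U_x$, not on $y$. Setting $C_x := (K_{U_x}^2 C_{U_x})^{-1}$ therefore yields $\Capac{\mathcal{E}}{\{y\}}\geq C_x$ uniformly for all $y \in U_x$.

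The main technical subtlety is translating the a.e.~condition built into the definition of capacity into a pointwise bound that can be plugged into the Sobolev embedding. This is where $\lambda << \nu$ is essential, since it converts the $\nu$-a.e.~inequality $u \geq 1$ first into a $\lambda$-a.e.~inequality and then, through the continuous representative, into a genuine pointwise statement at the chosen point. Once this bridge is in place, the uniformity in part (ii) comes essentially for free because the controlling constants depend only on the ambient neighbourhood $U_x$ and not on the individual point $y$.
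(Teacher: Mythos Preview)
Your proposal is correct and follows essentially the same route as the paper: both combine the one-dimensional Sobolev embedding on a fixed bounded interval $U_x$ with the assumed coercivity bound to obtain $|\widetilde{u}(y)|^2 \leq C_{\text{emb}}\,C_{U_x}\,\mathcal{E}_1(u,u)$, turn the $\nu$-a.e.\ inequality $u\geq 1$ into a pointwise one via $\lambda\ll\nu$ and continuity, and then observe that the resulting lower bound on $\Capac{\mathcal{E}}{\{y\}}$ depends only on $U_x$. The only cosmetic difference is that the paper packages the key estimate as continuity of the point-evaluation functional $E_x\colon f\mapsto\widetilde f(x)$ and reaches the capacity via an approximating sequence, whereas you pass directly to the infimum; the content is identical.
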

\begin{proof}
	We note that since $D(\mathcal{E})\subseteq H^{1,2}_{\text{loc}}(\R)$, each $f \in D(\Erho)$ admits a continuous $\lambda$-version that we denote by $\widetilde{f}$ and which is constructed using a classical Sobolev embedding theorem, see e.g.~\cite{AdamsSobSpaces}, Lemma 5.15 (p.107). The condition $\lambda << \nu$ ensures that two different $\nu$-versions of $f \in D(\mathcal{E})$ are also equivalent \withrespectto $\lambda$, and therefore, the continuous $\lambda$-version of $f$ is unique.\\
	The proof uses the ideas of \cite{Sauerbrey}, Theorem 3.14(i) in Arxiv Version (p.14). Let $x \in \R$. We define
	\begin{equation}
		E_x \vcentcolon D(\mathcal{E}) \to \R, x \mapsto \widetilde{f}(x).
	\end{equation}
	This mapping is well defined and continuous. Indeed, let $U_x$ be an open and bounded neighbourhood of $x$. Then $\restrictfunc{\widetilde{f}}{U_x}= \widetilde{\restrictfunc{f}{U_x}}$, where $\widetilde{\restrictfunc{f}{U_x}}$ denotes the bounded and continuous $\lambda$-version of $\restrictfunc{f}{U_x} \in H^{1,2}(U_x)$ given by the Sobolev embedding theorem mentioned above. Using this Theorem and our assumptions, we have
	\begin{align*}
		&\abs{\widetilde{f}(x)}^2 = \abs{\widetilde{\restrictfunc{f}{U_x}}(x)}^2 \leq \norm{\widetilde{\restrictfunc{f}{U_x}}}_{\text{sup}}^2\leq C_{\text{emb}} \norm{\restrictfunc{f}{U_x}}_{H^{1,2}(U_x)}^2 \leq C_{\text{emb}} C_{U_x} \mathcal{E}_1(f,f),
	\end{align*}
	where $C_{\text{emb}}>0$ is a constant from the Sobolev embedding theorem and $C_{U_x}$ exists by assumption. Since no constant depends on $f$,  $E_x$ is continuous.\\ 
	By the definition of Capacity and \cite{Fukushima}, there exists a sequence of subsets of $\R$ denoted by $(\mathscr{O}_k)_{k \in \N}$ with $x \in  \mathscr{O}_k$ for all $k \in \N$ and a sequence of functions $(f_k)_{k \in \N} \in D(\mathcal{E})$ such that $f_k \geq 1$ $\murho$-a.e.~on $\mathscr{O}_k$ for all $k \in \N$ and $\mathcal{E}_1(f_k, f_k) \to \Capac{\mathcal{E}}{\{x\}}$. In particular, $\widetilde{f_k}(y) \geq 1$ for all $y \in \mathscr{O}_k$ by continuity. This implies 
	\begin{align*}
		1 \leq \abs{\widetilde{f_k}(x)}^2 = \abs{E_x f_k}^2 \leq C_{\text{emb}}C_{U_x}\mathcal{E}_1(f_k,f_k)
	\end{align*}
	what gives $\mathcal{E}_1(f_k,f_k) \geq \frac{1}{C_{\text{emb}}C_{U_x}}$ for all $k \in \N$.
	Therefore,
	\begin{align}\label{ineq: Ineq for Cap of singletons in bounded set}
		\Capac{\mathcal{E}}{\{x\}} \geq \frac{1}{C_{U_x} C_{\text{emb}}}
	\end{align}
	and by a monotonicity argument, every non-empty subset of $\R$ has positive Capacity what shows (a). 
	
	Let $z \in U_x$. Since $U_x$ is an open and bounded neighbourhood of $z$, we obtain with the same arguments and same constants that $\abs{\widetilde{f}(z)}^2 \leq C_{U_x} C_{\text{emb}} \mathcal{E}_1(f,f)$
	for all $f \in D(\mathcal{E})$. Thus, we again have $\Capac{\mathcal{E}}{\{z\}} \geq \frac{1}{C_{U_x} C_{\text{emb}}}.$
	With $C_{x} \vcentcolon = \frac{1}{C_{U_x} C_{\text{emb}}}>0$ we obtain (b).
\end{proof}

The condition in Lemma \ref{lem: cap zero}(b) enables us to show that every quasi continuous function is continuous.
\begin{lem}\label{lem: quasi cont is cont}
	Let $X$ be a locally compact separable metric space, let $m$ be a positive Radon measure on $X$ such that supp($m$)=$X$ and let $(\mathcal{E}, D(\mathcal{E}))$ be a regular Dirichlet form on $L^2(X,m)$. Assume that for all $x \in X$, there exists an open neighbourhood $U_x$ and a constant $C_{x}>0$ such that 
	\begin{align}\label{ineq: Cap locally uniformly bounded from below}
		\Capac{\mathcal{E}}{\{y\}} \geq C_{x}\text{ for all }y \in U_x.
	\end{align}
	Then every quasi continuous function is continuous.
\end{lem}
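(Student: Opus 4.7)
The plan is to show continuity pointwise by exploiting the lower bound on the capacity of singletons to eliminate the exceptional set from any local neighbourhood. Let $u$ be quasi continuous and fix an arbitrary point $x \in X$. By assumption, there exist an open neighbourhood $U_x$ of $x$ and a constant $C_x>0$ such that $\mathrm{Cap}_{\mathcal{E}}(\{y\})\ge C_x$ for every $y\in U_x$.

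Now choose any $\varepsilon$ with $0<\varepsilon<C_x$. Since $u$ is quasi continuous, there exists an open set $G\subseteq X$ with $\mathrm{Cap}_{\mathcal{E}}(G)<\varepsilon$ such that the restriction $u\restriction_{X\setminus G}$ is continuous. The key observation is that $G\cap U_x$ must be empty: if there were a point $y\in G\cap U_x$, then by the monotonicity of capacity,
\begin{equation*}
	C_x \le \mathrm{Cap}_{\mathcal{E}}(\{y\}) \le \mathrm{Cap}_{\mathcal{E}}(G) < \varepsilon < C_x,
\end{equation*}
a contradiction. Consequently $U_x\subseteq X\setminus G$.

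Therefore $u\restriction_{U_x}$ coincides with the restriction to $U_x$ of the continuous function $u\restriction_{X\setminus G}$, so $u$ is continuous at $x$. Since $x\in X$ was arbitrary and continuity is a local property, $u$ is continuous on all of $X$. I do not see a serious obstacle here; the only subtle point is the monotonicity of capacity on arbitrary subsets of $X$, which is available for regular Dirichlet forms (see \cite{Fukushima}, Section 2.1), and the fact that quasi continuity guarantees $u$ is genuinely defined quasi everywhere so that the pointwise evaluation on $U_x$ makes sense (which again follows from $U_x\cap G=\emptyset$ and $\mathrm{Cap}_{\mathcal{E}}(\emptyset)=0$ implying $u$ is defined everywhere on $U_x$).
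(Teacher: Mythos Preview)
Your proof is correct and follows essentially the same idea as the paper's: use the local uniform lower bound on singleton capacities to force the exceptional set to miss the neighbourhood $U_x$, whence $u$ is continuous at $x$. The only difference is cosmetic: you work directly with the $\varepsilon$--$G$ definition of quasi continuity, while the paper uses the equivalent nest characterisation (an $\mathcal{E}$-nest $(F_k)_{k\in\mathbb{N}}$ with $u|_{F_k}$ continuous) and argues that $U_x\subseteq F_k$ for some $k$ by contradiction with $\mathrm{Cap}_{\mathcal{E}}(F_k^{\mathrm{C}})\to 0$. Both routes are equally short; yours is arguably slightly more direct since it avoids invoking the nest equivalence.
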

\begin{proof}
	Let $u$ be a quasi continuous function and $(F_k)_{k \in \N}$ be an $\mathcal{E}$-nest such that $\restrictfunc{u}{F_k}$ is continuous for all $k \in \N$. Let $x \in X$ and $U_x$ an open neighbourhood such that Inequality \eqref{ineq: Cap locally uniformly bounded from below} is fulfilled. Assume that $U_x \cap F_k^{\text{C}} \neq \emptyset$ for all $k \in \N$. Then we find a sequence $(y_k)_{k \in \N}$ with $y_k \in U_x \cap F_k^{\text{C}}$ for all $k \in \N$ and conclude for all $k \in \N$ that
	\begin{equation*}
		\Capac{\mathcal{E}}{F_k^{\text{C}}}\geq \Capac{\mathcal{E}}{\{y_k\}} \geq C_{x}
	\end{equation*}
	what implies
	\begin{equation*}
		\lim_{k \to \infty}\Capac{\mathcal{E}}{F_k^{\text{C}}}\geq C_{x}>0
	\end{equation*}
	which is a contradiction to the fact that $(F_k)_{k \in \N}$ is an $\mathcal{E}$-nest.
	Thus, there exists a $k \in \N$ such that $U_x \subseteq F_k$ and in particular, $x$ is an inner point of $F_k$. By assumption, $\restrictfunc{u}{F_k}$ is continuous and therefore, $u$ is continuous in $x$.
\end{proof}

\begin{lem}\label{lem: Existence of Process in all points 1 dim}
	If $d=1$, then $(\Erho, D(\Erho))$ given in \eqref{defn: Erho} fulfils the Conditions for Lemma \ref{lem: cap zero}(b), see Remark \ref{rem: Cinfcomp is dense in Ltwomurho}, Proposition \ref{prop: Domain is subset of Sob Space} and using the Sobolev embedding theorem.  Thus, using Lemma \ref{lem: quasi cont is cont}, we can replace quasi continuous by continuous in Theorem \ref{thm: Existence of processs} . This shows for any $f \in \LcaltwomurhoOneDim$ and all $t >0$ that $p_tf \in \ContDiff{\R}{0}{}$, what is called $\mathscr{L}^2$- strong Feller property.
\end{lem}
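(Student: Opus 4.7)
The plan is to verify one by one the hypotheses of Lemma \ref{lem: cap zero} for our concrete Dirichlet form $(\Erho, D(\Erho))$ on $L^2(\R, \murho)$, then appeal to that lemma to obtain the local lower bound on the capacity of singletons, and finally invoke Lemma \ref{lem: quasi cont is cont}. Regularity is provided by Proposition \ref{prop: Diri from regular and strongly local}; that $\murho$ is a positive Radon measure with $\operatorname{supp}(\murho)=\R$ follows from Remark \ref{rem: Cinfcomp is dense in Ltwomurho} combined with the strict positivity of $\varrho$ and the fact that $\mu\ge\lambda$; and $\lambda\ll\murho$ holds because $\varrho>0$ forces $\murho\ge \varrho\,\lambda$ on $\Borel{\R}$. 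The inclusion $D(\Erho)\subseteq H^{1,2}_{\text{loc}}(\R)$ (in the exact sense required) is Proposition \ref{prop: Domain is subset of Sob Space}.

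The only computational step is the local estimate
\begin{equation*}
    \norm{\restrictfunc{f}{U}}_{H^{1,2}(U)}^2 \le C_U\,\Erho_1(f,f)\qquad \text{for all }f\in D(\Erho),
\end{equation*}
for every open bounded $U\subseteq\R$. I would argue as follows: since $\overline U$ is compact and $\varrho$ is continuous and strictly positive, $c_U\vcentcolon=\inf_{\overline U}\varrho>0$. Using the representation $\Erho(f,f)=\myint{\R}{(f^{(1)})^2\,\varrho}{\lambda}{}$ (Proposition \ref{prop: Domain is subset of Sob Space}) and the trivial pointwise bound $\mathbbm{1}_U\le \mathbbm{1}_U\,\varrho/c_U$, one obtains
\begin{equation*}
    \myint{U}{f^2\,}{\lambda}{}\le \frac{1}{c_U}\myint{\R}{f^2\,\varrho}{\mu}{}=\frac{1}{c_U}\norm{f}_{\Ltwomurho}^2,\quad
    \myint{U}{(f^{(1)})^2\,}{\lambda}{}\le \frac{1}{c_U}\Erho(f,f),
\end{equation*}
so $C_U\vcentcolon=1/c_U$ works. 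Hence Lemma \ref{lem: cap zero}(b) applies: for every $x\in\R$ there exist an open bounded neighbourhood $U_x$ and $C_x>0$ with $\Capac{\Erho}{\{y\}}\ge C_x$ for all $y\in U_x$.

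At this point the ingredients of Lemma \ref{lem: quasi cont is cont} are in place ($X=\R$ is locally compact separable metric, $\murho$ is a positive Radon measure with full support, the form is regular and the local lower bound on $\text{Cap}_{\Erho}$ of singletons just established is exactly \eqref{ineq: Cap locally uniformly bounded from below}). Therefore every $\Erho$-quasi continuous function on $\R$ is actually continuous. Applying this to the quasi continuous version $p_tf$ of $T_tf$ produced in Theorem \ref{thm: Existence of processs} for $f\in\LcaltwomurhoOneDim$ and $t>0$ yields $p_tf\in\ContDiff{\R}{0}{}$, which is the claimed $\mathscr{L}^2$-strong Feller property.

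None of the steps is really an obstacle; the only subtlety worth double-checking is that the local $H^{1,2}$-estimate uses the full $\Erho_1$-norm (including the $L^2(\murho)$-part to control $\norm{f}_{L^2(U,\lambda)}$, not just the $L^2(\measdens{\lambda}{\varrho})$-part), and that the choice of $\nu=\murho$ in Lemma \ref{lem: cap zero} is compatible with $\lambda\ll\nu$, both of which are immediate from $\varrho>0$ and $\mu\ge\lambda$.
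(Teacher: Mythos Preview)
Your proposal is correct and follows exactly the approach indicated in the paper, which does not give a separate proof but simply points to Remark~\ref{rem: Cinfcomp is dense in Ltwomurho}, Proposition~\ref{prop: Domain is subset of Sob Space}, and the Sobolev embedding theorem. You have correctly spelled out the one computational step left implicit there, namely the local estimate $\norm{\restrictfunc{f}{U}}_{H^{1,2}(U)}^2\le C_U\,\Erho_1(f,f)$ via $c_U=\inf_{\overline U}\varrho>0$, and the remaining verifications are precisely the ones the paper's references supply.
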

\subsection{Fukushima decomposition and associated SDE}
With the condition
\begin{cond}\label{cond: rho integrable}
	Let $\varrho \in \ContDiff{\R^d}{1}{}\cap \mathscr{L}^1(\R^d, \mu)$, \ie continuously differentiable and integrable \withrespectto $\mu$, and $\varrho>0$
\end{cond}
we show that the process $\Markov{\varrho}$ given in Theorem \ref{thm: Existence of processs} solves
\begin{align}\left\{
	\begin{array}{ll}
		dX^i_t &= \sqrt{2}\cdot\mathbbm{1}_{\mathbb{R}^d\setminus A}\big(X_t\big)\, dB^i_t + \partial_{x_i}\ln(\varrho)(X_t)\cdot\mathbbm{1}_{\mathbb{R}^d\setminus A}\big(X_t\big)\, dt\\
		X^i_0 &= x_i
	\end{array}\right.\text{for}\quad i\in\{1,\ldots,d\},
\end{align}
for 
\begin{enumerate}[(i)]
	\item for all starting points $x \in \R$ with respect to the underlying Dirichlet form $(\Erho, D(\Erho))$ given in \eqref{defn: Erho}  if $\SetA$ fulfils Condition \ref{cond: d is one} and $\varrho$ fulfils Condition \ref{cond: rho ess bounded} or \ref{cond: rho integrable},
	\item quasi every starting point $x=(x_1,\ldots,x_d)^T\in\mathbb{R}^d$ with respect to the underlying Dirichlet form $(\Erho, D(\Erho))$ if $\SetA$ fulfils Condition \ref{cond: Lipschitz boundary} and $\varrho$ fulfils Condition \ref{cond: rho integrable}.
\end{enumerate}
In both cases, the main ingredient for the proof is the Fukushima decomposition, \cite{Fukushima}, Theorem 5.2.5 (p.252).
Some preparatory work is necessary to check the assumptions.
We start with recalling some definition from the context of additive functionals. This requires an underlying regular Dirichlet form. In our case, we define them for $(\Erho, D(\Erho))$.
\begin{definition}\hfill
	\begin{enumerate}[(i)]
		\item{(\cite{Fukushima}, p.83)} A positive Borel measure $\nu$ on $(\R^d, \Borel{\R^d})$ is called smooth, if 
		\begin{description}
			\item[(S1)] $\nu$ charges no sets of capacity zero,
			\item[(S2)] there exists an increasing sequence $\{F_n\}_{n \in \N}$ of closed sets such that $\nu(F_n) < \infty$ for all $n \in \N$ and $\lim_{n \to \infty} \Capac{\Erho}{K \setminus F_n}=0$ for any compact set $K$.
		\end{description}
		The set of all smooth measures is denoted by $\Smoothmeasures$.
		\item {(\cite{Fukushima}, p. 77)} A positive Radon measure $\nu$ is said to be of finite energy integral if there exists a constant $0<C<\infty$ such that
		\begin{equation}
			\myint{\R^d}{\abs{g(x)}}{\nu(x)}{} \leq C \sqrt{\Erho_1(g,g)}
		\end{equation}
		holds true for all $g \in \ContDiff{\R^d}{0}{c}\cap D(\Erho)$. The set of all positive Radon measures of finite energy integral is denoted by $\Smoothmeasures_0$.
	\end{enumerate}
\end{definition}

\begin{rem}[{\cite{Fukushima}, p.77}]
	A positive Radon measure $\nu$ on $\R^d$ is of finite energy integral if and only if for each $\alpha>0$ there exists a unique function $U_{\alpha}\nu \in D(\Erho)$ such that
	\begin{equation*}
		\Erho_{\alpha}(U_{\alpha}\nu, g) = \myint{\R^d}{g(x)}{\nu(x)}{}
	\end{equation*}
	for all $g \in \ContDiff{\R^d}{0}{c}\cap D(\Erho)$. The function $U_{\alpha}\nu$ is called $\alpha$--potential.
\end{rem}

\begin{definition}[{\cite{Fukushima}, p.81}]
	We set
	\begin{equation}
		\Smoothmeasures_{00} \vcentcolon = \{\nu \in \Smoothmeasures_0 \vcentcolon \nu(\R^d) < \infty, \norm{U_{1}\nu}_{L^{\infty}(\R^d, \murho)}< \infty\}.
	\end{equation}
\end{definition}

We refer to \cite{Fukushima}, Section 4.1 (p.152) for the definition of (properly) exceptional. \\
Based on the constructed process $\Markov{\varrho}$, see Theorem \ref{thm: Existence of processs} and $(\Erho, D(\Erho))$, we define additive functionals. Here, $\zeta$ denotes the lifetime of $\Markov{\varrho}$.
\begin{definition}[{\cite{Fukushima}, Section 5.1 (p.222) \& \cite{Fukushima}, p.235-236}]
	A family $(\Additive_t)_{t \geq 0}$ of extended real valued functions $\Additive_t \vcentcolon \Omega \to \R \cup \{-\infty, \infty\}$ is called additive functional (AF in abbreviation) if $\vcentcolon$
	\begin{enumerate}
		\item For each $t\geq 0$, $\omega \mapsto \Additive_t(\omega)$ is $\mathcal{F}_t$ measurable, where $(\mathcal{F}_t)_{t \geq 0}$ denotes the minimum completed admissible filtration.
		\item There exists a set $\Lambda \in \mathcal{F}_{\infty}$ and an exceptional set $N \subseteq \R^d$ such that $P_x(\Lambda)=1$ for all $x \in \R^d\setminus N$, $\Theta_t \Lambda \subseteq \Lambda$ for all $t>0$ and moreover, for each $\omega \in \Lambda$
		\begin{enumerate}[(i)]
			\item $t \mapsto \Additive_t(\omega)$ is right continuous and has the left limit on $[0, \zeta(\omega))$,
			\item $\Additive_0(\omega) =0$,
			\item $\abs{\Additive_t(\omega)}< \infty$ for all $t < \zeta(\omega)$,
			\item $\Additive _t(\omega) = \Additive_{\zeta(\omega)}(\omega)$ for all $t \geq \zeta(\omega)$
			\item $\Additive_{t+s}(\omega)= \Additive_t(\Theta_s(\omega))$ for all $t,s \geq 0$.
		\end{enumerate}
	\end{enumerate}
	The set $\Lambda$ is called defining set, the set $N$ is called exceptional set of the AF $(\Additive_t)_{t \geq 0}$. \\
	Two AFs $\Additive^{(1)}$ and $\Additive^{(2)}$ are equivalent, in notation $\Additive^{(1)} = \Additive^{(2)}$, if there exists a common defining set $\Lambda$ and a common properly exceptional set $N$ such that $P_x(\Lambda)=1$ for all $x \in \R^d\setminus N$ and $\Additive^{(1)}(\omega) = \Additive^{(2)}(\omega)$ for all $t>0$ and all $\omega \in \Lambda$. 
	An AF is finite \resp continuous if for all $\omega \in \Lambda$, $\abs{\Additive_t(\omega)}<\infty$ for all $t\geq0$ \resp $[0, \infty) \ni t \mapsto \Additive_t(\omega) \in \R \cup \{-\infty, \infty\}$ is continuous.
	A $[0, \infty]$-- valued continuous AF is called a positive continuous AF (PCAF in abbreviation). The set of all PCAFs is denoted by $A_c^+$.
	An AF is called cadlag if $t \mapsto \Additive_t(\omega)$ is right continuous and possesses the left limit on $[0, \infty)$ for any $\omega \in \Lambda$.\\
	An AF in the strict sense is an additive functional with exceptional set $N=\emptyset$, i.e.~an additive funcitonal with a defining set $\Lambda \in \mathcal{F}_{\infty}$ such that $P_x(\Lambda)=1$ for all $x \in \R^d$. Two strict AFs $\Additive^{(1)}$ and $\Additive^{(2)}$ are equivalent, in notation $\Additive^{(1)}= \Additive^{(2)}$, if  there exists a common defining set $\Lambda \in \mathcal{F}_{\infty}$ with $P_x(\Lambda)=1$ for all $x \in \R$ such that $\Additive^{(1)}_t(\omega)=\Additive^{(2)}_t(\omega)$ for all $\omega \in \Lambda$ and all  $t \geq 0$. 
\end{definition}

\begin{definition}[{\cite{Fukushima}, Theorem 5.1.3(iii) (p.229)}]
	A measure $\nu \in \Smoothmeasures$ and an AF $\Additive \in A_c^+$ are said to be in Revuz correspondence if for all non--negative $f,h \in \Borel{\R^d}$ it holds
	\begin{align*}
		\myint{\R^d}{\Expec{x}{}{\myint{0}{f(X_s)\,}{\Additive_s}{t}}h(x)\,\varrho(x)}{\mu(x)}{} = \myint{[0,t]}{\myint{\R^d}{p_sh(x)f(x)}{\nu}{}}{\lambda(s)}{}.
	\end{align*}
\end{definition}
Theorem 5.1.4. in \cite{Fukushima}, p.230 states that the family of all equivalence classes of $A_c^+$ and the family $\Smoothmeasures$ are in one-to-one correspondence under the Revuz correspondence.

For a measure $\chi\murho$ with $\chi$ non-negative, bounded and measurable, we derive the corresponding PCAF using that $\murho$ is symmetric.

\begin{prop}\label{prop: Revuz correspondance and S00}
	Let $\chi, \widehat{\chi} $ be non-negative, bounded and measurable. Then 
	\begin{enumerate}[(i)]
		\item $(\myint{[0,t]}{\chi(X_s)}{\lambda(s)}{})_{t \geq 0}$ is a PCAF and in Revuz correspondance with the measure $\chi \murho$. \\
		In particular, if $\chi= \widehat{\chi}\ \murho$-a.e., then $(\myint{[0,t]}{\chi(X_s)}{\lambda(s)}{})_{t \geq 0} = (\myint{[0,t]}{\widehat{\chi}(X_s)}{\lambda(s)}{})_{t \geq 0}$.
		\item If $\varrho$ fulfils  \ref{cond: rho integrable}, then $\chi \murho\in \Smoothmeasures_{00}$.
		\item If $d=1$ and $\varrho$ fulfils \ref{cond: rho ess bounded}, we have that $\mathbbm{1}_G\chi\murho\in\Smoothmeasures_{00}$ for all open and relatively compact $G \subseteq \R$, i.e.~for all open and bounded $G \subseteq \R$. 
	\end{enumerate}
\end{prop}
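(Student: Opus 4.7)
The plan is to verify directly that $A_t \defL \int_{[0,t]} \chi(X_s)\, d\lambda(s)$ is a PCAF (in the strict sense) with defining set $\Lambda = \Omega$ and empty exceptional set. Boundedness and non-negativity of $\chi$ combined with continuity of sample paths make $A_t$ non-negative, continuous in $t$, finite and zero at $t=0$; Borel measurability of $\chi$ and adaptedness of $X$ ensure $\mathcal{F}_t$-measurability; the strict additivity $A_{t+s}(\omega) = A_t(\omega) + A_s(\Theta_t \omega)$ follows by splitting the integral at $t$ and using $X_{t+u} = X_u \circ \Theta_t$. To identify the Revuz measure, I would compute for non-negative Borel $f, h$, applying Fubini and the $\murho$-symmetry of $(p_s)$ from Theorem \ref{thm: Existence of processs}:
\begin{align*}
\int_{\R^d} \Expec{x}{}{\int_{[0,t]} f(X_s)\, dA_s}\, h\, d\murho &= \int_{[0,t]} \int_{\R^d} p_s(f\chi)\, h\, d\murho\, d\lambda(s) \\
&= \int_{[0,t]} \int_{\R^d} f\, p_s h\, d(\chi\murho)\, d\lambda(s),
\end{align*}
which is precisely the defining relation of the Revuz correspondence for $\chi\murho$. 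The ``in particular'' statement is then the uniqueness part: if $\chi = \widehat\chi$ $\murho$-a.e., then $\chi\murho = \widehat\chi\murho$ as measures on $\Borel{\R^d}$, so the two PCAFs share the same Revuz measure and \cite{Fukushima}, Theorem 5.1.4 forces them to be equivalent.

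\textbf{Parts (ii) and (iii).} Membership in $\Smoothmeasures_{00}$ amounts to three checks: finite total mass, finite energy integral, and essentially bounded $1$-potential. Under Condition \ref{cond: rho integrable} the measure $\murho$ itself is finite, so $(\chi\murho)(\R^d) \leq \norm{\chi}_{\text{sup}}\, \murho(\R^d) < \infty$, and for any $g \in \ContDiff{\R^d}{0}{c}\cap D(\Erho)$ Cauchy--Schwarz gives
\begin{equation*}
\int_{\R^d} \abs{g}\, d(\chi\murho) \leq \norm{\chi}_{\text{sup}}\, \murho(\R^d)^{1/2}\, \norm{g}_{\Ltwomurho} \leq \norm{\chi}_{\text{sup}}\, \murho(\R^d)^{1/2}\, \sqrt{\Erho_1(g,g)},
\end{equation*}
hence $\chi\murho \in \Smoothmeasures_0$. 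Using the semigroup representation of the $1$-potential, $U_1(\chi\murho)$ equals $\int_0^\infty e^{-s}\, p_s\chi\, d\lambda(s)$ $\murho$-a.e., which is pointwise bounded by $\norm{\chi}_{\text{sup}}$ because $(p_s)$ is Markovian. For (iii), the identical chain of estimates applies to $\mathbbm{1}_G\chi$ once $\murho(G) < \infty$ is verified: Condition \ref{cond: d is one} makes $G \cap A$ finite (local finiteness of $\{x_n\}$), so $\mS(G) < \infty$, while boundedness of $\varrho$ and finiteness of $\lambda(G)$ give $\int_G \varrho\, d\lambda < \infty$, hence $\murho(G) < \infty$.

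\textbf{Main obstacle.} The only non-routine step is justifying the semigroup representation of the $1$-potential $U_1(\chi\murho)$. I would check that $R_1\chi \defL \int_0^\infty e^{-s}\, p_s\chi\, d\lambda(s)$ lies in $D(\Erho)$ and satisfies $\Erho_1(R_1\chi, g) = \int g\, d(\chi\murho)$ for all $g \in \ContDiff{\R^d}{0}{c}\cap D(\Erho)$ --- a routine computation using $\murho$-symmetry and the resolvent identity --- and then appeal to uniqueness of the $1$-potential. The Revuz identity in (i) is conceptually the crux, but it reduces to Fubini plus symmetry of $(p_s)$; handling the ``in particular'' statement through Fukushima's bijection theorem avoids any delicate direct pathwise argument about occupation times of $\murho$-null sets by the process $X$.
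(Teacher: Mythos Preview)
Your Part (i) is essentially identical to the paper's argument: Fubini/Tonelli combined with the $\murho$-symmetry of $(p_s)_{s\ge 0}$, then uniqueness of the PCAF in Revuz correspondence.

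Parts (ii) and (iii), however, take a genuinely different route. The paper first proves $\mathbbm{1}\in D(\Erho)$ (via the approximating sequence from the recurrence proof) and then checks \emph{directly} that $U_\alpha(\murho)=\tfrac{1}{\alpha}\mathbbm{1}$, concluding $\murho\in\Smoothmeasures_{00}$ and passing to $\chi\murho$ by a lemma from \cite{Fukushima}. For (iii) the paper does not repeat this; instead it establishes $\mathbbm{1}_G\murho\in\Smoothmeasures_0$ via the one-dimensional Sobolev embedding, and then uses a capacity-exhaustion argument (Fukushima's Lemma 2.2.5 together with the fact, proved earlier, that singletons have positive capacity) to force $G\subseteq F_{N_0}$ for some compact $F_{N_0}$ with $\mathbbm{1}_{F_{N_0}}\murho\in\Smoothmeasures_{00}$. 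Your route---identifying $U_1(\chi\murho)$ with the $L^2$-resolvent $G_1\chi$ and bounding it pointwise by sub-Markovianity---is more uniform and considerably shorter for (iii); the paper's capacity argument is not needed. What the paper's approach in (ii) buys is an explicit formula for the potential, which is pleasant but not essential here.

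One slip to correct: in (iii) you invoke Condition \ref{cond: d is one} to conclude $G\cap A$ is finite, but that condition is \emph{not} assumed in part (iii)---only $d=1$ and Condition \ref{cond: rho ess bounded} are. Fortunately you do not need it: by Remark \ref{rem: Cinfcomp is dense in Ltwomurho} the measure $\murho$ is Radon, so $\murho(G)\le\murho(\overline{G})<\infty$ for any bounded open $G$, and your estimates go through unchanged.
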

\begin{proof}
	\begin{enumerate}[(i)]
		\item Note that for each $t \geq 0$ and $\omega \in \Omega$, the integral $\myint{[0,t]}{\chi(X_s(\omega))}{\lambda(s)}{}$ exists and since $X$ is adapted, Tonelli yields that $\Omega \ni \omega \to \myint{[0,t]}{\chi(X_s(\omega))}{\lambda(s)}{} \in \R$ is $\mathcal{F}_t$-measurable. Furthermore, $(\myint{[0,t]}{\chi(X_s)}{\lambda(s)}{})_{t \geq 0}$ is a PCAF. Let $f,h \in \Borel{\R^d}$ and non-negative. Then, applying Tonelli's theorem and the fact that $\murho$ is symmetric, compare Theorem \ref{thm: Existence of processs}, we obtain
		\begin{align*}
			&\myint{\R^d}{\Expec{x}{}{\myint{[0,t]}{f(X_s)\chi(X_s)\,}{\lambda(s)}{}}h(x)\,\varrho(x)}{\mu(x)}{}
			=\myint{[0,t]}{\myint{\R^d}{fp_sh(x)\,\chi(x)\varrho(x)}{\mu(x)}{}}{\lambda(s)}{}
		\end{align*}
		what proves the Revuz correspondence.
		The second statement is clear since $\chi\murho=\widehat{\chi}\murho$ and the corresponding PCAF is up to equivalence unique.
		\item It holds that $\murho$ is a smooth measure. Let $\alpha>0$. 
		Applying \cite{MaRoeckner}, Lemma I.2.12 (p.21) to the sequence $(\varphi_n)_{n \in \N}$ constructed in Proposition \ref{prop: E is recurrent} and taking Condition \ref{cond: rho integrable} into account, we obtain $\mathbbm{1} \in D(\Erho)$. Therefore, also $\frac{1}{\alpha}\mathbbm{1} \in D(\Erho)$. It holds for $g \in \ContDiff{\R^d}{0}{c}\cap D(\Erho)$
		\begin{align*}
			\myint{\R^d}{g(x) \,\varrho(x)}{\mu(x)}{} = \Erho_{\alpha}\left(\frac{1}{\alpha}\mathbbm{1}, g\right).
		\end{align*}
		The uniqueness implies $U_{\alpha}\left(\murho\right) = \frac{1}{\alpha}\mathbbm{1}$ and that $\murho \in \Smoothmeasures_0$. Since $\norm{\frac{1}{\alpha}\mathbbm{1}}_{L^{\infty}(\murho)}< \infty$ and $\murho$ is finite, we conclude $\murho \in \Smoothmeasures_{00}$. \cite{Fukushima}, Lemma 5.1. (p.228) now yields that also $\chi\murho \in \Smoothmeasures_{00}$.
		\item Let $G \subseteq \R$ be open and bounded. We have that $\mathbbm{1}_G\murho$ is a smooth measure. Let $g \in \ContDiff{\R^d}{0}{c}\cap D(\Erho)$. With Proposition \ref{prop: Domain is subset of Sob Space} we conclude that $g \in H^{1,2}(G)$. Using that $\murho$ is $\sigma$-finite, that $\varrho>0$ is continuous, the Sobolev embedding theorem and an equivalent norm on $H^{1,2}(G)$, we have for some constant $ C >0$
		\begin{align*}
			\myint{\R^d}{\abs{g(x)}\, \mathbbm{1}_G\varrho}{\mu(x)}{} \leq \murho(G) \sup_{x \in G}\abs{g(x)} 
			\leq C \sqrt{\Erho_1(g,g)}.
		\end{align*}
		Thus, there exists the $1$-potential $U_1 \vcentcolon = U_1\mathbbm{1}_G\murho$. Furthermore, by \cite{Fukushima}, Lemma 2.2.5 (p.81), there is an increasing sequence of compact sets $(F_n)_{n \in \N}$ such that $\mathbbm{1}_{F_n}\murho \in \Smoothmeasures_{00}$ for all $n \in \N$ and $\Capac{\Erho}{K \setminus F_n} \to 0$ as $n \to \infty$ for any compact set  $K$. Assume the existence of some $x \in \R\setminus \cup_{n \in \N}F_n$ and choose a compact set $K$ that contains $x$. Then $\Capac{\Erho}{K\setminus F_n} \geq \Capac{\Erho}{\{x\}}>0$ by Lemma \ref{lem: cap zero}(b) for all $n \in \N$, which is a contradiction. Thus, there is some $N_0 \in \N$ with $G \subseteq F_{N_0}$ and since $\mathbbm{1}_{F_{N_0}}\mathbbm{1}_G\varrho\mu= \mathbbm{1}_{G}\varrho\mu$, we conclude $ \mathbbm{1}_{G}\varrho\mu \in \Smoothmeasures_{00}$.
	\end{enumerate}
\end{proof}

\begin{rem}[{\cite{Fukushima}, Section 3.2 (p.123)}]
	Let $f \in D(\Erho)$ be essentially bounded. Then there exists a unique positive Radon measure $\nu_{\langle f \rangle}$ satisfying
	\begin{align*}
		\myint{\R^d}{g\,}{\nu_{\langle f \rangle}}{} = 2 \Erho(fg, f) - \Erho(f^2, g)
	\end{align*}
	for all $g \in \ContDiff{\R^d}{0}{c}\cap D(\Erho)$. We call $\nu_{\langle f \rangle}$ the energy measure of $f$.
\end{rem}
To compute the energy measure of $f \in \ContDiff{\R^d}{0}{c}\cap \ContDiff{\R^d \setminus \SetA}{2}{b}\subseteq \Ltwomurho$, we assume the same conditions on $\mS$ and $\SetA$ as in Theorems \ref{thm: Calculation Generator d is one} for $d=1$ or \ref{thm: Calculation Generator d greaterequal two} for $d \geq 2$. This ensures that $f \in D(\Erho)$.
\begin{prop}\label{prop: energy measure}
	Let $(\Erho, D(\Erho))$ be given as in \eqref{defn: Erho} and assume that Condition \ref{cond: d is one} or \ref{cond: Lipschitz boundary} is satisfied. Let $f \in \ContDiff{\R^d}{0}{c}\cap \ContDiff{\R^d \setminus \SetA}{2}{b}\subseteq \Ltwomurho$.  Then the energy measure $\nu_{\langle f \rangle}$ is given by 
	\begin{align*}
		\nu_{\langle f \rangle} =  \measdens{\lambda^d}{\left(2 \sum_{i=1}^{d} \mathbbm{1}_{\R^d\setminus\SetA}(\partial_{x_i} f)^2 \varrho\right)}.
	\end{align*}
\end{prop}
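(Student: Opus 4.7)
The plan is to directly evaluate the defining integral $2\Erho(fg,f) - \Erho(f^2,g)$ on every admissible test function $g \in \ContDiff{\R^d}{0}{c}\cap D(\Erho)$ and to recognize the result as integration of $g$ against the candidate measure; uniqueness of $\nu_{\langle f\rangle}$ as a positive Radon measure then finishes the job. The candidate is manifestly a finite positive Radon measure, because $f$ has compact support and bounded partial derivatives on $\R^d\setminus\SetA$.

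For the computation I first need that $fg$ and $f^2$ belong to $D(\Erho)$. Membership of $f^2 \in \ContDiff{\R^d}{0}{c}\cap\ContDiff{\R^d\setminus\SetA}{2}{b}$ is covered by the first part of the proof of Theorems \ref{thm: Calculation Generator d is one} and \ref{thm: Calculation Generator d greaterequal two}; for the mixed product $fg$ I would invoke that $D(\Erho)\cap L^{\infty}(\R^d,\murho)$ is an algebra under pointwise multiplication, a standard consequence of the Markovian property of $(\Erho,D(\Erho))$.

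Next I would identify the weak derivatives. By Proposition \ref{prop: Domain is subset of Sob Space}, $g \in H^{1,2}_{\text{loc}}(\R^d)$ with weak derivative $g^{(i)}$, and the proofs of the generator theorems yield that the weak derivative of $f$ equals $\mathbbm{1}_{\R^d\setminus\SetA}\partial_{x_i}f$. The Leibniz rule then gives
\[
(fg)^{(i)} = g\,\mathbbm{1}_{\R^d\setminus\SetA}\partial_{x_i}f + f\,g^{(i)},\qquad (f^2)^{(i)} = 2f\,\mathbbm{1}_{\R^d\setminus\SetA}\partial_{x_i}f.
\]
Substituting into the representation $\Erho(u,v) = \sum_i \myint{\R^d}{u^{(i)}v^{(i)}\,\varrho}{\lambda^d}{}$ from the Remark following Proposition \ref{prop: Domain is subset of Sob Space} and expanding, the cross-terms proportional to $\sum_i \myint{\R^d}{f\,g^{(i)}\,\mathbbm{1}_{\R^d\setminus\SetA}\partial_{x_i}f\,\varrho}{\lambda^d}{}$ occur with coefficient $+2$ in $2\Erho(fg,f)$ and with coefficient $+2$ in $\Erho(f^2,g)$, and therefore cancel exactly. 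What remains is $2\sum_i \myint{\R^d}{\mathbbm{1}_{\R^d\setminus\SetA}(\partial_{x_i}f)^2\,g\,\varrho}{\lambda^d}{}$, which is integration of $g$ against the asserted density.

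The main delicate point is the justification of the product rule: the weak derivative of $f$ carries the indicator $\mathbbm{1}_{\R^d\setminus\SetA}$ because $f$ is only twice continuously differentiable off the $\lambda^d$-null set $\SetA$, while $g$ is merely in $D(\Erho)\cap\ContDiff{\R^d}{0}{c}$. I expect to handle this by mollifying $g$ in the spirit of the proof of Theorem \ref{thm: Calculation Generator d greaterequal two} and passing to the limit in the resulting integration-by-parts identities on $\R^d\setminus\SetA$, or equivalently by quoting the classical Leibniz rule for the product of a bounded $H^{1,2}_{\text{loc}}$-function with an $H^{1,2}_{\text{loc}}$-function whose weak derivative is bounded with compact support.
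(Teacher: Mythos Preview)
Your proposal is correct and follows essentially the same route as the paper's proof: expand $2\Erho(fg,f)-\Erho(f^2,g)$ via the Leibniz rule for the weak derivatives of $fg$ and $f^2$, observe that the cross terms cancel, and identify $f^{(i)}=\mathbbm{1}_{\R^d\setminus\SetA}\partial_{x_i}f$ from the generator calculations. The only cosmetic difference is that the paper justifies the product rule directly by noting that compact support promotes $f,g$ from $H^{1,2}_{\text{loc}}(\R^d)$ to $H^{1,2}(\R^d)$ so that $fg,f^2\in H^{1,1}(\R^d)$ with the standard Sobolev Leibniz formula, rather than invoking the algebra property of $D(\Erho)\cap L^\infty$ or mollification.
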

\begin{proof}
	Let $f \in \ContDiff{\R^d}{0}{c}\cap \ContDiff{\R^d \setminus\SetA}{2}{b}\subseteq \Ltwomurho$ and $g \in\ContDiff{\R^d}{0}{c} \cap D(\Erho)$. Then, $fg, f^2 \in D(\Erho)$ and
	\begin{align}\label{eq: Find energy measure calculation 1}
		2\Erho(fg, f) - \Erho(f^2, g) = 2 \sum_{i=1}^d\myint{\R^d}{(fg)^{(i)}f^{(i)}\,\varrho}{\lambda^d}{} - \sum_{i=1}^d\myint{\R^d}{(f^2)^{(i)}g^{(i)}\,\varrho}{\lambda^d}{}.
	\end{align}
	By Proposition \ref{prop: Domain is subset of Sob Space}, $f,g \in \localSobolev{1,2}{\R^d}$ and since both $f$ and $g$ have representatives with compact support in $\R^d$, we even have $f,g \in \Sobolev{1,2}{\R^d}$. Thus $fg, f^2 \in \Sobolev{1,1}{\R^d}$, the derivatives are given by the product rule and Equation \eqref{eq: Find energy measure calculation 1} rewrites to
	\begin{align*}
		&2\Erho(fg, f) - \Erho(f^2, g) = 2 \sum_{i=1}^d\myint{\R^d}{f^{(i)}gf^{(i)}\,\varrho}{\lambda^d}{} + 2 \sum_{i=1}^d\myint{\R^d}{f g^{(i)}f^{(i)}\,\varrho}{\lambda^d}{}\nonumber\\
		-&2\sum_{i=1}^d\myint{\R^d}{f f^{(i)}g^{(i)}\,\varrho}{\lambda^d}{}
		= 2 \sum_{i=1}^{d}\myint{\R^d}{\left(f^{(i)}\right)^2g\,\varrho}{\lambda^d}{}\nonumber
		=  \myint{\R^d}{ g \left(2 \sum_{i=1}^d\left(f^{(i)}\right)^2\right)\,\varrho}{\lambda^d}{}.
	\end{align*}
	The measure $\measdens{\lambda^d}{\left(2 \sum_{i=1}^d\left(f^{(i)}\right)^2\varrho\right)}$ is a positive Radon measure since it is a Borel measure and finite on compact sets. The proof of Theorems \ref{thm: Calculation Generator d is one} under Condition \ref{cond: d is one} and \ref{thm: Calculation Generator d greaterequal two} under Condition \ref{cond: Lipschitz boundary} show that $f^{(i)} = \mathbbm{1}_{\R^d\setminus \SetA}(\partial_{x_i} f)$ for all $i=1, \ldots d$.
	\noindent The uniqueness of the energy measure yields 
	$\nu_{\langle f \rangle} = \measdens{\lambda^d}{\left(2 \sum_{i=1}^d\left(\mathbbm{1}_{\R^d \setminus \SetA}\partial_{x_i} f\right)^2\varrho\right)}$.
\end{proof}

We need to specify some sets of additive functionals.
\begin{rem}[{\cite{Fukushima}, Section 5.2 (p.241ff)}]\hfill
	\begin{enumerate}[(i)]
		\item For an additive functional $(\Additive_t)_{t \geq 0}$ we set the energy of $\Additive$ to be
		\begin{align*}
			e(\Additive) = \lim_{t \downarrow 0}\frac{1}{2t}\myint{\R^d}{\Expec{x}{}{\Additive_t^2}\,\varrho(x)}{\mu (x)}{}
		\end{align*}
		whenever this limit exists.
		\item We define 
		\begin{align*}
			\mathcal{M}=\{&M \vcentcolon M\text{ is a finite cadlag AF such that for each }t>0\ \Expec{x}{}{M_t^2}< \infty\text{ and }\\&\Expec{x}{}{M_t}=0\text{ for \qe }x\in \R^d\}.
		\end{align*}
		The energy of $M \in \mathcal{M}$ exists in $[0, \infty]$ and we set
		\begin{align*}
			\mathcal{\overset{\circ}{M}}=\{M \in \mathcal{M}\vcentcolon e(M) < \infty\}.
		\end{align*}
		An element $M \in \mathcal{M}$ is called martingale additive functional, in abbreviation MAF. An element $M \in \mathcal{\overset{\circ}{M}}$ is called martingale additive functional of finite energy. Indeed, \cite{Fukushima}, Appendix A.3 (p.415 ff)  shows that $M \in \mathcal{M}$ is a square integrable martingale on $(\Omega, (\mathcal{F}_t)_{t \geq 0}, P_x)$ for \qe $x \in \R^d$. Furthermore, it is possible to assign a square bracket $\langle M \rangle$ to $M$ (also called quadratic variation associated to $M$), independent of $x$ for \qe $x \in \R^d$. $\langle M \rangle$ is a PCAF and is therefore in Revuz correspondence with a measure in $\Smoothmeasures$ that we denote by $\nu_{\langle M \rangle}$.
		\item Additionally, we define
		\begin{align*}
			\mathcal{N}_c\vcentcolon = \{R \vcentcolon R \text{ is a finite continuous AF } e(R)=0,\ \Expec{x}{}{\abs{R_t}}< \infty\text{ q.e.~for each }t>0\}.
		\end{align*}
	\end{enumerate}	
\end{rem}

Given this preparation, we formulate the Fukushima decomposition of the process $\Markov{\varrho}$ given in Theorem \ref{thm: Existence of processs}.
\begin{thm}\label{thm: Fuk decomp in 1d}
	Let $(\Erho, D(\Erho))$ be given as in Equation \eqref{defn: Erho} and $\Markov{\varrho}$ the associated process given in Theorem \ref{thm: Existence of processs}. Assume that Condition \ref{cond: d is one} holds and that $\varrho$ fulfils Condition \ref{cond: rho ess bounded} or \ref{cond: rho integrable}. 
	Let $f \in \ContDiff{\R}{0}{c}\cap\ContDiff{\R\setminus\SetA}{2}{b}\subseteq\LtwomurhoOneDim$. Then
	\begin{align}\label{eq: Fukushima decomposition 1 dim}
		f(X_t)-f(X_0)= M_t^{[f]}+ \myint{[0,t]}{L^{\varrho}f(X_s)\, }{\lambda(s)}{} \text{ for all }t \geq 0\ P_x \text{-a.s.~for all } x \in \R,
	\end{align} 
	where $M^{[f]}$ is an MAF in the strict sense whose square bracket in the strict sense equals (in the sense of equivalence of strict AFs)	
	\begin{align*}
		(\langle M^{[f]}\rangle_t)_{t \geq 0} = \left(2 \sum_{i=1}^d\myint{[0,t]}{\mathbbm{1}_{\R^d\setminus \SetA}(X_s)(\partial_{x_i} f(X_s))^2 \,}{\lambda(s)}{}\right)_{t \geq 0}.
	\end{align*}
\end{thm}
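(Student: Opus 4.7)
My strategy is to apply the general Fukushima decomposition of \cite{Fukushima}, Theorem 5.2.5, to $f$, to identify both summands using the explicit generator from Theorem \ref{thm: Calculation Generator d is one} and the explicit energy measure from Proposition \ref{prop: energy measure}, and finally to promote the resulting quasi-everywhere statement to an everywhere statement via the one-dimensional capacity result behind Lemmata \ref{lem: cap zero} and \ref{lem: Existence of Process in all points 1 dim}.

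First, Theorem \ref{thm: Calculation Generator d is one} gives $f\in D(L^{\varrho})$ together with the bounded, explicit representative of $L^{\varrho}f$. The Fukushima decomposition then provides $M^{[f]}\in\overset{\circ}{\mathcal{M}}$ and $N^{[f]}\in\mathcal{N}_c$ with $f(X_t)-f(X_0)=M^{[f]}_t+N^{[f]}_t$ holding $P_x$-a.s.~for quasi every $x\in\R$. Since $f$ lies in the domain of the generator, the standard identification (see e.g.~\cite{Fukushima}, Theorem 5.4.2) shows that $N^{[f]}$ is the CAF in Revuz correspondence with the signed smooth measure $L^{\varrho}f\cdot\varrho\mu$; splitting $L^{\varrho}f$ into its bounded, measurable positive and negative parts and applying Proposition \ref{prop: Revuz correspondance and S00}(i) to each yields $N^{[f]}_t=\myint{[0,t]}{L^{\varrho}f(X_s)\,}{\lambda(s)}{}$ up to equivalence of CAFs.

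For the quadratic variation, the square bracket $\langle M^{[f]}\rangle$ is the PCAF in Revuz correspondence with the energy measure $\nu_{\langle f\rangle}$. By Proposition \ref{prop: energy measure} the latter equals $\chi\,\varrho\,d\lambda$ with $\chi\vcentcolon=2\sum_{i=1}^{d}\mathbbm{1}_{\R\setminus A}(\partial_{x_i}f)^2$, which is bounded, non-negative and measurable; hence Proposition \ref{prop: Revuz correspondance and S00}(i) identifies the associated PCAF as $t\mapsto\myint{[0,t]}{\chi(X_s)\,}{\lambda(s)}{}$, which is the asserted formula for $\langle M^{[f]}\rangle$.

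The main obstacle, and essentially the only point at which $d=1$ really enters, is the passage from ``quasi every $x$'' to ``every $x$'' and from ordinary AFs to AFs in the strict sense. This is resolved by Lemma \ref{lem: Existence of Process in all points 1 dim}, which verifies that $(\Erho,D(\Erho))$ satisfies the hypotheses of Lemma \ref{lem: cap zero}; part (a) of that lemma then forces the only $\Erho$-polar subset of $\R$ to be empty. Consequently the properly exceptional set appearing in the Fukushima decomposition is empty, so $M^{[f]}$ is a MAF in the strict sense, $\langle M^{[f]}\rangle$ is a strict PCAF with the stated representation, and the identity \eqref{eq: Fukushima decomposition 1 dim} holds $P_x$-a.s.~for every $x\in\R$.
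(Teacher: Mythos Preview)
Your argument is essentially correct, but it follows a genuinely different route from the paper's proof and contains a small citation slip. The paper proceeds by verifying the \emph{absolute continuity condition} (\cite{Fukushima}, (4.2.9)): using the $\mathscr{L}^2$-strong Feller property of $(p_t)_{t>0}$ from Lemma~\ref{lem: Existence of Process in all points 1 dim}, one sees that $p_t(x,\cdot)\ll\murho$ for every $x\in\R$. It then checks, via Proposition~\ref{prop: Revuz correspondance and S00}(ii) under Condition~\ref{cond: rho integrable} or Proposition~\ref{prop: Revuz correspondance and S00}(iii) under Condition~\ref{cond: rho ess bounded}, that the energy measure $\nu_{\langle f\rangle}$ and the measures $(L^\varrho f)^{\pm}\varrho\mu$ all belong to $\Smoothmeasures_{00}$. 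These are exactly the hypotheses of \cite{Fukushima}, Theorem~5.2.5, which then yields the decomposition with a MAF in the strict sense and the strict square bracket in one stroke.

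You instead start from the quasi-everywhere decomposition---this is \cite{Fukushima}, Theorem~5.2.2, not Theorem~5.2.5 as you write; the latter already presupposes the absolute continuity condition and the $\Smoothmeasures_{00}$ membership that you never check---identify $N^{[f]}$ and $\langle M^{[f]}\rangle$ through Revuz correspondence, and then upgrade to a statement for every $x$ by invoking Lemma~\ref{lem: cap zero}(a): since the only set of zero $\Erho$-capacity is the empty set, every properly exceptional set is empty, so every AF is automatically a strict AF and equivalence of AFs becomes equality. This is a legitimate alternative and is arguably more economical, as it avoids both the absolute continuity verification and the $\Smoothmeasures_{00}$ computations. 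The paper's route, on the other hand, stays closer to the standard machinery, makes explicit which analytic property of the semigroup (absolute continuity of the transition kernel) drives the strict refinement, and parallels the treatment of the case $d\ge 2$ in Theorem~\ref{thm: Fukushima decomp}, where your capacity shortcut is unavailable.
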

\begin{proof}
	We need to check the assumptions of \cite{Fukushima}, Theorem 5.2.5 (p.252). 
	
	Let $t>0, x \in \R$ and $N \in \Borel{\R}$ with $\murho(N)=0$. We have $p_t(x,N)= \myint{R}{\mathbbm{1}_N(X_t)}{P_x}{}$. Since $\mathbbm{1}_N \in \LcaltwomurhoOneDim$, Theorem \ref{thm: Existence of processs} implies with Remark \ref{lem: Existence of Process in all points 1 dim} that $\R \ni x \mapsto p_t\mathbbm{1}_N(x)$ is a continuous $\murho$-version of $T_t\mathbbm{1}_N =0$ in $\LtwomurhoOneDim$. As a consequence, $p_t\mathbbm{1}_N(x)=0$ and the measure $p_t(x, \cdot)$ is absolutely continuous with respect to $\murho$, what shows the \textit{absolute continuity condition}, see \cite{Fukushima}, label (4.2.9) (p. 165). Furthermore, by Proposition \ref{prop: energy measure}, $\mu_{\langle f \rangle}= \measdens{\lambda}{\left(2 \mathbbm{1}_{\R\setminus\SetA}(f')^2 \varrho\right)}=\measdens{\murho}{\mathbbm{1}_{\R\setminus A}\left(2 \mathbbm{1}_{\R\setminus\SetA}(f')^2\right)}$. Assume that $\varrho$ fulfils Condition \ref{cond: rho ess bounded}. Since $f$ has compact support, there exists some $G\subseteq \R$ open and bounded with supp$(f) \subseteq G$. Thus, $\mu_{\langle f \rangle}=  \measdens{\murho}{\mathbbm{1}_G\left(2 \mathbbm{1}_{\R\setminus\SetA}(f')^2 \right)}$ is an element of $\Smoothmeasures_{00}$ by Proposition \ref{prop: Revuz correspondance and S00}(c). By Theorem \ref{thm: Calculation Generator d is one}, there exists a bounded $\murho$-version of $L^{\varrho}f$ that has compact support.  We conclude that $\measdens{\mu}{(L^{\varrho}f)^+\varrho}$ and $\measdens{\mu}{(L^{\varrho}f)^-\varrho}$ are in $\Smoothmeasures_{00}$. If $\varrho$ fulfils Condition \ref{cond: rho integrable}, we can use Proposition \ref{prop: Revuz correspondance and S00}(b) to obtain that $\mu_{\langle f \rangle}, \measdens{\mu}{(L^{\varrho}f)^+\varrho}$ and  $\measdens{\mu}{(L^{\varrho}f)^-\varrho}$ are elements of $\Smoothmeasures_{00}$.
	Now, \cite{Fukushima}, Theorem 5.2.5 (p.252) yields that the finite AF $A^{[f]}$ defined by $A^{[f]}_t\vcentcolon= f(X_{t})-f(X_0), t\geq 0$ admits the decomposition $A^{[f]}=M^{[f]}+R^{[f]}$ where $M^{[f]}$ is an MAF in the strict sense whose square bracket in the strict sense is in Revuz correspondance with $\mu_{\langle f \rangle}$ and thus
	\begin{align*}
		(\langle M^{[f]}\rangle_t)_{t \geq 0} = \left(2 \sum_{i=1}^d\myint{[0,t]}{\mathbbm{1}_{\R^d\setminus \SetA}(X_s)(\partial_{x_i} f(X_s))^2 \,}{\lambda(s)}{}\right)_{t \geq 0}
	\end{align*}
	by Proposition \ref{prop: Revuz correspondance and S00}(a),
	and that there exists some defining set $\Lambda \in \mathcal{F}_{\infty}$ with $P_x(\Lambda)=1$ for all $x \in \R$ such that
	\begin{align*}
		R^{[f]}_t(\omega)=
		\myint{[0,t]}{L^{\varrho}f(X_s(\omega))}{\lambda(s)}{}.
	\end{align*}
	for $\omega \in \Lambda$ and $t \geq 0$. Here,  $L^{\varrho}f$ denotes a bounded $\murho$-version of $L^{\varrho}f \in \LtwomurhoOneDim$.
\end{proof}

\begin{rem}
	Using the $\mathscr{L}^2$-strong Feller property of $(p_t)_{t \geq 0}$, one can even prove for all $f \in D(L)$ that $(f(X_t)-f(X_0)-\myint{[0,t]}{L^{\varrho}f}{\lambda(s)}{})_{t \geq 0}$ is a $P_x$-martingale for all $x \in \R$ .
\end{rem}

A similar statement to the above theorem, but in a weaker form, holds in higher dimensions if $\SetA$ is chosen as Lipschitz boundary (such that we have an explicit representation of the generator) and if $\varrho$ is integrable with respect to $\mu$. This is due to the lack of the absolute continuity condition. As a consequence, we obtain a decomposition of the process $(f(X_t)-f(X_0))_{t \geq 0}$ only for q.e. $x \in \R^d$. 
\begin{thm}\label{thm: Fukushima decomp}
	Let $(\Erho, D(\Erho))$ be given as in Equation \eqref{defn: Erho} and $\Markov{\varrho}$ the associated process given in Theorem \ref{thm: Existence of processs}. Assume that Condition \ref{cond: Lipschitz boundary} is satisfied and that Condition \ref{cond: rho integrable} holds true. Let $f \in \ContDiff{\R^d}{0}{c}\cap\ContDiff{\R^d\setminus\SetA}{2}{b}\subseteq\Ltwomurho$. Then
	\begin{align}\label{eq: Fukushima decomposition}
		f(X_t)-f(X_0)= M_t^{[f]}+ \myint{[0,t]}{L^{\varrho}f(X_s)\, }{\lambda(s)}{} \text{ for all }t \geq 0\ P_x\text{-a.s.~for q.e.} x \in \R^d
	\end{align} 
	where $M^{[f]} \in \mathcal{\overset{\circ}{M}}$, whose square bracket equals (in the sense of equivalence of AFs)	
	\begin{align*}
		(\langle M^{[f]}\rangle_t)_{t \geq 0} = \left(2 \sum_{i=1}^d\myint{[0,t]}{\mathbbm{1}_{\R^d\setminus \SetA}(X_s)(\partial_{x_i} f(X_s))^2 \,}{\lambda(s)}{}\right)_{t \geq 0}.
	\end{align*}
\end{thm}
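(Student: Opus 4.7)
The plan is to mirror the proof of Theorem \ref{thm: Fuk decomp in 1d} but in its quasi-everywhere form, since in dimension $d\ge 2$ the process is not known to satisfy the absolute continuity condition. First I would note that the proof of Theorem \ref{thm: Calculation Generator d greaterequal two} already establishes that $f \in D(\Erho)\cap D(L^{\varrho})$ with the explicit generator formula given there. Moreover, the displayed $\murho$-representative of $L^{\varrho}f$ is bounded and compactly supported: the interior term $\mathbbm{1}_{\R^d\setminus A}(\Laplace f + \eucscalar{\nabla f}{\nabla\ln\varrho})$ is bounded on $\mathrm{supp}(f)$ since $f \in \ContDiff{\R^d\setminus A}{2}{b}$ and $\varrho$ is continuously differentiable, and the boundary term $\mathbbm{1}_A(\extension{(\partial_{\nu_U}f)}{\LipschitzComplement}-\extension{(\partial_{\nu_U}f)}{\Lipschitz})$ vanishes outside $\mathrm{supp}(f)\cap\partial U$ and is bounded there by Lemma \ref{lem: hplus and hminus}(b).

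Next I would verify the smoothness hypotheses needed for the Fukushima decomposition. By Proposition \ref{prop: energy measure},
\[
\mu_{\langle f\rangle} \;=\; \Bigl(2\sum_{i=1}^d \mathbbm{1}_{\R^d\setminus A}(\partial_{x_i}f)^2\Bigr)\,\murho,
\]
so $\mu_{\langle f\rangle}$ is of the form $\chi\,\murho$ with $\chi$ non-negative, bounded and measurable. By the previous paragraph, the positive and negative parts of $L^{\varrho}f\cdot\murho$ are of the form $\chi^\pm\murho$ with $\chi^\pm$ non-negative, bounded and compactly supported. Under Condition \ref{cond: rho integrable}, Proposition \ref{prop: Revuz correspondance and S00}(b) places $\mu_{\langle f\rangle}$ and $(L^{\varrho}f)^{\pm}\murho$ in $\Smoothmeasures_{00}$.

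I would then apply the Fukushima decomposition (\cite{Fukushima}, Theorem 5.2.2 together with the bounded-variation identification in the spirit of \cite{Fukushima}, Theorem 5.4.2 / 5.5.1), in the same manner as in the proof of Theorem \ref{thm: Fuk decomp in 1d}, but dropping the strict-AF upgrade. This yields $f(X_t)-f(X_0) = M^{[f]}_t + N^{[f]}_t$ $P_x$-a.s.\ for q.e.\ $x \in \R^d$ with $M^{[f]}\in\mathcal{\overset{\circ}{M}}$ and $N^{[f]}\in\mathcal{N}_c$. The square bracket $\langle M^{[f]}\rangle$ is the PCAF in Revuz correspondence with $\mu_{\langle f\rangle}$, which by Proposition \ref{prop: Revuz correspondance and S00}(a) applied to $\chi = 2\sum_i \mathbbm{1}_{\R^d\setminus A}(\partial_{x_i}f)^2$ is precisely the expression asserted. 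For the bounded-variation part, using that $f\in D(L^{\varrho})$ and $(L^{\varrho}f)^{\pm}\murho\in\Smoothmeasures_{00}$, Proposition \ref{prop: Revuz correspondance and S00}(a) identifies the PCAFs Revuz-dual to $(L^{\varrho}f)^{\pm}\murho$ with $\bigl(\int_{[0,t]}(L^{\varrho}f)^\pm(X_s)\,d\lambda(s)\bigr)_{t\ge 0}$; subtracting them gives $N^{[f]}_t = \int_{[0,t]} L^{\varrho}f(X_s)\,d\lambda(s)$.

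The main obstacle, and the reason the conclusion holds only q.e.\ rather than for every starting point as in Theorem \ref{thm: Fuk decomp in 1d}, is the unavailability of the $\mathscr{L}^2$-strong Feller property in dimension $d\ge 2$. The one-dimensional argument crucially used Lemma \ref{lem: cap zero}(b), which relied on the Sobolev embedding $\Sobolev{1,2}{\R}\hookrightarrow C_b(\R)$ to force singletons to carry positive capacity and to upgrade quasi-continuous to continuous versions of $p_tf$. For $d\ge 2$ this embedding fails, singletons have zero capacity, and the absolute continuity condition \cite{Fukushima}, label (4.2.9), needed to promote $M^{[f]}$ and $N^{[f]}$ to strict AFs is lost; consequently the statement degrades to a q.e.\ identity, outside a suitable $\Erho$-exceptional set.
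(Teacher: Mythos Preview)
Your proposal is correct and follows essentially the same route as the paper. The one place where the paper is more explicit is the final identification $N^{[f]}_t=\int_{[0,t]}L^{\varrho}f(X_s)\,d\lambda(s)$: rather than invoking \cite{Fukushima}, Theorem~5.4.2/5.5.1, the paper builds the candidate $R_t:=-R_t^{+}+R_t^{-}$ from the two PCAFs you describe, notes $R\in\mathcal{N}_c$, and then uses \cite{Fukushima}, Theorem~5.2.4 (the characterization $\lim_{t\downarrow 0}\tfrac{1}{t}\int_{\R^d}\Expec{x}{}{R_t}\,v\,\varrho\,d\mu=-\Erho(f,v)$, verified via \cite{Fukushima}, Theorem~5.1.3(vi)) to conclude $R=R^{[f]}$; your ``subtracting them gives $N^{[f]}$'' is the right idea but skips exactly this verification.
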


\begin{proof}
	Again, the idea is to use \cite{Fukushima}, Theorem 5.2.5 (p.252). Unfortunately, we don't know if the \textit{absolute continuity condition} is fulfiled under these conditions. Even if we can not apply this theorem directly, we can use ideas from its proof.\\
	Let $A^{[f]}_t \vcentcolon = f(X_t)-f(X_0)$ for all $t \geq 0$. Note that this defines a finite AF, see \cite{Fukushima}, p. 242. \cite{Fukushima}, Theorem 5.2.2 (p. 247) shows the existence of the decomposition $A^{[f]} = M^{[f]} + R^{[f]}$
	for some $M^{[f]} \in \mathcal{\overset{\circ}{M}}$ and $R^{[f]}\in \mathcal{N}_c$. Further, \cite{Fukushima}, Theorem 5.2.3 (p.250) shows that the energy measure of $f$ equals the measure that is in Revuz correspondence with $\langle M^{[f]}\rangle$, thus identifies $\nu_{\langle M^{[f]}\rangle} = \nu_{\langle f \rangle}$. The Propositions \ref{prop: energy measure} and \ref{prop: Revuz correspondance and S00}(a) show
	\begin{align*}
		(\langle M^{[f]}\rangle_t)_{t \geq 0} = \left(2 \sum_{i=1}^d\myint{[0,t]}{\mathbbm{1}_{\R^d\setminus\SetA}(X_s)(\partial_{x_i} f(X_s))^2 \,}{\lambda(s)}{}\right)_{t \geq 0}.
	\end{align*}
	Furthermore, we have
	\begin{align*}
		\Erho(f,g)= \myint{\R^d}{(-L^{\varrho}f)g \,\varrho}{\mu}{} 
	\end{align*}
	for all $g \in D(\Erho)$, where $L^{\varrho}f$ is defined as in Theorem \ref{thm: Calculation Generator d greaterequal two}.
	In particular, we know by the cited theorems that $L^{\varrho}f$ has a bounded $\murho$-version and we conclude that the positive and negative part of this version $(-L^{\varrho}f)^+$ and $(-L^{\varrho}f)^-$ are non--negative bounded Borel functions. Thus, Proposition \ref{prop: Revuz correspondance and S00}(b) shows that $\measdens{\murho}{(-L^{\varrho}f)^+}$ and $\measdens{\murho}{(-L^{\varrho}f)^-}$  are elements of $\Smoothmeasures_{00}$ and part (a) of this Proposition shows that their corresponding PCAFs denoted by $(R^+_t)_{t \geq 0}$ and $(R^-_t)_{t \geq 0}$, respectively, are given by
	\begin{align*}
		(R^\pm_t)_{t \geq 0} = \left(\myint{[0,t]}{(-L^{\varrho}f)^\pm(X_s)\,}{\lambda(s)}{}\right)_{t \geq 0} 
	\end{align*}
	For all $\omega \in \Lambda^+ \cap \Lambda^-$, where  $ \Lambda^+,  \Lambda^-$, respectively, is the defining set of $R^+, R^-$, respectively, we set
	\begin{align*}
		R_t(\omega) \vcentcolon = -R^+_t(\omega)+R^-_t(\omega) = \myint{[0,t]}{L^{\varrho}f(X_s(\omega))\,}{\lambda(s)}{}
	\end{align*} 
	for $t \geq 0$. $R$  is then an additive functional and by \cite{Fukushima}, p. 244 an element of $\mathcal{N}_c$. It remains to show $R=R^{[f]}$. 
	Let $v \in D(\Erho)$. Applying \cite{Fukushima}, Theorem 5.1.3(vi) (p. 229) to $v^+$ and $v^-$ we have
	\begin{align*}
		\lim_{t \downarrow 0} \frac{1}{t}\myint{\R^d}{\Expec{x}{}{R_t^\pm}v \,\varrho}{\mu}{} = \myint{\R^d}{(-L^{\varrho}f)^\pm v \,\varrho}{\mu}{} 
	\end{align*}
	what yields
	\begin{align*}
		\lim_{t \downarrow 0} \frac{1}{t}\myint{\R^d}{\Expec{x}{}{R_t}v \,\varrho}{\mu}{} = - \Erho(f,v).
	\end{align*}
	By \cite{Fukushima}, Theorem 5.2.4 (p.251) this is equivalent to $R=R^{[f]}$.
\end{proof}

\begin{thm}\label{thm: Process is Bm}
	Assume the same conditions as in Theorem \ref{thm: Fukushima decomp}.
	For all $x \in \R^d \setminus N$, $N$ exceptional,  there exists a $d$-dimensional Brownian motion $B^x$ on a propably enlarged probability space $(\Omega, (\mathcal{F}_t)_{t \geq 0}, P_x)$ such that $P_x$-a.~s.
	\begin{align*}
		X_t^i - X_0^i = \sqrt{2} \myint{0}{\mathbbm{1}_{\R^d\setminus\SetA}(X_s)\,}{({B^x})_s^i}{t} + \myint{0}{\mathbbm{1}_{\R^d\setminus\SetA}(X_s)\partial_{x_i}(\ln \varrho(X_s))\,}{s}{t}
	\end{align*}
	for all $t \geq 0$ and for all $1 \leq i\leq d$. The last integrand has to be read as Lebesgue Integral when $\omega \in \Omega$ is fixed. We deviate from our usual notation to adapt to the common one.
\end{thm}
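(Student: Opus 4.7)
The strategy is to apply the Fukushima decomposition from Theorem \ref{thm: Fukushima decomp} to truncated coordinate functions and then to recognize the resulting continuous local martingales as stochastic integrals against a Brownian motion via a standard martingale representation theorem.

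First I would fix $i\in\{1,\ldots, d\}$ and choose cut-off functions $\chi_n \in \ContDiff{\R^d}{\infty}{c}$ with $\chi_n\equiv 1$ on $B_n(0)$ and $\supp{\chi_n}\subseteq B_{n+1}(0)$, and set $f_n^i(x)\vcentcolon= \chi_n(x)\, x_i$. Each $f_n^i$ lies in $\ContDiff{\R^d}{0}{c}\cap \ContDiff{\R^d\setminus\SetA}{2}{b}$, so Theorem \ref{thm: Fukushima decomp} applies. Define $\tau_n\vcentcolon=\inf\{s\ge 0\vcentcolon \eucnorm{X_s}\ge n\}$; these are $(\mathcal{F}_t)$-stopping times with $\tau_n\uparrow\infty$ $P_x$-a.s.\ for q.e.\ $x$, by continuity of sample paths (Theorem \ref{thm: Existence of processs}) together with conservativity (Proposition \ref{prop: E is recurrent}, applicable under Condition \ref{cond: rho integrable}). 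Take $n$ large enough that $\overline{\Lipschitz}\subseteq B_n(0)$; then on $B_n(0)$, $f_n^i$ coincides with the coordinate projection $x\mapsto x_i$, which is $C^\infty$ across $\partial\Lipschitz$. Consequently the two extensions $\extension{(\partial_{\nu_\Lipschitz} f_n^i)}{\Lipschitz}$ and $\extension{(\partial_{\nu_\Lipschitz} f_n^i)}{\LipschitzComplement}$ from Lemma \ref{lem: hplus and hminus} coincide, so the boundary term in the explicit generator formula from Theorem \ref{thm: Calculation Generator d greaterequal two} vanishes, leaving $L^{\varrho} f_n^i = \mathbbm{1}_{\R^d\setminus\SetA}\,\partial_{x_i}\ln\varrho$ on $B_n(0)$. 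Stopping the Fukushima decomposition for $f_n^i$ at $\tau_n$ yields, for q.e.\ starting point $x$ and $P_x$-a.s.,
\begin{align*}
X_{t\wedge\tau_n}^i - X_0^i = M_{t\wedge\tau_n}^{[f_n^i]} + \myint{0}{\mathbbm{1}_{\R^d\setminus\SetA}(X_s)\partial_{x_i}\ln\varrho(X_s)\,}{s}{t\wedge\tau_n}.
\end{align*}
Consistency of the MAFs on $[0,\tau_n]$ as $n$ grows---obtained by subtracting the decompositions for $n<m$ and noting that $f_n^i - f_m^i$ vanishes on $B_n(0)$---stitches these together into a single continuous local martingale $M^i$ satisfying
\begin{align*}
X_t^i - X_0^i = M_t^i + \myint{0}{\mathbbm{1}_{\R^d\setminus\SetA}(X_s)\partial_{x_i}\ln\varrho(X_s)\,}{s}{t}.
\end{align*}

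It remains to exhibit $M^i$ as $\sqrt{2}\int_0^{\cdot}\mathbbm{1}_{\R^d\setminus\SetA}(X_s)\,dB^{x,i}_s$ for some standard $d$-dimensional Brownian motion $B^x$ on a possibly enlarged probability space. Theorem \ref{thm: Fukushima decomp} combined with Proposition \ref{prop: energy measure} applied to $f_n^i$ yields $\langle M^i\rangle_t = 2\int_0^t\mathbbm{1}_{\R^d\setminus\SetA}(X_s)\,ds$. Applying the same two results to $f_n^i + f_n^j$ for $i\ne j$ and invoking the polarization identity
\begin{align*}
\langle M^i, M^j\rangle = \tfrac{1}{2}\!\left(\langle M^i + M^j\rangle - \langle M^i\rangle - \langle M^j\rangle\right),
\end{align*}
one obtains $\langle M^i, M^j\rangle\equiv 0$, since all off-diagonal partial derivatives of the coordinate projections vanish. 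A standard representation theorem for systems of continuous orthogonal local martingales with possibly degenerate diffusion coefficient, performed on an enlarged probability space to absorb the vanishing of the integrand on $\SetA$, then produces a $d$-dimensional standard Brownian motion $B^x$ such that $M^i_t = \sqrt{2}\int_0^t\mathbbm{1}_{\R^d\setminus\SetA}(X_s)\,dB^{x,i}_s$, which gives the claim.

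The main obstacle I anticipate is the verification that the boundary term in $L^{\varrho} f_n^i$ truly vanishes on $B_n(0)$: one has to check carefully, using the definitions of $\extension{\cdot}{\Lipschitz}$ and $\extension{\cdot}{\LipschitzComplement}$ from Lemma \ref{lem: hplus and hminus}, that the two one-sided normal-derivative extensions across $\partial\Lipschitz$ agree for the globally smooth function $f_n^i$. This is intuitively evident since $f_n^i\in \ContDiff{\R^d}{\infty}{c}$, but it must be read off carefully from the explicit representation of $L^{\varrho}$. The remainder is a by now standard combination of localization, consistency across stopping times, and the classical orthogonal-martingale representation result.
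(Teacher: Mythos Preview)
Your proposal is correct and follows essentially the same route as the paper: localize via truncated coordinate functions $f_n^i=\chi_n(x)x_i$ (the paper writes $\pi_i^k$), apply the Fukushima decomposition of Theorem \ref{thm: Fukushima decomp}, stop at the exit times $\tau_n$, observe that the boundary contribution in $L^{\varrho}f_n^i$ vanishes because $f_n^i$ is globally smooth, stitch the stopped martingales into a continuous local martingale, compute the cross brackets by polarization to obtain $\langle M^i,M^j\rangle_t=2\delta_{ij}\int_0^t\mathbbm{1}_{\R^d\setminus\SetA}(X_s)\,ds$, and invoke a martingale representation theorem on an enlarged space (the paper cites \cite{Kallenberg2001}, Theorem 18.12). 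Your anticipated obstacle about the boundary term is exactly the implicit step the paper also uses, and it is immediate from Lemma \ref{lem: hplus and hminus} since $\partial_{\nu_{\Lipschitz}}f_n^i$ extends continuously across $\partial\Lipschitz$.
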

\begin{proof}
	This proof follows the line of \cite{FGV2016}, Corollary 4.18 (p.751). 
	We fix $i \in \{1, \ldots d\}$. For $k \in \N$ define $\tau_k \vcentcolon = \inf\{t \geq 0 \vcentcolon X_t \notin \overline{B_k(0)}\}$ and a twice continuously differentiable function $\pi_i^k\vcentcolon \R^d \to \R$ with the property that 
	$\pi_i^k(x) = \begin{cases}
		x_i, &\text{ if }\eucnorm{x}< k\\
		0, &\text{ if }x \in \R^d \setminus B_{k+1}(0).
	\end{cases}$\\
	Then, $(\tau_k)_{k \in \N}$ defines a sequence of increasing stopping times with $\tau_k \uparrow \infty$ for $k \to \infty$ and $\pi_i^k \in C_b^2(\R^d)$ for all $k \in \N$. 
	We find  $\Lambda_k \in \mathcal{F}_{\infty}$ and an exceptional set $N_k$ such that $P_x(\Lambda_k)=1$ for all $x \in \R^d \setminus N_k$ and 
	\begin{align*}
		\pi_i^k(X_t(\omega)) - \pi_i^k(X_0(\omega))= M^{[\pi_i^k]}_t(\omega) + \myint{[0,t]}{L^{\varrho}\pi_i^k(X_s(\omega))\,}{\lambda(s)}{}
	\end{align*}
	as well as
	\begin{align*}
		\langle M^{[\pi_i^k]} \rangle_t(\omega) = 2 \sum_{j=1}^d\myint{[0,t]}{\mathbbm{1}_{\R^d\setminus \SetA}(X_s(\omega)) (\partial_{x_j} \pi_i^k(X_s(\omega)))^2\,}{\lambda(s)}{}
	\end{align*}
	for all $t\geq 0$ and for all $\omega \in \Lambda_k$. In particular, we have
	\begin{align}\label{eq: stopped process}
		\pi_i^k(X_{t \wedge \tau_k(\omega)}(\omega)) - \pi_i^k(X_0(\omega))= M^{[\pi_i^k]}_{t \wedge \tau_k(\omega)}(\omega) + \myint{[0,t \wedge \tau_k(\omega)]}{\partial_{x_i}(\ln(\varrho(X_s(\omega)))\,}{\lambda(s)}{}
	\end{align}
	for all $t \geq 0$ and all $\omega \in \Lambda_k$. Since $M^{[\pi_i^k]}$ is a martingale and $t \wedge \tau_k$ is a bounded stopping time for all $k \in \N$ and the filtration is right continuous, $M_{t \wedge \tau_k}^{[\pi_i^k]}$ is a martingale. We denote by $\Omega_c \vcentcolon = \{\omega \in \Omega \vcentcolon t \mapsto X_t(\omega)\text{ is continuous}\}$. We define for $\omega \in \Omega$ and $t \geq0$
	\begin{align*}
		M_t^{[\pi_i]}(\omega) = 
		\begin{cases}
			M_{t \wedge \tau_k(\omega)}^{[\pi_i^k]}(\omega) \text{ for }k\text{ such that }\tau_k(\omega)\geq t, &\text{if }\omega \in \cap_{k \in \N}\Lambda_k \cap \Omega_c \\
			0, &\text{ else.}
		\end{cases}
	\end{align*} 
	Then, $M^{[\pi_i]}$ is a continuous local martingale with $(\tau_k)_{k \in \N}$ as localizing sequence. 
	For $i,j \in \{1,\ldots, d\}$ the polarization identity yields
	\begin{align*}
		\langle M^{[\pi_i]}, M^{[\pi_j]}\rangle_t(\omega) = 2 \delta_{ij} \mathbbm{1}_{\cap_{k \in \N}\Lambda_k \cap \Omega_c}(\omega)\myint{[0,t]}{\mathbbm{1}_{\R^d\setminus\SetA}(X_s(\omega))\,}{\lambda^d(s)}{}.
	\end{align*}
	Finally, \cite{Kallenberg2001}, Theorem 18.12 (p. 358) states that for all $x \in \R^d\setminus \cup_{k \in \N}N_k$, there exists a $d$-dimensional Brownian motion $B^x=((B^x)_t)_{t \geq 0}$ with respect to a standard extension of $(\Omega, (\mathcal{F}_t)_{t \geq 0}, P_x)$ such that $P_x$-a.~s.
	$$M^{[\pi_i]}_t = \myint{0}{\sqrt{2}\mathbbm{1}_{\R^d\setminus \SetA}(X_s)\,}{(B^x)_s^i}{t}$$
	for all $t \geq 0$ and for all $1 \leq i\leq d$. 
	In particular, it holds $M^{[\pi_i]}_{t \wedge \tau_k} = M^{[\pi_i^k]}_{t \wedge \tau_k}\ P_x$-a.s.. Forming the limit in Equation \eqref{eq: stopped process} yields the claim.
\end{proof}

\begin{thm}\label{thm: Process is Bm in 1d}
	Assume the same conditions as in Theorem \ref{thm: Fuk decomp in 1d}.
	For all $x \in \R$,  there exists a one-dimensional Brownian motion $B^x$ on a propably enlarged probability space $(\Omega, (\mathcal{F}_t)_{t \geq 0}, P_x)$ such that $P_x$-a.~s.
	\begin{align*}
		X_t- X_0 = \sqrt{2} \myint{0}{\mathbbm{1}_{\R\setminus\SetA}(X_s)\,}{({B^x})_s^i}{t} + \myint{0}{\mathbbm{1}_{\R\setminus\SetA}(X_s)(\ln \varrho)'(X_s)\,}{s}{t}
	\end{align*}
	for all $t \geq 0$. The last integrand has to be read as Lebesgue Integral when $\omega \in \Omega$ is fixed. We deviate from our usual notation to adapt to the common one.
\end{thm}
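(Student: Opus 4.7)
The plan is to mimic the proof of Theorem \ref{thm: Process is Bm}, but now exploit that in dimension one the Fukushima decomposition in Theorem \ref{thm: Fuk decomp in 1d} is available in the \emph{strict} sense, i.e.\ with empty exceptional set. This is ultimately due to the $\mathscr{L}^2$-strong Feller property of $(p_t)_{t\ge 0}$ recorded in Lemma \ref{lem: Existence of Process in all points 1 dim} and guarantees that the localization/representation argument can be started from an arbitrary initial point $x\in\R$.

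First, for each $k\in\N$ I fix a function $\pi^k\in \ContDiff{\R}{2}{b}\subseteq \ContDiff{\R}{0}{c}(\R)\cap\ContDiff{\R\setminus\SetA}{2}{b}$ with $\pi^k(y)=y$ for $\abs{y}<k$ and $\pi^k(y)=0$ for $\abs{y}>k+1$, together with the hitting times $\tau_k\vcentcolon=\inf\{t\ge 0\vcentcolon \abs{X_t}\ge k\}$. Since $\pi^k$ is globally $C^2$, the one-sided derivatives $\extension{((\pi^k)')}{r}$ and $\extension{((\pi^k)')}{l}$ coincide, so Theorem \ref{thm: Calculation Generator d is one} gives
\begin{align*}
L^{\varrho}\pi^k(y)=\mathbbm{1}_{\R\setminus\SetA}(y)\bigl((\pi^k)''(y)+(\pi^k)'(y)(\ln\varrho)'(y)\bigr),
\end{align*}
which on $\{\abs{y}<k\}$ reduces to $\mathbbm{1}_{\R\setminus\SetA}(y)(\ln\varrho)'(y)$, while $(\pi^k)'(y)=1$ there. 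Applying Theorem \ref{thm: Fuk decomp in 1d} to $\pi^k$, I obtain a common defining set $\Lambda_k\in\mathcal{F}_{\infty}$ with $P_x(\Lambda_k)=1$ for all $x\in\R$ such that on $\Lambda_k$
\begin{align*}
\pi^k(X_t)-\pi^k(X_0)=M^{[\pi^k]}_t+\myint{[0,t]}{L^{\varrho}\pi^k(X_s)\,}{\lambda(s)}{},\qquad t\ge 0,
\end{align*}
with $\langle M^{[\pi^k]}\rangle_t=2\myint{[0,t]}{\mathbbm{1}_{\R\setminus\SetA}(X_s)((\pi^k)'(X_s))^2\,}{\lambda(s)}{}$.

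Stopping the above identity at $\tau_k$ and using continuity of the paths, I paste the stopped MAFs together on $\bigcap_{k\in\N}\Lambda_k\cap\Omega_c$ (where $\Omega_c$ is the set of continuous paths) by setting $M^{[\pi]}_t\vcentcolon= M^{[\pi^k]}_{t\wedge\tau_k}$ for any $k$ with $\tau_k\ge t$, and $0$ off this set. The resulting $M^{[\pi]}$ is a continuous local martingale with localizing sequence $(\tau_k)_{k\in\N}$ and with quadratic variation
\begin{align*}
\langle M^{[\pi]}\rangle_t=2\myint{[0,t]}{\mathbbm{1}_{\R\setminus\SetA}(X_s)\,}{\lambda(s)}{}.
\end{align*}
Invoking Kallenberg \cite{Kallenberg2001}, Theorem 18.12, applied to this one-dimensional local martingale, I obtain for each $x\in\R$ a standard Brownian motion $B^x$ on a suitable enlargement of $(\Omega,(\mathcal{F}_t)_{t\ge 0},P_x)$ such that $M^{[\pi]}_t=\sqrt{2}\myint{0}{\mathbbm{1}_{\R\setminus\SetA}(X_s)\,}{(B^x)_s}{t}$ $P_x$-a.s.\ for all $t\ge 0$. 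Passing to the limit $k\to\infty$ in the stopped version of the identity (noting that $\tau_k\uparrow\infty$ by continuity of paths and that on $\{s<\tau_k\}$ the integrand in the drift equals $\mathbbm{1}_{\R\setminus\SetA}(X_s)(\ln\varrho)'(X_s)$) yields the claim.

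The main subtlety lies not in the analytic manipulations, which are essentially parallel to the higher-dimensional case, but in carefully tracking that every step remains valid for \emph{every} starting point $x\in\R$. This is ensured by the strict-sense Fukushima decomposition (Theorem \ref{thm: Fuk decomp in 1d}), which in turn rests on the absolute continuity condition derived from the $\mathscr{L}^2$-strong Feller property of Lemma \ref{lem: Existence of Process in all points 1 dim}, together with the fact that the stopping times $\tau_k$ are $P_x$-a.s.\ finite approximations tending to infinity, so that the localizing argument does not sacrifice any initial points.
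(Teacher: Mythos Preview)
Your proposal is correct and follows essentially the same approach as the paper: the paper's own proof simply states that one repeats the argument of Theorem \ref{thm: Process is Bm} and observes that the strict-sense Fukushima decomposition from Theorem \ref{thm: Fuk decomp in 1d} eliminates the exceptional set, which is exactly what you have spelled out in detail. Your additional remark that the boundary term in $L^{\varrho}\pi^k$ vanishes because $\pi^k\in\ContDiff{\R}{2}{}$ makes the one-sided derivatives agree is a helpful clarification not made explicit in the paper.
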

\begin{proof}
	The proof works as the proof of Theorem \ref{thm: Process is Bm}. Note that since we have strict additive funcitonals in Theorem \ref{thm: Fuk decomp in 1d}, we do not have to worry about exceptional sets and obtain the result for all $x \in \R$.
\end{proof}

\begin{rem}\label{rem: Bass process compared to our process}
	The theorems above show that the process $\Markov{\varrho}$ is a distorted Brownian motion outside of $\SetA$. 
	Furthermore, Theorem \ref{thm: Process is Bm in 1d} yields for the choice $A=\{0\}, \mS = \delta_0$ and $\varrho=\mathbbm{1}$ that $\Markov{\varrho}$ solves the SDE
	\begin{align*}
		d X_t = \sqrt{2}\mathbbm{1}_{\R\setminus\{0\}}(X_s)dB_t,\ X_0 =x,
	\end{align*}
	for $x \in \R$, where $(B_t)_{t \geq 0}$ is a one-dimensional standard Brownian motion. The proof of Theorem \ref{thm: Process is Bm in 1d} also shows that $(X_t)_{t \geq 0}$ is a continuous local martingale. Since the process has the sum of the Lebesgue measure and the Dirac measure at $0$ as speed measure, the uniqueness result in \cite{B2014}, Theorem 4.1 (p.6) implies that after multiplying the Dirichlet form with factor $\frac{1}{2}$ and repeating all arguments, our process and his solution coincide. \cite{B2014}, Remark 5.3 (p.12) is a sketch of a proof that his solution and the solution to the system of equations
	\begin{align*}\left\{
		\begin{array}{ll}
			dX_t &= \mathbbm{1}_{\{X_t\not=0\}}\, dB_t\\
			\mathbbm{1}_{\{X_t=0\}}\,dt &= \frac{1}{\mu}\, d\ell_t^0(X)
		\end{array}\right.,
	\end{align*}
	where $\mu\in(0,+\infty)$, $X:=\big(X_t\big)_{t\ge 0}$ and $\ell_t^0(X)$ is the local time in the semimartingale sense at $0$ of $X$,
	constructed in \cite{EP2014}, coincide. In particular this shows that in one dimension, using this result, that our process coincides with a process constructed by a time change of a Brownian motion.
\end{rem}

\subsection{Séjour Time and permeable behaviour}
In this section, we prove that the process $\Markov{\varrho}$ given in Theorem \ref{thm: Existence of processs}, which is associated to $(\Erho, D(\Erho))$ given as in \eqref{defn: Erho}, has positive séjour time (or sticky behaviour) on $\SetA$, \ie $\lim_{t \to \infty}\frac{1}{t}\myint{[0,t]}{\mathbbm{1}_{\SetA}(X_s)\,}{\lambda(s)}{}>0$ and that $\SetA$ is permeable if the measure $\murho$ is finite. To start with, we show that $(\Erho, D(\Erho))$ is irreducible.
\begin{prop}\label{prop: E is irreducible}
	Let $(\Erho, D(\Erho))$ be given as in \eqref{defn: Erho}. Assume that Condition \ref{cond: rho integrable} holds. Additionally, assume that Condition \ref{cond: d is one} or \ref{cond: Lipschitz boundary} is satisfied. Then $(\Erho, D(\Erho))$ is irreducible.
\end{prop}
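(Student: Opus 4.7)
The plan is to establish irreducibility via the standard characterization (see e.g., \cite{Fukushima}, Theorem 1.6.1): $(\Erho, D(\Erho))$ is irreducible if and only if every $f \in D(\Erho)$ with $\Erho(f,f)=0$ is constant $\murho$-a.e. So it suffices to show that $\Erho(f,f)=0$ forces $f=c$ $\murho$-a.e.~for some $c \in \R$.

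First I would show that $f$ is $\lambda^d$-a.e.~equal to a constant. By Proposition \ref{prop: Domain is subset of Sob Space}, the $\lambda^d$-class of $f$ lies in $\localSobolev{1,2}{\R^d}$ with weak gradient in $L^2(\R^d, \measdens{\lambda^d}{\varrho})$ satisfying $\sum_{i=1}^{d}\myint{\R^d}{(f^{(i)})^2\,\varrho}{\lambda^d}{} = \Erho(f,f) = 0$, so all weak derivatives vanish $\lambda^d$-a.e.~and, since $\R^d$ is connected, $f=c$ $\lambda^d$-a.e.~for some constant $c$. Next I would show that under Condition \ref{cond: rho integrable} the constant function $\mathbbm{1}$ belongs to $D(\Erho)$: the cut-off sequence $(\varphi_n)$ from the proof of Proposition \ref{prop: E is recurrent} satisfies $\Erho(\varphi_n,\varphi_n) \to 0$, and since $\murho(\R^d) < \infty$ with $\abs{\varphi_n} \leq 1$ and $\varphi_n \to 1$ pointwise, dominated convergence yields $\varphi_n \to \mathbbm{1}$ in $\Ltwomurho$, hence $\mathbbm{1} \in D(\Erho)$. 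Consequently $g \vcentcolon= f - c\,\mathbbm{1}$ lies in $D(\Erho)$ with $\Erho(g,g) = 0$ and $g = 0$ $\lambda^d$-a.e.

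The main obstacle is to upgrade $g = 0$ $\lambda^d$-a.e.~to $g = 0$ $\mS$-a.e.~on $\SetA$, because $\SetA$ is a $\lambda^d$-null set and the $\mS$-values of $g$ are not pinned down by its $\lambda^d$-class. I would fix a sequence $(g_n) \subseteq D$ with $g_n \to g$ in the $\Erho_1$-norm, yielding $g_n \to 0$ and $\nabla g_n \to 0$ in $L^2(\R^d, \measdens{\lambda^d}{\varrho})$ together with $g_n \to g$ in $L^2(\SetA, \measdens{\mS}{\varrho})$. Since $\varrho$ is continuous and strictly positive, $(g_n)$ converges to $0$ in $\Sobolev{1,2}{V}$ for every bounded open $V \subseteq \R^d$, and the task becomes transferring this $H^{1,2}$-convergence into convergence on the trace sitting on $\SetA$.

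In the one-dimensional case (Condition \ref{cond: d is one}) I would use the Sobolev embedding $\Sobolev{1,2}{V} \hookrightarrow C(\overline{V})$ to get uniform convergence $g_n \to 0$ on compact sets, hence $g_n(x_k) \to 0$ for every $x_k \in \SetA$; combined with a subsequence of $(g_n)$ that converges $\mS$-a.e.~to $g$ (extracted from the $L^2(\SetA, \measdens{\mS}{\varrho})$-convergence) this forces $g(x_k) = 0$ for every $x_k$. In the higher-dimensional case (Condition \ref{cond: Lipschitz boundary}), I would invoke the classical trace theorem for bounded Lipschitz domains, which provides a continuous map $\Sobolev{1,2}{V} \to L^2(\partial\Lipschitz, H^{d-1})$, $h \mapsto h|_{\partial\Lipschitz}$, for any bounded open $V \supseteq \overline{\Lipschitz}$; since $g_n \to 0$ in $\Sobolev{1,2}{V}$ and $\varrho$ is bounded away from $0$ and $\infty$ on the compact set $\partial\Lipschitz$, this gives $g_n \to 0$ in $L^2(\SetA, \measdens{\mS}{\varrho})$, which together with $g_n \to g$ in the same space forces $g = 0$ $\mS$-a.e. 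In either case $f = c$ $\murho$-a.e., establishing irreducibility.
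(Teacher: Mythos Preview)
Your proposal is correct and follows essentially the same route as the paper: reduce irreducibility to the criterion ``$\Erho(f,f)=0 \Rightarrow f$ constant $\murho$-a.e.'', use Proposition \ref{prop: Domain is subset of Sob Space} to get $f=c$ $\lambda^d$-a.e., show $\mathbbm{1}\in D(\Erho)$ via the cut-offs from Proposition \ref{prop: E is recurrent}, and then upgrade $g=f-c\mathbbm{1}=0$ from $\lambda^d$-a.e.\ to $\mS$-a.e.\ by Sobolev embedding (Condition \ref{cond: d is one}) or the trace theorem (Condition \ref{cond: Lipschitz boundary}). The only minor deviations are that the paper invokes \cite{FukushimaChen}, Theorem 2.1.11 (together with recurrence from Proposition \ref{prop: E is recurrent}) rather than \cite{Fukushima}, and in the one-dimensional case it exploits directly that the Dirac masses in $\mS$ make $L^2(\murho)$-convergence imply pointwise convergence at each $x_k\in\SetA$, so no subsequence extraction is needed.
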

\begin{proof}
	First let $d \in \N$ be arbitrary.
	Proposition \ref{prop: E is recurrent} shows that under Condition \ref{cond: rho integrable}, $(\Erho, D(\Erho))$ is recurrent. Condition \ref{cond: rho integrable} further assures $\murho(\R^d) < \infty$. The aim is to apply \cite{FukushimaChen}, Theorem 2.1.11 (p.46) to show the irreducibility of $(\Erho, D(\Erho))$.
	Let $f \in D(\Erho)$ with $\Erho(f,f)=0$. Then, $f^{(i)} =0\ \lambda^d$ \almosteverywhere on $\R^d$ for all $i=1,\ldots, d$ since $\varrho$ is positive. This implies that $f=c_f$ $\lambda^d$ \almosteverywhere on $\R^d$ for some $c_f \in \R$. It remains to check that $f=c_f$ $\mS$ \almosteverywhere on $\SetA$.  Under Condition \ref{cond: rho integrable} with the same arguments as in the proof of Proposition \ref{prop: Revuz correspondance and S00}(b), it holds that $\mathbbm{1} \in D(\Erho)$ and therefore, also $g \vcentcolon = f-c_f\mathbbm{1} \in D(\Erho) \subseteq \localSobolev{1,2}{\R^d}$.
	\par{}
	Assume that Condition \ref{cond: d is one} is satisfied. We find a sequence $(g_n)_{n \in \N} \subseteq D$ that converges to $g$ \withrespectto $(\Erho_1)^{\frac{1}{2}}$. In particular, we have
	\begin{align}\label{eq: E irreducible for $d=1$, Ltwomurho convergence}
		\norm{g_n-g}_{L^2(\R,\measdens{\mu}{\varrho})} \to 0\text{ as }n \to \infty.
	\end{align} 
	Choose $x_N \in \SetA$ arbitrary. For some $\varepsilon>0$, we have $B_{\varepsilon}(x_N) \subseteq (\R\setminus \SetA) \cup \{x_N\}$ and in particular $g_n, g \in \Sobolev{{1,2}}{B_{\varepsilon}(x_N)}$ with $\norm{g_n-g}_{\Sobolev{{1,2}}{B_{\varepsilon}(x_N)}}\to 0$. Since $\widetilde{g} \vcentcolon =0$ is the continuous representative of $g \in L^2(B_{\varepsilon}(x_N), \lambda)$, we also have $\sup_{x \in B_{\varepsilon}(x_N)} \abs{g_n(x)-\widetilde{g}(x)} \to 0$ as $n \to \infty$ by a Sobolev embedding theorem, see e.g.~\cite{AdamsSobSpaces}, Lemma 5.15 (p.107). This implies $g_n(x_N) \to 0$ as $n \to \infty$. On the other hand, $g_n(x_N) \to g(x_N)$ by Equation \eqref{eq: E irreducible for $d=1$, Ltwomurho convergence} and thus, $g(x_N)=0$. Therefore, since $x_N \in \SetA$ was arbitrary, $g=0\ \murho$ a.e..
	\par{}
	Assume that Condition \ref{cond: Lipschitz boundary} is satisfied.
	Since $\Lipschitz$ is bounded by assumption, $\restrictfunc{g}{\Lipschitz} \in \Sobolev{1,2}{\Lipschitz}$ with $\norm{\restrictfunc{g}{\Lipschitz}}_{\Sobolev{1,2}{\Lipschitz}}=0$. Using \cite{ALTLAEnglish}, A8.6 (p.268), we obtain the existence of an unique linear and continuous trace operator
	\begin{align*}
		T \vcentcolon \Sobolev{1,2}{\Lipschitz}\to L^2(\SetA, H^{d-1})
	\end{align*}
	with the property that $T u =u\big|_{\SetA}$ for all $u \in C^{\infty}(\overline{\Lipschitz})$. We conclude that
	\begin{align}\label{eq: irredcibility norm of T applied to g}
		\norm{T\left(g\big|_{\Lipschitz}\right)}_{L^2(\SetA, H^{d-1})} \leq \norm{T}\norm{g\big| _{\Lipschitz}}_{\Sobolev{1,2}{\Lipschitz}} =0.
	\end{align}
	Since $g \in D(\Erho)$ and $\varrho$ is strictly positive and continuous, there exists a sequence $(g_n)_{n \in \N} \subseteq D$ such that 
	\begin{align}
		\lim_{n \to \infty}\norm{\restrictfunc{g_n}{\Lipschitz}- \restrictfunc{g}{\Lipschitz}}_{L^2(\Lipschitz, \lambda^d)}= 0 \quad \text{ and }\quad
		\lim_{n \to \infty}\norm{\restrictfunc{g_n}{\SetA} - \restrictfunc{g}{\SetA}}_{L^2(\SetA, H^{d-1})}=0.\label{eq: irreducibility restriction of g on SetA}
	\end{align}
	Additionally, we have by Proposition \ref{prop: Domain is subset of Sob Space}
	\begin{align*}
		\lim_{n \to \infty}&\norm{\restrictfunc{\partial_i g_n}{\Lipschitz} - \restrictfunc{g^{(i)}}{\Lipschitz}}_{L^2(\Lipschitz, \lambda^d)} =0.
	\end{align*}
	This implies  $\lim_{n \to \infty}\norm{\restrictfunc{g_n}{\Lipschitz}- \restrictfunc{g}{\Lipschitz}}_{\Sobolev{1,2}{\Lipschitz}}= 0$ and thus 
	\begin{equation}\label{eq:  irreducibility t2 convergence on the boundary}
		\lim_{n \to \infty}\norm{T\left(\restrictfunc{g_n}{\Lipschitz}\right) - T \left(\restrictfunc{g}{\Lipschitz}\right)}_{L^2(\SetA, H^{d-1})}=0. 
	\end{equation}
	By construction, $(\restrictfunc{g_n}{\Lipschitz})_{n \in \N} \subseteq C^{\infty}(\overline{\Lipschitz})\cap \Sobolev{{1,2}}{U}$ and the definition of $T$ yields $T\left(\restrictfunc{g_n}{\Lipschitz}\right) = \restrictfunc{g_n}{\SetA}.$
	Plugging this in Equation \eqref{eq:  irreducibility t2 convergence on the boundary} we obtain
	\begin{equation}
		\lim_{n \to \infty}\norm{\restrictfunc{g_n}{\SetA} - T\left( \restrictfunc{g}{\Lipschitz}\right)}_{L^2(\SetA, H^{d-1})}=0
	\end{equation}
	and conclude together with Equation \eqref{eq: irreducibility restriction of g on SetA} and Equation \eqref{eq: irredcibility norm of T applied to g}  that $T \left(\restrictfunc{g}{\Lipschitz}\right) = \restrictfunc{g}{\SetA}=0$ in $L^2(\SetA, H^{d-1})$ and thus $f-c_f \mathbbm{1}=0$ $H^{d-1}$-\almosteverywhere on $\SetA$.
\end{proof}
\begin{thm}\label{thm: process is ergodic}
	Let $(\Erho, D(\Erho))$ be given as in \eqref{defn: Erho} and $\Markov{\varrho}$ be given as in Theorem \ref{thm: Existence of processs}. Assume that Condition \ref{cond: rho integrable} holds. Additionally, assume that Condition \ref{cond: d is one} or \ref{cond: Lipschitz boundary} is satisfied and let $f \in L^1(\R^d,\murho)$. It then holds
	\begin{align*}
		\lim_{t \to \infty}\frac{1}{t}\myint{[0,t]}{f(X_s)\,}{\lambda(s)}{} = \frac{\myint{\R^d}{f \, \varrho}{\mu}{}}{\murho(\R^d)}
	\end{align*}
	$P_x$-a.s.~for q.e.~$x \in \R^d$ if $d \geq 2$ and $P_x$-a.s.~for all $x \in \R$ if $d=1$.
\end{thm}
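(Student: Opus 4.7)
The plan is to invoke the classical individual ergodic theorem for $\murho$-symmetric Hunt processes associated with a recurrent, conservative, irreducible Dirichlet form carrying a finite invariant measure, and then to upgrade the resulting quasi-everywhere statement to a pointwise statement in dimension one.

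First I would verify that all structural hypotheses are at our disposal. Condition \ref{cond: rho integrable} yields $\murho(\R^d) = \myint{\R^d}{\varrho\,}{\mu}{} < \infty$, so $\murho$ is a finite invariant measure for $\Markov{\varrho}$. Proposition \ref{prop: E is recurrent} shows that $(\Erho, D(\Erho))$ is recurrent and hence conservative, which in turn (via Theorem \ref{thm: Existence of processs}) implies that $\Markov{\varrho}$ has infinite lifetime, so the time averages $\frac{1}{t}\myint{[0,t]}{f(X_s)}{\lambda(s)}{}$ are well defined for every $t>0$. Finally, Proposition \ref{prop: E is irreducible}, applied under Condition \ref{cond: d is one} for $d=1$ or Condition \ref{cond: Lipschitz boundary} for $d \geq 2$ together with Condition \ref{cond: rho integrable}, gives irreducibility of $(\Erho, D(\Erho))$.

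With these four properties in hand, I would apply an $L^1$-version of the individual ergodic theorem for symmetric Markov processes; a reference in the Dirichlet-form setting is the ergodic theorem in \cite{FukushimaChen}, Chapter 3, or the Hopf-type statement in \cite{Fukushima}. The theorem states that for an $\murho$-symmetric conservative Hunt process whose Dirichlet form is irreducible with $\murho(\R^d)<\infty$, the limit $\lim_{t \to \infty}\frac{1}{t}\myint{[0,t]}{f(X_s)\,}{\lambda(s)}{}$ exists $P_x$-a.s.\ for q.e.~$x \in \R^d$ for every $f \in L^1(\R^d,\murho)$ and is $T_t$-invariant; irreducibility forces any such invariant $L^1$-function to be $\murho$-a.e.\ equal to the constant $\frac{1}{\murho(\R^d)}\myint{\R^d}{f\,\varrho}{\mu}{}$, yielding the asserted identity. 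For bounded $f$ one could argue more directly by combining the von Neumann mean ergodic theorem (together with irreducibility, which identifies the $T_t$-invariant subspace of $L^2(\R^d,\murho)$ as the constants) with Rota's maximal inequality for symmetric Markov semigroups; the extension to $f \in L^1$ then proceeds by truncation $f_n \vcentcolon= (f \wedge n)\vee (-n)$ and a standard dominated-convergence/Hopf maximal argument.

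To remove the quasi-everywhere restriction in the one-dimensional case I would invoke Lemma \ref{lem: cap zero}(a): under Condition \ref{cond: d is one} every non-empty Borel subset of $\R$ has strictly positive $\Erho$-capacity. Hence the exceptional set appearing in the ergodic theorem must be empty, and the convergence holds $P_x$-a.s.~for every $x \in \R$. The main obstacle I expect is locating an ergodic theorem in the literature whose hypotheses match ours exactly; the delicate point is the passage from the elementary $L^2$-statement (which is immediate from irreducibility and spectral theory) to the pointwise $L^1$-statement, which requires either a Hopf maximal inequality for $(T_t)_{t \geq 0}$ or the Chacon--Ornstein theorem applied to the additive functional framework developed in Section 5.
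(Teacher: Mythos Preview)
Your proposal is correct and follows essentially the same route as the paper: verify recurrence (Proposition \ref{prop: E is recurrent}) and irreducibility (Proposition \ref{prop: E is irreducible}), invoke the ergodic theorem for recurrent irreducible symmetric Markov processes, and then use Lemma \ref{lem: cap zero}(a) to eliminate the exceptional set when $d=1$. The paper simply cites \cite{Fukushima}, Theorem 4.7.3 (p.~205) as the off-the-shelf ergodic result, so your worry about locating the right reference is unfounded and the alternative von Neumann/Rota argument, while correct, is unnecessary.
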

\begin{proof}
	Under these assumptions, $(\Erho, D(\Erho))$ is recurrent by Proposition \ref{prop: E is recurrent} and irreducible by Proposition \ref{prop: E is irreducible}. The claim then follows from \cite{Fukushima}, Theorem 4.7.3 (p.205). If $d=1$, then Lemma \ref{lem: cap zero}(a) implies that only the empty set has Capacity zero and therefore, the statement holds for all $x \in \R$.
\end{proof}

\begin{cor}\label{cor: positive sejour time}
	The choice of $f=\mathbbm{1}_{\SetA}$ in Theorem \ref{thm: process is ergodic} now yields the sticky behaviour of $\Markov{\varrho}$. 
\end{cor}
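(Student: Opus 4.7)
The plan is a direct application of Theorem \ref{thm: process is ergodic} to the test function $f = \mathbbm{1}_{\SetA}$. The séjour time average $\frac{1}{t}\myint{[0,t]}{\mathbbm{1}_{\SetA}(X_s)\,}{\lambda(s)}{}$ is precisely the ergodic mean appearing in that theorem, so the claim reduces to checking the integrability hypothesis and identifying the resulting limit as strictly positive.

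First I would verify that $\mathbbm{1}_{\SetA} \in L^1(\R^d, \murho)$: since $\SetA$ is a $\lambda^d$-null set,
\begin{align*}
\myint{\R^d}{\mathbbm{1}_{\SetA} \,\varrho}{\mu}{} = \myint{\SetA}{\varrho\,}{\mS}{} \leq \myint{\R^d}{\varrho\,}{\mu}{} < \infty
\end{align*}
by Condition \ref{cond: rho integrable}. Plugging $f = \mathbbm{1}_{\SetA}$ into Theorem \ref{thm: process is ergodic} then yields, for the appropriate set of starting points $x$,
\begin{align*}
\lim_{t \to \infty} \frac{1}{t} \myint{[0,t]}{\mathbbm{1}_{\SetA}(X_s)\,}{\lambda(s)}{} = \frac{\myint{\SetA}{\varrho\,}{\mS}{}}{\murho(\R^d)}\quad P_x\text{-a.s.}
\end{align*}

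The remaining step is to identify this limit as strictly positive. The denominator is finite and positive by Condition \ref{cond: rho integrable} together with the positivity of $\varrho$. For the numerator, since $\varrho>0$ pointwise, strict positivity reduces to $\mS(\SetA)>0$: under Condition \ref{cond: d is one}, $\mS = \sum_{n\in\N}\delta_{x_n}$ gives $\mS(\SetA)\geq 1$ whenever $\SetA\neq\emptyset$, and under Condition \ref{cond: Lipschitz boundary}, $\SetA = \partial \Lipschitz$ carries positive $(d-1)$-dimensional Hausdorff measure by its Lipschitz parametrisation. There is no genuine obstacle here; the corollary is an immediate consequence of ergodicity, and the only content is the observation that although $\SetA$ is $\lambda^d$-negligible, the singular part $\mS$ charges it.
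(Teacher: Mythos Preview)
Your proposal is correct and matches the paper's approach exactly: the paper gives no formal proof for this corollary, merely stating that the choice $f=\mathbbm{1}_{\SetA}$ in the ergodic theorem yields the claim, and remarking afterwards that the positive s\'ejour time comes from $\SetA$ not being a $\murho$-null set. Your argument simply spells out these two observations in detail.
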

Note that the positive séjour time of the process on $\SetA$ comes from the fact that $\SetA$ is not a $\measdens{\mu}{\varrho}$ null set. 

We finally show the permeability of $\SetA$.
\begin{cor}\label{cor: permeable}
	Let $(\Erho, D(\Erho)$ be given as in \eqref{defn: Erho} and $\Markov{\varrho}$ be given as in Theorem \ref{thm: Existence of processs} and assume that Condition \ref{cond: rho integrable} holds.
	\begin{enumerate}[(i)]
		\item Let additionally Condition \ref{cond: d is one} be satisfied. For any $x_k \in \SetA$, let $\varepsilon_{x_k}>0$ be such that $(x_k-\varepsilon_{x_k}, x_k+\varepsilon_{x_k})\setminus\{x_k\}\cap \SetA=\emptyset$. Then for all $x \in \R$
		\begin{align*}
			&P_x\left(\inf\{t \geq 0 \vcentcolon X_{t+n} \in (x_k-\varepsilon_{x_k}, x_k)\}< \infty \text{ for all }n \geq 1\right)=1 \text{ and }\\
			&P_x\left(\inf\{t \geq 0 \vcentcolon X_{t+n} \in (x_k, x_k+\varepsilon_{x_k})\}< \infty \text{ for all }n \geq 1\right)=1 .
		\end{align*}
		\item If Condition \ref{cond: Lipschitz boundary} is satisfied, then for q.e.~$x \in \R^d$
		\begin{align*}
			&P_x\left(\inf\{t \geq 0 \vcentcolon X_{t+n} \in U\}< \infty \text{ for all }n \geq 1\right)=1 \text{ and }\\
			&P_x\left(\inf\left\{t \geq 0 \vcentcolon X_{t+n} \in \overline{U}^C\right\}< \infty \text{ for all }n \geq 1\right)=1 .
		\end{align*}
	\end{enumerate}
\end{cor}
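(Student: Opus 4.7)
My plan is to derive both statements as immediate corollaries of the ergodic Theorem~\ref{thm: process is ergodic}. The underlying principle is that for any Borel set $V \subseteq \R^d$ with $\murho(V) > 0$, the ergodic limit forces $\int_0^\infty \mathbbm{1}_V(X_s)\, d\lambda(s) = \infty$ almost surely, and hence the trajectory must visit $V$ at arbitrarily large times; the statements in (i) and (ii) are precisely of this visit-type, with suitable choices of $V$.

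For part~(i), fix $x_k \in \SetA$ and let $\varepsilon_{x_k}>0$ be as in the statement. Set $V \vcentcolon = (x_k - \varepsilon_{x_k}, x_k)$; by the choice of $\varepsilon_{x_k}$ we have $V \cap \SetA = \emptyset$, and since $V$ is non-empty, open, and $\varrho > 0$ is continuous, $\murho(V) = \int_V \varrho \, d\lambda > 0$. Condition~\ref{cond: rho integrable} gives $\murho(\R) < \infty$, so $\mathbbm{1}_V \in L^1(\R, \murho)$. In dimension one, Theorem~\ref{thm: process is ergodic} yields, for every $x \in \R$, that $P_x$-a.s.\
\begin{equation*}
\lim_{t \to \infty} \frac{1}{t}\int_0^t \mathbbm{1}_V(X_s)\, d\lambda(s) = \frac{\murho(V)}{\murho(\R)} > 0,
\end{equation*}
so $\int_0^\infty \mathbbm{1}_V(X_s)\, d\lambda(s) = \infty$, $P_x$-a.s. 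For each fixed $n \in \N$, subtracting the contribution of $[0,n]$ (which is at most $n$) leaves $\int_n^\infty \mathbbm{1}_V(X_s)\, d\lambda(s) = \infty$, so $\{s \geq n \vcentcolon X_s \in V\}$ has infinite Lebesgue measure and in particular is non-empty; equivalently, $\inf\{t \geq 0 \vcentcolon X_{t+n} \in V\} < \infty$. This holds $P_x$-a.s.\ for each $n$, and the quantifier ``for all $n \geq 1$'' amounts to a countable intersection of full-probability events. The second identity of~(i) is obtained identically with $V$ replaced by $(x_k, x_k + \varepsilon_{x_k})$.

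Part~(ii) follows by the same argument, applied once to $f = \mathbbm{1}_{\Lipschitz}$ and once to $f = \mathbbm{1}_{\LipschitzComplement}$. Both sets are non-empty and open in $\R^d$ ($\LipschitzComplement$ because $\Lipschitz$ is bounded), and both are disjoint from $\SetA = \partial \Lipschitz$, so the continuity and positivity of $\varrho$ yield $\murho(\Lipschitz), \murho(\LipschitzComplement) > 0$. For $d \geq 2$, Theorem~\ref{thm: process is ergodic} supplies the ergodic conclusion only for quasi-every starting point, which is exactly why the visit statements in~(ii) are formulated q.e.\ rather than for every $x$.

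The argument is essentially routine once the ergodic Theorem~\ref{thm: process is ergodic} is in hand, so I expect no real obstacle. The substantive content has already been invested in recurrence (Proposition~\ref{prop: E is recurrent}) and irreducibility (Proposition~\ref{prop: E is irreducible}), which combine via \cite{Fukushima}, Theorem 4.7.3 to give the ergodic theorem. The only remaining step -- infinite occupation on $[0,\infty)$ implies infinite occupation on $[n,\infty)$, and hence non-emptiness of $\{s \geq n \vcentcolon X_s \in V\}$ -- is immediate from monotonicity of the integral.
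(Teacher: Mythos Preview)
Your proof is correct but takes a different route from the paper. The paper invokes recurrence (Proposition~\ref{prop: E is recurrent}) and irreducibility (Proposition~\ref{prop: E is irreducible}) directly and then cites \cite{FukushimaChen}, Theorem~3.5.6(ii), a general result asserting that an irreducible recurrent symmetric Hunt process hits every set of positive measure at arbitrarily large times for q.e.\ starting point; Lemma~\ref{lem: cap zero}(a) then upgrades ``q.e.'' to ``all $x$'' in the one-dimensional case. You instead pass through the ergodic Theorem~\ref{thm: process is ergodic} (itself a consequence of recurrence and irreducibility) and extract the infinite-visit statement from the positivity of the time-averaged occupation limit. Your approach has the advantage of being self-contained within the results already established in the paper, requiring no further external citation; the paper's approach is marginally more direct, since the cited theorem gives the hitting-time statement immediately. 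A minor stylistic point: on the single full-probability event where the ergodic limit holds, $\int_n^\infty \mathbbm{1}_V(X_s)\,d\lambda(s)=\infty$ already holds for \emph{all} $n$ simultaneously, so the countable-intersection step is not actually needed.
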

\begin{proof}
	Proposition \ref{prop: E is recurrent} shows that $(\Erho, D(\Erho))$ is recurrent and Proposition \ref{prop: E is irreducible} states that $(\Erho, D(\Erho))$ is irreducible. The statement then follows directly from \cite{FukushimaChen}, Theorem 3.5.6(ii) (p.123), again using Lemma \ref{lem: cap zero}(a) to obtain the statement for all $x \in \R$ in part (a).
\end{proof}
\begin{rem}
	Corollary \ref{cor: permeable} implies that for q.e.~$x \in \R^d$ (if $d\geq 2$) and for all $x \in \R$ (if $d=1$), the set of paths crossing $\SetA$ infinitely often has probability one. This shows that  $\SetA$ is permeable for the process.
\end{rem}

\printbibliography
\end{document}